\title{Products of real equivariant weight filtrations}
\author{Fabien Priziac}
\date{}
\newtheorem{de}{Definition}[section]
\newtheorem{theo}[de]{Theorem}
\newtheorem{prop}[de]{Proposition}
\newtheorem{defprop}[de]{Definition and Proposition}
\newtheorem{lem}[de]{Lemma}
\newcommand{\Hom}{\text{Hom}}
\theoremstyle{remark}
\newtheorem{rem}[de]{Remark}
\newtheorem{ex}[de]{Example}
\begin{document}

\maketitle

\begin{abstract}
We first show the existence of a weight filtration on the equivariant cohomology of real algebraic varieties equipped with the action of a finite group, by applying group cohomology to the dual geometric filtration. We then prove the compatibility of the equivariant weight filtrations and spectral sequences with K\"unneth isomorphism, cup and cap products, from the filtered chain level. We finally induce the usual formulae for the equivariant cup and cap products from their analogs on the non-equivariant weight spectral sequences. 
\end{abstract}
\footnote{Keyword : real algebraic varieties, group action, equivariant cohomology and homology, weight filtrations, spectral sequences, K\"unneth isomorphism, cup and cap products.
\\
{\it 2010 Mathematics Subject Classification :} 14P25, 14P10, 57S17, 57S25, 55U25}

\section{Introduction}

In \cite{Del}, P. Deligne established the existence of a filtration on the rational cohomology with compact supports of complex algebraic varieties, which is trivial on compact nonsingular varieties, additive and compatible with resolutions of singularities : the weight filtration. Using F. Guill\'en and V. Navarro Aznar's work on cubical hyperresolutions (\cite{GNAPP}, \cite{GNA}), B. Totaro introduced in \cite{Tot} an analog of the weight filtration for real algebraic varieties, defined on their cohomology with compact supports and Borel-Moore homology with coefficients in $\mathbb{Z}_2 := \mathbb{Z}/2 \mathbb{Z}$. Unlike the complex case, the spectral sequence associated to the real weight filtration does not degenerate at page two in general. Moreover, it contains important additive invariants of real algebraic varieties : the virtual Betti numbers (\cite{MCP-VB}). In \cite{MCP}, C. McCrory and A. Parusi\'nski proved that the homological real weight spectral sequence and filtration can be induced by a filtered chain complex, the geometric filtration, defined on semialgebraic chains with closed supports. The geometric filtration is itself additive, compatible with resolutions of singularities and furthermore functorial with respect to continuous proper maps with $\mathcal{AS}$-graph (\cite{Kur}, \cite{KP}). In particular, this last fact allows to show that the virtual Betti numbers are invariant under homeomorphisms with $\mathcal{AS}$-graph.

The cohomological counterpart of McCrory and Parusi\'nki's work is tackled in \cite{LP} : the cohomological real weight spectral sequence and filtration can be induced by a dualization of the geometric filtration. This is used to prove in particular that the cohomological and homological real weight spectral sequences and filtrations are dual to one another. The second part of \cite{LP} deals with the issue of the compatibility of the geometric and dual geometric filtrations and the induced weight spectral sequences and filtrations with products. Morphisms defined on the filtered chain level induce K\"unneth isomorphisms as well as cup and cap products on the weight spectral sequences. In particular, obstructions for Poincar\'e duality to be an isomorphism are extracted.

Let us now consider real algebraic varieties equipped with the action of a finite group. We can equip the geometric filtration with the action induced by functoriality (\cite{Pri-CA}). Furthermore, applying to this ``geometric filtration with action'' a functor which computes group homology with values in a chain complex, we can obtain a new chain complex which induces an analog of the weight filtration on the equivariant homology of real algebraic varieties with action defined in \cite{VH}~: the homological equivariant weight filtration (\cite{Pri-EWF}). Significative differences appear between the associated equivariant weight spectral sequence and the non-equivariant one. In particular, it is not left-bounded and, even for compact nonsingular varieties, it does not degenerate at page two in general. Consequently, we can not recover additive invariants directly from the equivariant weight spectral sequence.

In this paper, we first define and study the cohomological equivariant version of the real weight filtration. The dual geometric filtration of a real algebraic variety with action can be equipped with the action induced by functoriality (Definition \ref{defdugeofilact}). We can then apply to this ``dual geometric filtration with action'' a functor which computes group cohomology with values in a cochain complex (Definition and Proposition \ref{defproplcohom}) in order to induce a weight filtration on the equivariant cohomology of real algebraic varieties with action. In addition, similarly to what was done in \cite{MCP}, \cite{LP}, \cite{Pri-CA} and \cite{Pri-EWF}, we show that the dual geometric filtration with action and the induced cohomological equivariant geometric filtration are unique up to filtered quasi-isomorphisms with additivity, acyclicity and triviality properties (Theorem \ref{wcactcohom} and Proposition \ref{dgcwc}, Theorem \ref{unicoequivwc} and Proposition \ref{propcoequivgeowc}), using a version with action (\cite{Pri-CA}) of an extension criterion of Guill\'en and Navarro Aznar (\cite{GNA}). We also remark that, contrary to the non-equivariant ones (\cite{LP}), the cohomological and homological equivariant weight spectral sequences and filtrations are not dual to one another in general (Example \ref{excircle1}).

In a second part, we use the work on products of \cite{LP} to induce a K\"unneth isomorphism (Theorems \ref{thequivhomprod} and \ref{thequivcohomprod}), a cup product (Theorem \ref{theoequivcupprod}) and a cap product (Theorem \ref{theoequivcupprod}) on the equivariant weight spectral sequences, via morphisms defined on the filtered chain level. Precisely, we first check that the filtered chain morphisms used in \cite{LP} to define the products on the weight spectral sequences are equivariant, so that we can apply the functor computing group (co)homology. We then use the facts that, when the considered groups are of finite order, this last functor is itself compatible with products (Propositions \ref{isolprod} and \ref{prodlhom}), and that it is also functorial with respect to group morphisms (Proposition \ref{functgroup}). An important part of the study consists in proving the usual properties of cup and cap products for the equivariant cup and cap products on the equivariant weight spectral sequences (Theorems \ref{comassoccupg}, \ref{comassoc2cupg} and \ref{functcupg}, Theorems \ref{prop1equivcap} and \ref{prop2equivcap}). To this end, we consider further spectral sequences (induced by group (co)homology) which converges to the page two of the equivariant weight spectral sequences and which allow to carry the properties of the non-equivariant weight spectral sequences.

We begin this paper by defining the dual geometric filtration with action of real algebraic varieties equipped with a finite group action, by functoriality. We then prove its uniqueness up to equivariant filtered quasi-isomorphism with triviality, acyclicity and additivity properties.

In section \ref{seccohomequivwf}, we define a functor which computes group cohomology with coefficients in a cochain complex and equivariant cohomology of real algebraic varieties with action. We apply this functor to the dual geometric filtration with action and study the resulting filtered cochain complex, notably the associated spectral sequences and the induced cohomological equivariant weight filtration on the equivariant cohomology of real algebraic varieties with action.

In section \ref{sectprod}, we induce equivariant K\"unneth isomorphisms (subsection \ref{subsectku}), cup products (subsection \ref{subsectcup}) and cap products (subsection \ref{subsectcap}) on the equivariant weight spectral sequences, as well as these products' usual properties, from the products on the weight spectral sequences with action. We also use the compatibility of group cohomology with products as well as the functoriality of group cohomology with respect to homomorphisms of groups.

\section{Cohomological weight complex with action} \label{cohcompl_act}

Let $G$ be a finite group.

As in \cite{Pri-CA}, we use the fonctoriality of the cohomological weight complex of \cite{LP} in order to define a cohomological weight complex with action of $G$ on the category of real algebraic $G$-varieties. Its uniqueness up to filtered quasi-isomorphism will be given by Th\'eor\`eme 3.5 of \cite{Pri-CA}, which is a version with action of the extension criterion of F. Guill\'en and V. Navarro Aznar (\cite{GNA}). 

First, we make precise the framework and notations (inspired by the ones in \cite{GNA}) we are going to work with throughout this paper. In this article, by a real algebraic variety, we mean a reduced separated scheme of finite type over $\mathbb{R}$, and by an action of $G$ on a real algebraic variety $X$, we mean an an action by isomorphisms of schemes such that the orbit of any point in X is contained in an affine open subscheme.

\begin{de} We denote by 
\begin{itemize}
	\item $\mathbf{Sch}_c^G(\mathbb{R})$ the category of real algebraic $G$-varieties -that is, by definition, real algebraic varieties equipped with an action of $G$- and equivariant regular proper morphisms,
	\item $\mathbf{Reg}_{comp}^G(\mathbb{R})$ the full subcategory of compact nonsingular real algebraic $G$-varieties,
	\item $\mathbf{V}^G(\mathbb{R})$ the full subcategory of projective nonsingular real algebraic $G$-varieties.
\end{itemize}

We also denote by
\begin{itemize}
	\item $\mathfrak{C}^G$ the category of filtered bounded cochain $G$-complexes of $\mathbb{Z}_2$-vector spaces -that is, by definition, bounded cochain $G$-complexes of $\mathbb{Z}_2$-vector spaces equipped with a decreasing bounded filtration by cochain $G$-complexes with equivariant inclusions- and equivariant morphisms of filtered cochain complexes, 
	\item $\mathfrak{D}^G$ the category of bounded cochain $G$-complexes of $\mathbb{Z}_2$-vector spaces and equivariant morphisms of cochain complexes.
\end{itemize}
\end{de}

To any real algebraic variety $X$, we can associate its semialgebraic cochain complex with closed supports and $\mathbb{Z}_2$-coefficients $C^*(X)$ (see \cite{LP}) : it is by definition the dual cochain complex of the semialgebraic chain complex with closed supports $C_*(X)$ of $X$ (we refer to \cite{MCP} for the precise definition of $C_*(X)$). This cochain complex $C^*(X)$ computes the cohomology with compact supports of $X(\mathbb{R})$ with coefficients in $\mathbb{Z}_2$, which we denote simply by $H^*(X)$ in this paper. If $X$ is now a real algebraic $G$-variety, the action of $G$ on $X$ induces, thanks to the contravariant functoriality of the semialgebraic cochain complex $C^*$, an action on the cochain complex $C^*(X)$, which becomes a cochain $G$-complex, that is an object of $\mathfrak{D}^G$.
\\

Furthermore, the semialgebraic cochain complex $C^*(X)$ of $X$ can be equipped with the dual geometric filtration $\mathcal{G}^{\bullet}$, defined in \cite{LP} in the following way :
$$\mathcal{G}^p C^q(X) := \{ \varphi \in C^q(X)~|~ \varphi \equiv 0 \mbox{ on } \mathcal{G}_{p-1} C_q(X) \}$$ 
where $\mathcal{G}_{\bullet} C_*(X)$ is the geometric filtration on the semialgebraic chains with closed supports of $X$ defined in \cite{MCP}. The dual geometric filtration is a decreasing filtration 
$$C^k(X) = \mathcal{G}^{-k} C^k(X) \supset \mathcal{G}^{-k+1} C^k(X) \supset \cdots \supset \mathcal{G}^0 C^k(X) \supset \mathcal{G}^1 C^k(X) = 0$$
on $C^*(X)$. Moreover, the contravariant functoriality of the dual geometric filtration allows the action of $G$ on $X$ to induce on action on the filtered cochain complex $\mathcal{G}^{\bullet} C^*(X)$, which make it into an object of $\mathfrak{C}^G$ :

\begin{de} \label{defdugeofilact} Let $X$ be a real algebraic $G$-variety. We denote by ${}^G\!\mathcal{G}^{\bullet} C^*(X)$ (or simply $\mathcal{G}^{\bullet} C^*(X)$ when the context is clear) the filtered cochain complex $\mathcal{G}^{\bullet} C^*(X)$ equipped with the induced action of $G$. We call the functor
$${}^G\!\mathcal{G}^{\bullet} C^* :  \mathbf{Sch}_c^G(\mathbb{R}) \rightarrow \mathfrak{C}^G$$
the dual geometric filtration with action of $G$.
\end{de}

In $\cite{LP}$ is proved that the dual geometric filtration realizes the cohomological weight complex and that it is unique up to filtered quasi-isomorphism. Recall that any bounded cochain complex equipped with a bounded decreasing filtration induces a second quadrant spectral sequence $(E_r)_{r \geq 0}$, which converges to the cohomology of the complex. A filtered quasi-isomorphism between two such filtered complexes is a filtered morphism inducing an isomorphism at the level $E_1$ of the induced spectral sequences.

We are going to prove that the dual geometric filtration with action of $G$ is unique up to equivariant filtered quasi-isomorphism. We actually show the existence and uniqueness, both up to equivariant filtered quasi-isomorphism, of a cohomological weight complex with action of $G$ on the category of real algebraic $G$-varieties, using Th\'eor\`eme 3.5 of \cite{Pri-CA}, which is a version with action of Th\'eor\`eme 2.2.2 of \cite{GNA}. We then prove that the dual geometric filtration with action of $G$ realizes the cohomological weight complex with action of $G$.

First denote by $H o \, \mathfrak{C}^G$ the localization of the category $\mathfrak{C}^G$ with respect to equivariant filtered quasi-isomorphisms (we also call such morphisms quasi-isomorphism of $\mathfrak{C}^G$) and, if $(K^*, \delta)$ is a cochain $G$-complex, denote by ${}^G\!F_{can} K^*$ the canonical filtration of $K^*$ (see for instance Definition 3.3 of \cite{LP}) equipped with the induced action of $G$. We have the following result, which is the cohomological counterpart of Th\'eor\`eme 3.7 of \cite{Pri-CA} :

\begin{theo} \label{wcactcohom}The contravariant functor
$${}^G\!F_{can} C^* : \mathbf{V}^G(\mathbb{R}) \rightarrow H o \, \mathfrak{C}^G~;~ M \mapsto {}^G\!F_{can} C^*(M)$$
extends to a contravariant functor
$${}^G\!\mathcal{W} C^* : \mathbf{Sch}_c^G(\mathbb{R}) \rightarrow H o \, \mathfrak{C}^G$$
verifying the two following conditions :

\begin{enumerate}

\item For an acyclic square
\begin{equation*} \label{acyclic_square}
\begin{array}{ccc}
\widetilde Y & \stackrel{j}{\hookrightarrow} & \widetilde X\\
 ~ \downarrow {\scriptstyle \pi} & & ~ \downarrow {\scriptstyle \pi}\\
Y & \stackrel{i}{\hookrightarrow} & X
\end{array} \tag{2.1}
 \end{equation*}
in $\mathbf{Sch}_c^G(\mathbb{R})$, the simple filtered complex of the diagram
$$\begin{array}{ccc}
{}^G\!\mathcal{W} C^*(\widetilde Y) & \stackrel{j^*}{\longleftarrow} & {}^G\!\mathcal{W} C^* (\widetilde X)\\
\uparrow_{\pi^*} & & \uparrow_{\pi^*}\\
{}^G\!\mathcal{W} C^*(Y) & \stackrel{i^*}{\longleftarrow} & {}^G\!\mathcal{W} C^*(X)
\end{array}$$
is acyclic, i.e. isomorphic in $H o \, \mathfrak{C}^G$ to the zero complex.\\

\item For an equivariant closed inclusion $Y \hookrightarrow X$, the simple filtered complex of the diagram
$${}^G\!\mathcal{W}C^*(Y) \longleftarrow {}^G\!\mathcal{W}C^*(X)$$
is isomorphic in  $H o \, \mathfrak{C}^G$ to ${}^G\!\mathcal{W}C^*(X \setminus Y)$.\\
\end{enumerate}

Such a functor ${}^G\!\mathcal{W}C^*$ is unique up to a unique equivariant filtered quasi-isomorphism and we call it the cohomological weight complex with action of $G$.
\end{theo}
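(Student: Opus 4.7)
The plan is to apply the equivariant extension criterion, Th\'eor\`eme 3.5 of \cite{Pri-CA}, directly to the contravariant functor ${}^G\!F_{can} C^*$ defined on the subcategory $\mathbf{V}^G(\mathbb{R})$ of projective nonsingular real algebraic $G$-varieties. This criterion, being the version with action of Th\'eor\`eme 2.2.2 of \cite{GNA}, packages into a small list of hypotheses the existence and uniqueness of an extension to the whole category $\mathbf{Sch}_c^G(\mathbb{R})$ of a functor initially defined on projective nonsingular varieties, provided this functor behaves well with respect to equivariant elementary acyclic squares (blow-ups along smooth invariant centres) and equivariant disjoint unions. The argument mirrors the homological counterpart, Th\'eor\`eme 3.7 of \cite{Pri-CA}, so most of the work is to transpose each verification from chains to cochains, relying on the contravariant functoriality of $C^*$ already established in \cite{LP}.

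First I would check that the assignment $M \mapsto {}^G\!F_{can} C^*(M)$ is a well-defined contravariant functor with values in $H o \, \mathfrak{C}^G$: contravariance and equivariance come from the contravariant functoriality of the semialgebraic cochain complex $C^*$ combined with the fact that morphisms in $\mathbf{V}^G(\mathbb{R})$ are equivariant, and the canonical filtration is plainly functorial in the cochain map. The next step is the verification of the two hypotheses of the extension criterion. For the elementary acyclic-square hypothesis, applied to the equivariant blow-up $\widetilde{M} \to M$ of a projective nonsingular $G$-variety along a smooth invariant centre $N$ with exceptional divisor $\widetilde N$, the simple filtered cochain complex of the resulting commutative square must be acyclic in $H o \, \mathfrak{C}^G$. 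For $M$ and $N$ compact nonsingular, the canonical filtration coincides with the truncation by cohomological degree and the required filtered quasi-isomorphism reduces to the blow-up formula in $\mathbb{Z}_2$-cohomology endowed with the $G$-action induced by functoriality, which is exactly the equivariant dual of the fact used in Th\'eor\`eme 3.7 of \cite{Pri-CA}. The disjoint-union hypothesis is immediate from the definition of $C^*$ and of ${}^G\!F_{can}$.

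Once the criterion applies, it yields at once the existence of the contravariant functor ${}^G\!\mathcal{W} C^*$ together with its uniqueness up to a unique equivariant filtered quasi-isomorphism, and also the acyclicity condition (1) for arbitrary acyclic squares in $\mathbf{Sch}_c^G(\mathbb{R})$. The inclusion-exclusion condition (2) is then a formal consequence: applying (1) to an equivariant acyclic square obtained from an equivariant resolution of the closed pair $(X, Y)$ allows one to identify the simple filtered complex of ${}^G\!\mathcal{W} C^*(X) \to {}^G\!\mathcal{W} C^*(Y)$ with ${}^G\!\mathcal{W} C^*(X \setminus Y)$, in parallel with the homological case of \cite{Pri-CA} and the non-equivariant cohomological case of \cite{LP}.

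The main obstacle will be the verification of the elementary acyclic-square hypothesis at the level of the canonical filtration with its $G$-action: one must be sure that the equivariant blow-up formula is compatible not only with the $\mathbb{Z}_2$-cohomology but also with the filtration and with the $G$-action coming from functoriality. This is where the equivariant features genuinely intervene; but since the required compatibility has already been established in the homological framework of \cite{Pri-CA}, it will transpose to the cochain level thanks to the duality between $C_*$ and $C^*$ built into the definition of the dual geometric filtration.
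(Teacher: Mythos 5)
Your argument is sound, but it distributes the work differently from the paper. You invoke Th\'eor\`eme 3.5 of \cite{Pri-CA} to produce both the existence and the uniqueness of the extension, after checking its hypotheses (behaviour on equivariant blow-up squares and on equivariant disjoint unions) for ${}^G\!F_{can}C^*$ on $\mathbf{V}^G(\mathbb{R})$. The paper uses the criterion only for uniqueness: existence is obtained directly by taking the non-equivariant cohomological weight complex $\mathcal{W}C^*(X)$ of \cite{LP} and equipping it with the $G$-action induced by its contravariant functoriality, conditions (1) and (2) then being inherited from the non-equivariant statement because all the maps involved are equivariant. Both routes are legitimate; the paper's choice has the advantage of identifying ${}^G\!\mathcal{W}C^*(X)$ concretely as $\mathcal{W}C^*(X)$-with-action, an identification exploited later (for instance to see that $H^*({}^G\!\mathcal{W}C^*(X)) \cong H^*(X)$ equivariantly and to express ${}^G\!\widetilde{E}_2$ as group cohomology of the non-equivariant spectral sequence), whereas on your route this identification must be recovered afterwards from the uniqueness clause. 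Two smaller points: the criterion also requires checking that $\mathfrak{C}^G$ is a category of cohomological descent and that ${}^G\!F_{can}C^*$ is $\Phi$-rectified --- the paper records both and you should too; and the paper verifies conditions (F1) and (F2) by rerunning the non-equivariant cohomological arguments of Theorem 3.4 of \cite{LP} in the presence of the $G$-action, rather than by dualizing the homological verification of \cite{Pri-CA} as you suggest (note that the relevant duality here concerns the canonical filtration on the dual complex, not the dual geometric filtration), though either transposition can be made to work.
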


\begin{rem}
\begin{itemize}
	\item An acyclic square in $\mathbf{Sch}_c^G(\mathbb{R})$ is a commutative diagram (\ref{acyclic_square}) of objects and morphisms of $\mathbf{Sch}_c^G(\mathbb{R})$ such that $i$ is an equivariant inclusion of a closed subvariety, $\widetilde{Y} = \pi^{-1}(Y)$ and the restriction $\pi : \widetilde{X} \setminus \widetilde{Y} \rightarrow X \setminus Y$ is an equivariant isomorphism.
	\item For the definition of the simple complex associated to a cubical diagram of cochain filtered complexes, we refer to \cite{LP} Definition 3.1. Notice that, if the cochain complexes are elements of $\mathfrak{C}^G$, the associated simple complex can be naturally equipped with the induced action of $G$, considering the diagonal action on direct sums.
\end{itemize}
\end{rem} 
 
\begin{proof}[Proof (of theorem \ref{wcactcohom})] As in the proof of Th\'eor\`eme 3.7 in \cite{Pri-CA}, we show the existence of the functor ${}^G\!\mathcal{W} C^* : \mathbf{Sch}_c^G(\mathbb{R}) \rightarrow H o \, \mathfrak{C}^G$ by using the functoriality of the cohomological weight complex $\mathcal{W} C^* : \mathbf{Sch}_c(\mathbb{R}) \rightarrow H o \, \mathfrak{C}$ of \cite{LP} (Theorem 3.4) : in particular, if $X$ is a real algebraic $G$-variety, ${}^G\!\mathcal{W} C^*(X)$ denotes its cohomological weight complex $\mathcal{W} C^*(X)$ equipped with the action of $G$ induced by (contravariant) functoriality. 

Similarly to the proof of Th\'eor\`eme 3.7 of \cite{Pri-CA}, the uniqueness is then given by Th\'eor\`eme 3.5 of \cite{Pri-CA}, which is a version with action of the extension criterion Th\'eor\`eme 2.2.2 of \cite{GNA} : the category $\mathfrak{C}^G$ is a category of cohomological descent, the functor ${}^G\!F_{can} C^* : \mathbf{V}^G(\mathbb{R}) \rightarrow H o \, \mathfrak{C}^G$ is $\Phi$-rectified (since it is defined on the category $\mathfrak{C}^G$) and it verifies conditions (F1) and (F2). To prove the last assertion, we can use the arguments of $\cite{LP}$ Proof of Theorem 3.4 showing that the functor $F_{can} C^* : \mathbf{V}(\mathbb{R}) \rightarrow H o \, \mathfrak{C}$ verifies the conditions (F1) and (F2) : these arguments remain valid when we consider actions of $G$ on the considered objects and morphisms.
\end{proof}

\begin{rem} 
\begin{itemize}
	\item If $X$ is a real algebraic $G$-variety, we have an isomorphism $H^*({}^G\!\mathcal{W} C^*(X)) \cong H^*(X)$, by Proposition 3.7 of \cite{LP}), and this isomorphism is furthermore equivariant. Indeed, there is actually an isomorphism of functors between the functors $\phi \circ \mathcal{W} C^*$ and $C^*(\cdot)$ : see the proof of Proposition 3.7 of \cite{LP} and see also Remarque 3.9 of \cite{Pri-CA}. 
	\item The cohomological weight filtration $\mathcal{W}^{\bullet}$, induced by $\mathcal{W} C^*$ on the cohomology of real algebraic varieties, as well as the associated cohomological weight spectral sequence  (see Corollary 3.8 of \cite{LP}), can be both equipped with the induced actions of $G$.
	\item If $X$ is a compact nonsingular $G$-variety, its cohomological weight complex with action ${}^G\!\mathcal{W} C^*(X)$ is quasi-isomorphic in $\mathfrak{C}^G$ to ${}^G\!F_{can} C^*(X)$ (we can adapt the proof of Proposition 3.11 of \cite{LP} to our framework with action).  
\end{itemize}

\end{rem}

Now, we show that the dual geometric filtration with action realizes the cohomological weight complex with action. As a consequence, we obtain the uniqueness of ${}^G\!\mathcal{G}^{\bullet} C^*$ up to equivariant filtered quasi-isomorphism.
wcactcohom dgcwc
\begin{prop} \label{dgcwc}
The dual geometric filtration with action ${}^G\!\mathcal{G}^{\bullet} C^* :  \mathbf{Sch}_c^G(\mathbb{R}) \rightarrow \mathfrak{C}^G$ induces the cohomological weight complex with action ${}^G\!\mathcal{W} C^* : \mathbf{Sch}_c^G(\mathbb{R}) \rightarrow H o \, \mathfrak{C}^G$. 
\end{prop}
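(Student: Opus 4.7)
My plan is to invoke the uniqueness clause of Theorem \ref{wcactcohom} applied to the composition $\mathbf{Sch}_c^G(\mathbb{R}) \xrightarrow{{}^G\!\mathcal{G}^{\bullet} C^*} \mathfrak{C}^G \to H o \, \mathfrak{C}^G$. Concretely, I must verify three things: that this composition restricts on $\mathbf{V}^G(\mathbb{R})$ to ${}^G\!F_{can} C^*$ up to equivariant filtered quasi-isomorphism, that it satisfies the acyclic-square condition (1), and that it satisfies the closed-inclusion condition (2). The uniqueness statement will then produce a canonical isomorphism in $H o \, \mathfrak{C}^G$ between ${}^G\!\mathcal{G}^{\bullet} C^*$ and ${}^G\!\mathcal{W} C^*$.

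For the first point, Proposition 3.11 of \cite{LP} provides, for every $M \in \mathbf{V}(\mathbb{R})$, a filtered quasi-isomorphism $\mathcal{G}^{\bullet} C^*(M) \simeq F_{can} C^*(M)$ in $\mathfrak{C}$, constructed in a manner natural in $M$. Applied to equivariant automorphisms of $M$, naturality forces this comparison map to commute with the $G$-action induced by functoriality on both sides, yielding the required equivariant filtered quasi-isomorphism in $\mathfrak{C}^G$, i.e. an isomorphism in $Ho\,\mathfrak{C}^G$.

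For conditions (1) and (2), \cite{LP} (Theorem 3.4 and its proof) establishes the non-equivariant analogues: the simple filtered complex of the diagram attached to an acyclic square is acyclic, and the simple filtered complex of a closed inclusion $Y \hookrightarrow X$ is filtered quasi-isomorphic to $\mathcal{G}^{\bullet} C^*(X \setminus Y)$. The morphisms $i^*$, $j^*$, $\pi^*$ are defined by (contravariant) functoriality of $\mathcal{G}^{\bullet} C^*$, hence are $G$-equivariant whenever $i$, $j$, $\pi$ are; moreover the simple-complex construction inherits the diagonal $G$-action, as noted in the remark following Theorem \ref{wcactcohom}. The filtered quasi-isomorphisms to the zero complex (respectively to ${}^G\!\mathcal{G}^{\bullet} C^*(X \setminus Y)$) furnished by \cite{LP} are themselves natural in the square (respectively in the inclusion), so they promote to equivariant filtered quasi-isomorphisms and represent the required isomorphisms in $Ho\,\mathfrak{C}^G$.

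The main obstacle is verifying this naturality claim systematically: each quasi-isomorphism extracted from \cite{LP} must be checked to be a natural transformation of functors on $\mathbf{Sch}_c(\mathbb{R})$, so that its components automatically commute with the $G$-actions induced by functoriality. This is precisely the style of argument already used for the homological weight complex in the proof of Th\'eor\`eme 3.7 of \cite{Pri-CA}, and it transfers to the dual/cohomological setting without essential change. Once this naturality is secured, the uniqueness part of Theorem \ref{wcactcohom} identifies ${}^G\!\mathcal{G}^{\bullet} C^*$ with ${}^G\!\mathcal{W} C^*$ via a unique equivariant filtered quasi-isomorphism, which is the desired conclusion.
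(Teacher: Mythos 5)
Your proposal is correct and follows essentially the same route as the paper: verify the extension property on $\mathbf{V}^G(\mathbb{R})$ together with conditions (1) and (2) by observing that the relevant morphisms from \cite{LP} are natural, hence equivariant, and then invoke the uniqueness clause of Theorem \ref{wcactcohom}. The only cosmetic difference is that the paper realizes the comparison $\mathcal{G}^{\bullet}C^*(X) \cong F_{can}^{\bullet}C^*(X)$ in $H o \, \mathfrak{C}^G$ by an explicit equivariant zigzag through the dual canonical filtration $(F_{can})^{\bullet}_{\vee}C^*(X)$ (citing the proof of Proposition 4.3 of \cite{LP}) and checks conditions (1) and (2) via the equivariance of the short exact sequences of Lemma 4.2 of \cite{LP}, rather than by a blanket naturality appeal.
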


\begin{proof}
We denote again by ${}^G\!\mathcal{G}^{\bullet} C^*$ the functor $\mathbf{Sch}_c^G(\mathbb{R}) \rightarrow H o \, \mathfrak{C}^G$ obtained by composing the the functor ${}^G\!\mathcal{G}^{\bullet} C^* :  \mathbf{Sch}_c^G(\mathbb{R}) \rightarrow \mathfrak{C}^G$ with the localization $\mathfrak{C}^G \rightarrow H o \, \mathfrak{C}^G$. 

First, this functor verifies the conditions 1 and 2 of theorem \ref{wcactcohom}. Indeed, the morphisms of the short exact sequences of complexes of Lemma 4.2 of \cite{LP} are equivariant if we consider $G$-varieties.

Secondly, we show that ${}^G\!\mathcal{G}^{\bullet} C^*$ verifies the extension property as well. Let $X$ be a nonsingular projective $G$-variety. The (equivariant) inclusion of the geometric filtration $\mathcal{G}_{\bullet} C_*(X)$ in the canonical filtration $F^{can}_{\bullet} C_*(X)$ induces an equivariant morphism between the dual canonical filtration and the dual geometric filtration
$$(F_{can})^{\bullet}_{\vee} C^*(X) \rightarrow \mathcal{G}^{\bullet} C^*(X)$$
which is, because $X$ is compact nonsingular, a filtered quasi-isomorphism (see the proof of Proposition 4.3 of \cite{LP}), that is a quasi-isomorphism of $\mathfrak{C}^G$. On the other hand, there is an (equivariant) inclusion of the canonical filtration $F_{can}^{\bullet} C^*(X)$ in $(F_{can})^{\bullet}_{\vee} C^*(X)$ which is also a quasi-isomorphism of $\mathfrak{C}^G$. As a consequence, $\mathcal{G}^{\bullet} C^*(X)$ and $F_{can}^{\bullet} C^*(X)$ are isomorphic in $H o \, \mathfrak{C}^G$. 
\end{proof}

As for the homological counterpart of \cite{Pri-EWF}, the dual geometric filtration will induce the cohomological equivariant weight filtration, that we construct in the next section.

\section{Cohomological equivariant weight filtration for real algebraic $G$-varieties} \label{seccohomequivwf}

Let $G$ be a finite group.

Similarly to what we did in \cite{Pri-EWF}, we construct a weight filtration on the equivariant cohomology of real algebraic $G$-varieties. The equivariant cohomology we consider is defined by J. van Hamel in \cite{VH} and coincide with the classical equivariant cohomology (see for instance \cite{Bro}) for compact varieties. It is a mix of cohomology with compact supports and group cohomology.

Precisely, we define below a functor on bounded cochain $G$-complexes which computes this equivariant cohomology, then extend it to the category of filtered bounded cochain $G$-complexes and finally apply it to the dual geometric filtration with action.

We will then focus on the induced spectral sequences, which contain rich information about the equivariant geometry of real algebraic $G$-varieties.

\subsection{The functor $L^*$}

We refer to \cite{Bro} and \cite{CTVZ} for background about group cohomology with coefficients in a module (see also the first part of section 3.1 of \cite{Pri-EWF}). In the following definition, we consider a functor $L^*$ that we use to define the cohomology of the group $G$ in a bounded cochain $G$-complex ; if $X$ is a real algebraic $G$-variety, the equivariant cohomology of $X$ will be for us the cohomology of the complex $L^*(C^*(X))$.

First denote by $\mathfrak{D}_+$ the category of bounded below cochain complexes of $\mathbb{Z}_2$-vector spaces, and by $H o \, \mathfrak{D}_+$ its localization with respect to quasi-isomorphisms. 

\begin{defprop} \label{defproplcohom} Let $K^*$ be in $\mathfrak{D}^G$. Consider $... \rightarrow F_2 \xrightarrow{\Delta_2} F_1 \xrightarrow{\Delta_1} F_0 \rightarrow \mathbb{Z} \rightarrow~0$ a resolution of $\mathbb{Z}$ by projective $\mathbb{Z}[G]$-modules.

We define the cochain complex $L^*(K^*)$ of $\mathfrak{D}_+$ to be the total complex associated to the double complex
$$(\Hom_G(F_{p},K^q))_{p,q \in \mathbb{Z}}.$$
The operation which associates to a complex $K^*$ of $\mathfrak{D}^G$ the complex $L^*(K^*)$ of $\mathfrak{D}_+$ is a covariant functor.
\end{defprop}

If $K^*$ is in $\mathfrak{D}^G$, denote by $H^*(G, K^*)$ the cohomology of $L^*(K^*)$. Since this last cochain complex is the total complex of a double complex, we have the following two spectral sequences which both converge to $H^*(G, K^*)$ :
$$\left. \begin{matrix}
{}_{I}\!E_2^{p,q} & = & H^{p}(G,H^q(K^*)) \\
{}_{II}\!E_1^{p,q} & = & H^{p}(G,K^q)
\end{matrix} \right\} \Longrightarrow H^{p+q}(G,K^*).$$ 
Considering the first spectral sequence (called the Hochschild-Serre spectral sequence associated to $G$ and $K^*$), since group cohomology with coefficients in a module is independent of the considered projective resolution, we can claim that so do $H^*(G, K^*)$. We then call $H^*(G, K^*)$ the group cohomology of $G$ with coefficients in the cochain complex $K^*$. In particular, the composition of the functor $L^*$ with the localization $\mathfrak{D}_+ \rightarrow H o \, \mathfrak{D}_+$ is also independent of the considered projective resolution.  

Furthermore, this functor $L^*$ preserves quasi-isomorphisms of $\mathfrak{D}^G$ : if $f : K^* \rightarrow M^*$ is an equivariant quasi-isomorphism, then it induces an isomorphism from the level ${}_{I}\!E_2$ of the induced Hochschild-Serre spectral sequences and therefore between the cohomologies of G with coefficients in $K^*$ and $M^*$.

We also denote by $L^*$ the induced functor $H o \, \mathfrak{D}^G \rightarrow H o \, \mathfrak{D}_+$.
\\

Now, we extend the functor $L^*$ to the categories of filtered cochain complexes $\mathfrak{C}^G$ and $\mathfrak{C}_+$, where $\mathfrak{C}_+$ denotes the category of bounded below cochain complexes of $\mathbb{Z}_2$-vector spaces equipped with a decreasing bounded filtration, and morphisms of filtered complexes.

\begin{de} Let $(K^*, J^{\bullet})$ be in $\mathfrak{C}^G$. We define an induced bounded decreasing filtration $\mathfrak{J}^{\bullet}$ on $L^*(K^*)$ by setting
$$\mathfrak{J}^{\alpha} L^k(K^*) := L^k(\mathfrak{J}^{\alpha} K^*).$$
\end{de}

As in \cite{Pri-EWF} Proposition 3.7, we can show that, in $H o \, \mathfrak{C}_+$, the couple $(L^*(K^*), \mathfrak{J}^{\bullet})$ is independent of the considered projective resolution, and, as in \cite{Pri-EWF} Proposition 3.8, that the induced functor $L^* : \mathfrak{C}^G \rightarrow H o \, \mathfrak{C}_+$ preserves filtered quasi-isomorphisms. 

This induces a well-defined functor $L^* : H o \, \mathfrak{C}^G \rightarrow H o \, \mathfrak{C}_+$.

\subsection{Cohomological equivariant weight complex, spectral sequence, filtration}

We begin this part by the definition of the equivariant cohomology of real algebraic $G$-varieties, for which we use the functor $L^*$ :

\begin{de} Let $X$ be a real algebraic $G$-variety. Denote $C^*_G(X) := L^*(C^*(X))$ (considered in $H o \, \mathfrak{D}_+$, or in $\mathfrak{D}_+$ if we fix a resolution of $\mathbb{Z}$ by projective $\mathbb{Z}[G]$-modules). We call
$$H^*(X ; G) := H^*(C^*_G(X))$$
the equivariant cohomology of $X$.
\end{de} 

\begin{rem} \label{HochSer}
The Hochschild-Serre spectral sequence
\begin{equation*} \label{HochSerss}
{}_{I}\!E_2^{p,q} = H^{p}(G,H^q(X)) \Rightarrow H^{p+q}(X ; G)
\tag{3.1}
\end{equation*}
allows to interpret the equivariant cohomology as a mix of cohomology with compact supports and group cohomology, involving the very geometry of the action of the group on the variety. This equivariant cohomology is the same as the one defined in \cite{VH} Chapter III, at least for compact real algebraic $G$-varieties. Notice that if the group $G$ is trivial, the equivariant cohomology coincide with the cohomology with compact supports (in this case, we consider the trivial resolution $... \rightarrow 0 \xrightarrow{} 0 \xrightarrow{} \mathbb{Z} \xrightarrow{id} \mathbb{Z} \rightarrow 0$).  	
\end{rem}

\begin{ex} \label{excircle1} Consider the sphere $S^1$ given by the equation $x^2 + y^2 = 1$ in $\mathbb{R}^2$ and suppose it equipped with the action of the group $G := \mathbb{Z}/2\mathbb{Z}$ given by the involution $\sigma : (x,y) \mapsto (-x,y)$. We use the Hochschild-Serre spectral sequence to compute the equivariant cohomology of $S^1$ (see also Examples 3.3 and 3.13 of \cite{Pri-EWF}).

The page ${}_{I}\!E_2$ is 
$$\begin{array}{ccccc}
\mathbb{Z}_2 \overline{[S^1]}^{\vee} &\mathbb{Z}_2 \overline{[S^1]}^{\vee} & \cdots & \mathbb{Z}_2 \overline{[S^1]}^{\vee}  & \cdots \\
\mathbb{Z}_2 \overline{[\{p_1\}]}^{\vee} &\mathbb{Z}_2 \overline{[\{p_1\}]}^{\vee} & \cdots & \mathbb{Z}_2 \overline{[\{p_1\}]}^{\vee}  & \cdots  
\end{array}
$$
where $p_1$ denotes the point of coordinates $(0 , 1)$ and $\overline{c}^{\vee}$ denotes the linear map $H^k(S^1) = (H_k(S^1))^{\vee} \rightarrow \mathbb{Z}_2$ which associates to the homology class $\overline{c}$ the value $1$. 

We compute the image of the cohomology class $\overline{[S^1]}^{\vee}$ by the differential $d_2$. For this sake, we first represent $\overline{[S^1]}^{\vee}$ by the linear map $\varphi : C_1(S^1) \rightarrow \mathbb{Z}_2$ defined as follows. Suppose that $A$ is a connected one-dimensional closed semialgebraic subset of $S^1$ with $A \neq S^1$, then
$$\varphi([A]) := \begin{cases}
1 & \mbox{ if $p_1 \in \partial A$ and $A \cap \{x < 0\} \neq \emptyset$,} \\
0 & \mbox{ ~otherwise.}
\end{cases}$$
In particular, $\varphi([S^1]) = \varphi([S^1 \cap \{x \leq 0\}]) + \varphi([S^1 \cap \{x \geq 0\}]) = 1+0 = 1$.
\\
We then apply $1 + \sigma$ to $\varphi$ : by definition, $(1 + \sigma) \cdot \varphi = \varphi + \varphi \circ \sigma$ and, if $A$ verifies the same assumptions as above,
$$(\varphi + \varphi \circ \sigma)([A]) = \begin{cases}
1 & \mbox{ if $p_1 \in \partial A$,} \\
0 & \mbox{ ~if $p_1 \notin \partial A$.}
\end{cases}$$
Now, we notice that $(1 + \sigma) \cdot \varphi = \delta_0(\psi) = \psi \circ \partial_1$ where, if $p$ is a point of $S^1$, 
$$\psi([\{p\}]) := \begin{cases}
1 & \mbox{ if $p = p_1$,} \\
0 & \mbox{ ~if $p \neq p_1$,}
\end{cases}$$
and finally, $(1 + \sigma) \cdot \psi = \psi + \psi \circ \sigma \equiv 0$, since $p_1$ is a fixed point under the action of $G$ on $S^1$. 

As a consequence, $d_2(\overline{[S^1]}^{\vee}) = 0$ and ${}_{I}\!E_2 = {}_{I}\!E_{\infty}$. As a conclusion, we obtain
$$H^k(S^1; G) = \begin{cases}
			0 & \mbox{ if $k < 0$,} \\
			\mathbb{Z}_2 & \mbox{ if $k = 0$,} \\
			\mathbb{Z}_2 \oplus \mathbb{Z}_2 & \mbox{ if $k \geq 1$.}
			\end{cases}
$$
\end{ex}

\begin{rem} \label{remnodualg} Since, under the same assumptions, 
$$H_k(S^1; G) = \begin{cases}
			\mathbb{Z}_2 \oplus \mathbb{Z}_2 & \mbox{ if $k \leq 0$,} \\
			\mathbb{Z}_2 & \mbox{ if $k = 1$,} \\
			0 & \mbox{ if $k > 1$.}
			\end{cases}
$$
we can remark that, in the general case, there is no ``classical'' duality between equivariant cohomology and equivariant homology such as for cohomology with compact supports and Borel-Moore homology (see Proposition 2.2 of \cite{LP}).
\end{rem} 

\begin{ex}
Keep the same hypothesis as in example \ref{excircle1} and suppose the action is now given by the free involution $\sigma : (x,y) \mapsto (-x,-y)$. Using the same notation as above, we have
$$(\varphi + \varphi \circ \sigma)([A]) = \begin{cases}
1 & \mbox{ if $p_1 \in \partial A$ and $A \cap \{x < 0\} \neq \emptyset$, or $p_2 \in \partial A$ and $A \cap \{x > 0\} \neq \emptyset$,} \\
0 & \mbox{ ~otherwise}
\end{cases}$$
where $p_2$ is the point of coordinates $(0, -1)$.
\\
Therefore, $(1 + \sigma) \cdot \varphi = \delta_0(\psi) = \psi' \circ \partial_1$ where, if $p$ is a point of $S^1$, 
$$\psi'([\{p\}]) := \begin{cases}
1 & \mbox{ if $p \in S^1 \cap \{x < 0\}$ or $p = p_2$,} \\
0 & \mbox{ if $p \in S^1 \cap \{x > 0\}$ or $p = p_1$.}
\end{cases}$$
and $(1 + \sigma) \cdot \psi' = \psi' + \psi' \circ \sigma : [\{p\}] \mapsto 1$.

Consequently, $d_2(\overline{[S^1]}^{\vee}) = \overline{[\{p_1\}]}^{\vee}$ and the Hochschild-Serre spectral sequence degenerates at page ${}_{I}\!E_3$ :
$$\begin{array}{cccccc}
0 & 0 & 0 & \cdots & 0  & \cdots \\
\mathbb{Z}_2 \overline{[\{p_1\}]}^{\vee} &\mathbb{Z}_2 \overline{[\{p_1\}]}^{\vee} & 0 & \cdots & 0  & \cdots  
\end{array}
$$
and we obtain
$$H^k(S^1; G) = \begin{cases}
			0 & \mbox{ if $k < 0$,} \\
			\mathbb{Z}_2 & \mbox{ if $k = 0$ or $1$,} \\
			0 & \mbox{ if $k > 1$.}
			\end{cases}
$$
\end{ex} 
 
We then consider the dual geometric filtration. We can apply the functor $L^*$, extended to the categories of filtered cochain complexes, to induce a filtration on $C^*_G(\cdot)$ :

\begin{de} If $X$ is a real algebraic $G$-variety, we denote 
$$\Lambda^{\bullet} C^*_G(X) := L^*(\mathcal{G}^{\bullet} C^*(X))$$
and we call this filtered complex of $H o \, \mathfrak{C}_+$ (or $\mathfrak{C}_+$ if we fix a projective resolution of $\mathbb{Z}$ by projective $\mathbb{Z}[G]$-modules) the cohomological equivariant geometric filtration of $X$.
\end{de}  

The operation which associates to any real algebraic $G$-variety its cohomological equivariant geometric filtration is a contravariant functor, since it is the composition of the functors $\mathcal{G}^{\bullet} C^* :  \mathbf{Sch}_c^G(\mathbb{R}) \rightarrow \mathfrak{C}^G$ and $L^*: \mathfrak{C}^G \rightarrow (H o) \mathfrak{C}_+$.
\\

We are going to show that the functor $\Lambda^{\bullet} C^*_G : \mathbf{Sch}_c^G(\mathbb{R}) \rightarrow H o \, \mathfrak{C}_+$ is unique up to filtered quasi-isomorphism (of $\mathfrak{C}_+$) in a way similar to theorem \ref{wcactcohom} and proposition \ref{dgcwc} (see also the homological counterpart Theorem 3.16 of \cite{Pri-EWF}).

For $K^*$ a bounded cochain $G$-complex, we denote $\mathcal{F}^{\bullet}_{can} L^*(K^*) := L^*(F_{can}^{\bullet} K^*)$.

\begin{theo} \label{unicoequivwc} The contravariant functor
$$\mathcal{F}_{can} C_G^* : \mathbf{V}^G(\mathbb{R}) \rightarrow H o \, \mathfrak{C}_+~;~ M \mapsto \mathcal{F}_{can} C_G^*(M)$$
extends to a contravariant functor
$$\Omega C_G^* : \mathbf{Sch}_c^G(\mathbb{R}) \rightarrow H o \, \mathfrak{C}_+$$
verifying the two following conditions :

\begin{enumerate}

\item For an acyclic square
\begin{equation*} \label{acyclic_square2}
\begin{array}{ccc}
\widetilde Y & \stackrel{j}{\hookrightarrow} & \widetilde X\\
 ~ \downarrow {\scriptstyle \pi} & & ~ \downarrow {\scriptstyle \pi}\\
Y & \stackrel{i}{\hookrightarrow} & X
\end{array}
 \end{equation*}
in $\mathbf{Sch}_c^G(\mathbb{R})$, the simple filtered complex of the diagram
$$\begin{array}{ccc}
\Omega C_G^*(\widetilde Y) & \stackrel{j^*}{\longleftarrow} & \Omega C_G^*(\widetilde X)\\
\uparrow_{\pi^*} & & \uparrow_{\pi^*}\\
\Omega C_G^*(Y) & \stackrel{i^*}{\longleftarrow} &\Omega C_G^*(X)
\end{array}$$
is acyclic, i.e. isomorphic in $H o \, \mathfrak{C}_+$ to the zero complex.\\

\item For an equivariant closed inclusion $Y \hookrightarrow X$, the simple filtered complex of the diagram
$$\Omega C_G^*(Y) \longleftarrow \Omega C_G^*(X)$$
is isomorphic in  $H o \, \mathfrak{C}_+$ to $\Omega C_G^*(X \setminus Y)$.\\
\end{enumerate}

Such a functor $\Omega C_G^*$ is unique up to a unique filtered quasi-isomorphism and we call it the cohomological equivariant weight complex.
\end{theo}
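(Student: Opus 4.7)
The plan is to set $\Omega C_G^* := L^* \circ {}^G\!\mathcal{W} C^*$, using the cohomological weight complex with action from Theorem \ref{wcactcohom} and the functor $L^* : H o \, \mathfrak{C}^G \rightarrow H o \, \mathfrak{C}_+$ introduced above. Existence together with the two stated conditions will follow by transport through $L^*$, while uniqueness will come from the extension criterion of Guill\'en-Navarro Aznar (Th\'eor\`eme 2.2.2 of \cite{GNA}) applied in the non-equivariant target $H o \, \mathfrak{C}_+$, the group action having already been absorbed by $L^*$.

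For existence, I would first verify that $\Omega C_G^*$ restricts on $\mathbf{V}^G(\mathbb{R})$ to $\mathcal{F}_{can} C_G^*$. On a compact nonsingular $G$-variety $M$, the third bullet of the remark following Theorem \ref{wcactcohom} gives an equivariant filtered quasi-isomorphism ${}^G\!\mathcal{W} C^*(M) \cong {}^G\!F_{can} C^*(M)$, and applying $L^*$ then yields $\Omega C_G^*(M) \cong \mathcal{F}_{can} C_G^*(M)$ in $H o \, \mathfrak{C}_+$. Conditions (1) and (2) follow from their counterparts in Theorem \ref{wcactcohom}: since $L^*$ is defined via a total complex of $\Hom_G(F_\bullet, -)$, it preserves direct sums and therefore commutes with the simple complex construction, sending acyclic cubical diagrams in $H o \, \mathfrak{C}^G$ to acyclic ones in $H o \, \mathfrak{C}_+$.

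For uniqueness, I would invoke the extension criterion in $H o \, \mathfrak{C}_+$, following the homological counterpart in \cite{Pri-EWF}: the category $\mathfrak{C}_+$ is a category of cohomological descent by the same dualisation of the Guill\'en-Navarro Aznar setup; the functor $\mathcal{F}_{can} C_G^*$ is $\Phi$-rectified since it factors through $\mathfrak{C}_+$; and conditions (F1), (F2) on $\mathbf{V}^G(\mathbb{R})$ follow from those for $F_{can} C^*$ on $\mathbf{V}(\mathbb{R})$ verified in the proof of Theorem 3.4 of \cite{LP}, propagated through the quasi-isomorphism-preserving functor $L^*$.

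The main obstacle is precisely this last transport: one must check that the elementary acyclic squares used in the non-equivariant argument yield, after applying $L^*$ to their associated simple filtered complexes, acyclic objects of $H o \, \mathfrak{C}_+$. The exactness of $L^*$ on $\mathbb{Z}_2$-vector spaces and its compatibility with direct sums should make this a routine but somewhat lengthy verification, and it is the only step where the formalism has to be unpacked carefully.
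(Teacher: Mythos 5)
Your construction $\Omega C_G^* := L^* \circ {}^G\!\mathcal{W} C^*$ and the existence argument (transport of acyclicity and additivity through $L^*$, which preserves filtered quasi-isomorphisms and commutes with the simple filtered complex of a cubical diagram) are exactly the paper's proof, and your verification that the composite restricts to $\mathcal{F}_{can} C_G^*$ on $\mathbf{V}^G(\mathbb{R})$ is fine.

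The one genuine problem is in the uniqueness step. You propose to apply the original extension criterion, Th\'eor\`eme 2.2.2 of \cite{GNA}, on the grounds that ``the group action has already been absorbed by $L^*$.'' This is not correct: what $L^*$ removes is the action on the \emph{target} category (the values now live in $H o \, \mathfrak{C}_+$, with no residual $G$-action), but the \emph{source} category is still $\mathbf{Sch}_c^G(\mathbb{R})$, and the extension criterion operates there --- it needs equivariant resolution of singularities, an equivariant Chow--Hironaka lemma, and equivariant cubical hyperresolutions to compare two extensions of $\mathcal{F}_{can} C_G^*$ on arbitrary $G$-varieties. Th\'eor\`eme 2.2.2 of \cite{GNA} only covers functors defined on $\mathbf{Sch}_c(\mathbb{R})$; the statement you actually need is its version with action on the source, namely Th\'eor\`eme 3.5 of \cite{Pri-CA}, applied with $\mathfrak{C}_+$ as the category of cohomological descent (Propri\'et\'e (1.7.5) of \cite{GNA}) --- this is precisely what the paper does, following the homological case in \cite{Pri-EWF}. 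Your closing reference to ``following the homological counterpart in \cite{Pri-EWF}'' would lead you to the right tool, but the justification you give for bypassing the equivariant criterion is a real gap and should be corrected. The rest of the argument, including the propagation of (F1) and (F2) from ${}^G\!F_{can} C^*$ through $L^*$, is sound.
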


\begin{proof} {\it Existence :} The composition of the functor ${}^G\!\mathcal{W} C^* : \mathbf{Sch}_c^G(\mathbb{R}) \rightarrow H o \, \mathfrak{C}^G$ with the functor $L^* : H o \, \mathfrak{C}^G \rightarrow H o \, \mathfrak{C}_+$ verifies the extension, acyclicity and additivity properties, because so do the functor ${}^G\!\mathcal{W} C^*$ (theorem \ref{wcactcohom}), and because the functor $L^*$ preserves filtered quasi-isomorphisms and commutes with the operation associating to a cubical diagram in $\mathfrak{C}^G$ its simple filtered diagram (it is the cohomological counterpart of Proposition 3.10 of \cite{Pri-EWF}, obtained by a direct computation). We denote this functor $\mathbf{Sch}_c^G(\mathbb{R}) \rightarrow H o \, \mathfrak{C}_+$ by $\Omega C_G^*$.

{\it Uniqueness :} The uniqueness of the cohomological equivariant weight complex with these properties can be obtained in the same way as the uniqueness of the equivariant homological weight complex (proof of Theorem 3.16 of \cite{Pri-EWF}) : use Th\'eor\`eme 3.5 of \cite{Pri-CA} applied to the category of cohomological descent $\mathfrak{C}_+$ (Propri\'et\'e (1.7.5) of \cite{GNA}) and the functor $\mathcal{F}_{can} C_G^* : \mathbf{V}^G(\mathbb{R}) \rightarrow H o \, \mathfrak{C}_+$.
\end{proof}

\begin{rem} If $X$ is a compact nonsingular $G$-variety, its cohomological equivariant weight complex $\Omega C_G^*(X)$ is quasi-isomorphic in $\mathfrak{C}_+$ to $\mathcal{F}_{can} C_G^*(X)$ (because in this case ${}^G\!\mathcal{W} C^*(X)$ is quasi-isomorphic in $\mathfrak{C}^G$ to ${}^G\!F_{can} C^*(X)$).
\end{rem}

Since, by proposition \ref{dgcwc}, the dual geometric filtration with action realizes the cohomological weight complex with action (and because the functor $L^*$ preserves filtered quasi-isomorphisms), we obtain :

\begin{prop} \label{propcoequivgeowc} The cohomological equivariant geometric filtration $\Lambda^{\bullet} C^*_G : \mathbf{Sch}_c^G(\mathbb{R}) \rightarrow H o \, \mathfrak{C}_+$ induces the cohomological equivariant weight complex $\Omega C_G^* : \mathbf{Sch}_c^G(\mathbb{R}) \rightarrow H o \, \mathfrak{C}_+$.
\end{prop}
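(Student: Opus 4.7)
The statement is essentially a formal consequence of Proposition~\ref{dgcwc} combined with the functoriality properties of $L^*$ already established. My strategy is to transport the known equivariant filtered quasi-isomorphism between ${}^G\!\mathcal{G}^{\bullet} C^*$ and ${}^G\!\mathcal{W} C^*$ through the functor $L^*: H o \, \mathfrak{C}^G \rightarrow H o \, \mathfrak{C}_+$.

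First, I would recall that Proposition~\ref{dgcwc} gives an isomorphism of functors
\[
{}^G\!\mathcal{G}^{\bullet} C^* \;\cong\; {}^G\!\mathcal{W} C^* \quad\text{in}\quad \mathrm{Fun}(\mathbf{Sch}_c^G(\mathbb{R}),\, H o \, \mathfrak{C}^G),
\]
that is, after composing ${}^G\!\mathcal{G}^{\bullet} C^*: \mathbf{Sch}_c^G(\mathbb{R}) \rightarrow \mathfrak{C}^G$ with the localization $\mathfrak{C}^G \to H o \, \mathfrak{C}^G$. Next I would invoke the key fact established in the construction of $L^*$: it preserves equivariant filtered quasi-isomorphisms, and therefore passes to a well-defined functor $L^*: H o \, \mathfrak{C}^G \rightarrow H o \, \mathfrak{C}_+$. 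Applying this functor termwise to the natural isomorphism above yields an isomorphism
\[
L^* \circ {}^G\!\mathcal{G}^{\bullet} C^* \;\cong\; L^* \circ {}^G\!\mathcal{W} C^*
\]
of functors $\mathbf{Sch}_c^G(\mathbb{R}) \rightarrow H o \, \mathfrak{C}_+$. By definition the left-hand side is $\Lambda^{\bullet} C^*_G$ and the right-hand side is $\Omega C_G^*$, which is precisely the claim.

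As a sanity check, one could alternatively verify the proposition by appealing directly to the uniqueness part of Theorem~\ref{unicoequivwc}: it would suffice to check that $\Lambda^{\bullet} C^*_G$ agrees with $\mathcal{F}_{can} C_G^*$ on $\mathbf{V}^G(\mathbb{R})$ and satisfies the acyclicity and extension properties. The agreement on compact nonsingular projective $G$-varieties follows from the two equivariant filtered quasi-isomorphisms $F_{can}^{\bullet} C^*(X) \hookrightarrow (F_{can})^{\bullet}_{\vee} C^*(X) \to \mathcal{G}^{\bullet} C^*(X)$ established in the proof of Proposition~\ref{dgcwc}, pushed through $L^*$; while the acyclicity and extension conditions transport from those for ${}^G\!\mathcal{G}^{\bullet} C^*$ (verified in Proposition~\ref{dgcwc}) using that $L^*$ commutes with the simple filtered complex of a cubical diagram and preserves filtered quasi-isomorphisms. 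Either route works, but the first is considerably shorter; there is no real obstacle beyond checking carefully that the isomorphism of Proposition~\ref{dgcwc} is natural in $X \in \mathbf{Sch}_c^G(\mathbb{R})$, so that applying $L^*$ yields an isomorphism of functors and not merely an objectwise isomorphism.
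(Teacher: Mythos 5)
Your proof is correct and follows exactly the paper's argument: the paper deduces the proposition in one line by applying the functor $L^*$, which preserves filtered quasi-isomorphisms, to the isomorphism of Proposition \ref{dgcwc} between ${}^G\!\mathcal{G}^{\bullet} C^*$ and ${}^G\!\mathcal{W} C^*$, so that $\Lambda^{\bullet} C^*_G = L^* \circ {}^G\!\mathcal{G}^{\bullet} C^*$ realizes $\Omega C_G^* = L^* \circ {}^G\!\mathcal{W} C^*$. Your alternative route via the uniqueness part of Theorem \ref{unicoequivwc} is also valid but, as you note, longer than necessary.
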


If $X$ is a real algebraic $G$-variety, we denote by ${}^G\!E_*(X)$ the spectral sequence induced by the filtered cohomological equivariant weight complex : it is well-defined from page $E_1$ and coincide, from page $E_1$, with the spectral sequence induced by the cohomological equivariant geometric filtration. We call it the cohomological equivariant weight spectral sequence of $X$. It converges to the equivariant cohomology $H^*(X ; G)$ of $X$ and we denote by $\Omega$ the decreasing filtration induced on $H^*(X ; G)$. 

As in \cite{MCP}, \cite{LP} and \cite{Pri-EWF}, we reindex the cohomological equivariant weight spectral sequence by setting ${}^G\!\widetilde{E}^{p,q}_r = {}^G\!E^{-q,p+2q}_{r-1}$. We can then read the acyclicity and additivity of the cohomological equivariant weight complex on the rows of the page two of this reindexed spectral sequence~: for instance, if we have an acyclic square (\ref{acyclic_square}), we have, for all $q \in \mathbb{Z}$, a long exact sequence 
$$\cdots \rightarrow {}^G\!\widetilde{E}^{p,q}_2(X) \rightarrow {}^G\!\widetilde{E}^{p,q}_2(Y) \oplus {}^G\!\widetilde{E}^{p,q}_2(\widetilde{X}) \rightarrow {}^G\!\widetilde{E}^{p,q}_2(\widetilde{Y}) \rightarrow {}^G\!\widetilde{E}^{p+1,q}_2(X) \rightarrow \cdots$$
As in \cite{Pri-EWF}, Proposition 3.17, we can also express the page two of ${}^G\!E_*$ as the cohomology of the group $G$ with coefficients in the cohomological weight spectral sequence in the following sense~: for all $p,q \in \mathbb{Z}$,
$${}^G\!\widetilde{E}_2^{p,q} = H^p\left(G, \widetilde{E}_1^{*,q}\right).$$
This leads in particular to consider the following (Hochschild-Serre) spectral sequences
$${}^{q}_{I}\!E_2^{\alpha,\beta} = H^{\alpha}\left(G, \widetilde{E}_2^{\beta, q}\right) \Rightarrow H^{\alpha + \beta}\left(G, \widetilde{E}_1^{*,q}\right) = {}^G\!\widetilde{E}_2^{\alpha + \beta,q},$$
which allows to obtain  the following bounds on the cohomological equivariant weight spectral sequence and filtration :

\begin{prop}
Let $X$ be a real algebraic $G$-variety of dimension $d$. For all $r \geq 2$, $p,q \in \mathbb{Z}$, if $^G\!\widetilde{E}_r^{p,q} \neq 0$ then $0 \leq q \leq d$ and $p \geq 0$, and, for all $k \in \mathbb{Z}$, we have the inclusions
$$H^k(X ; G) = \Omega^{-d} H^k(X ; G) \supset \Omega^{-d+1} H^k(X ; G) \supset \cdots \supset \Omega^0 H^k(X ; G) \supset \Omega^1 H^k(X ; G) = 0.$$ 
\end{prop}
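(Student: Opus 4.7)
\emph{Proof proposal.} The plan is to deduce the equivariant bounds from the analogous (already established) bounds on the non-equivariant cohomological weight spectral sequence, combined with the positivity of group cohomology.

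First, I would invoke the identity ${}^G\!\widetilde{E}_2^{p,q} = H^p(G, \widetilde{E}_1^{*,q})$ stated just above the proposition. Because any projective $\mathbb{Z}[G]$-resolution of $\mathbb{Z}$ is concentrated in non-negative degrees, the functor $H^p(G,-)$ vanishes identically for $p<0$; this immediately gives ${}^G\!\widetilde{E}_2^{p,q}=0$ whenever $p<0$. For the $q$-bound, I would rely on the fact that the non-equivariant reindexed cohomological weight spectral sequence of \cite{LP} satisfies $\widetilde{E}_1^{*,q}=0$ outside $0\leq q\leq d=\dim X$ (the cohomological counterpart of the bound from \cite{MCP}). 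Since group cohomology with coefficients in a zero complex is zero, applying $H^p(G,-)$ row by row preserves this vanishing, whence ${}^G\!\widetilde{E}_2^{p,q}=0$ for $q\notin[0,d]$.

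To pass from page $r=2$ to all $r\geq 2$, I would simply remark that ${}^G\!\widetilde{E}_{r+1}^{p,q}$ is a subquotient of ${}^G\!\widetilde{E}_r^{p,q}$, so the vanishing in both the $p$ and $q$ directions propagates automatically. The filtration inclusions would then follow from convergence. Through the reindexing ${}^G\!\widetilde{E}_r^{p,q} = {}^G\!E_{r-1}^{-q, p+2q}$ and the definition of $\Omega$, the graded piece $\Omega^{-q}H^{p+q}(X;G)/\Omega^{-q+1}H^{p+q}(X;G)$ equals ${}^G\!\widetilde{E}_\infty^{p,q}$. The vanishing of ${}^G\!\widetilde{E}_\infty^{p,q}$ for $q\notin[0,d]$ forces the filtration $\Omega^\bullet H^k(X;G)$ to be constant outside filtration indices $-d,\ldots,0$, yielding exactly the stated chain $H^k(X;G) = \Omega^{-d}H^k(X;G) \supset \cdots \supset \Omega^0 H^k(X;G) \supset \Omega^1 H^k(X;G) = 0$.

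The main obstacle is bookkeeping between the two indexings: one has to translate the vanishing range $0\leq q\leq d$ of the reindexed $\widetilde E$ into the correct filtration range for the original ${}^G\!E$, and confirm that $\Omega^{-d}$ exhausts $H^k(X;G)$ while $\Omega^1$ is zero, rather than merely that the filtration is constant on those indices. Alternatively, I could run this last step through the Hochschild-Serre spectral sequence ${}^{q}_{I}\!E_2^{\alpha,\beta} = H^{\alpha}(G, \widetilde{E}_2^{\beta, q}) \Rightarrow {}^G\!\widetilde{E}_2^{\alpha+\beta, q}$ recalled above, which simultaneously encodes the $\alpha\geq 0$ and the $0\leq q\leq d$ vanishing; once the indexing is aligned, the rest is purely formal.
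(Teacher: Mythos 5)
Your strategy is essentially the paper's --- deduce the equivariant bounds from the non-equivariant support of the weight spectral sequence together with the vanishing of group cohomology in negative degrees, and then read off the filtration from the $E_\infty$ graded pieces --- but one step is misjustified as written. The claim that ``the functor $H^p(G,-)$ vanishes identically for $p<0$'' is false for coefficients in a complex: $H^*(G,K^*)$ is the cohomology of the total complex of $\Hom_G(F_a,K^b)$ in total degree $a+b$, so positivity of the resolution only constrains $a\geq 0$, and a complex with terms in negative degrees has nonzero group cohomology in negative degrees (a module placed in degree $-1$ already gives $H^{-1}(G,K^*)=(K^{-1})^G$). To make your inference valid you must also record that the coefficient complex $\widetilde{E}_1^{*,q}$ is concentrated in non-negative degrees, i.e.\ $\widetilde{E}_1^{\beta,q}=0$ for $\beta<0$; this is true (it is part of the same support statement from \cite{LP} that you invoke for the $q$-bound, coming from $\mathcal{G}^{1}C^k=0$ and $\mathcal{G}^{-k}C^k=C^k$), and once it is added your double-complex argument goes through. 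The paper instead obtains both bounds in one stroke from the auxiliary spectral sequence ${}^{q}_{I}\!E_2^{\alpha,\beta}=H^{\alpha}\left(G,\widetilde{E}_2^{\beta,q}\right)\Rightarrow{}^G\!\widetilde{E}_2^{\alpha+\beta,q}$, using $H^{\alpha}(G,\cdot)=0$ for $\alpha<0$ on \emph{modules} together with the page-two support $\widetilde{E}_2^{\beta,q}\neq 0\Rightarrow\beta\geq 0,\ q\geq 0,\ \beta+q\leq d$ --- exactly the ``alternative'' you mention at the end, so you should either promote that aside to the actual argument or supply the missing page-one vanishing in $\beta$. Your handling of the $q$-bound, the propagation to all $r\geq 2$ by subquotients, and the filtration inclusions via the identification of the graded pieces (which the paper phrases as $\Omega^l H^k(X;G)=\bigoplus_{m\leq -l}{}^G\!\widetilde{E}_{\infty}^{k-m,m}$) matches the paper.
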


\begin{proof}
For the first part, for $p, q \in \mathbb{Z}$, consider the spectral sequence
$${}^{q}_{I}\!E_2^{\alpha,\beta} = H^{\alpha}\left(G, \widetilde{E}_2^{\beta, q}\right) \Rightarrow {}^G\!\widetilde{E}_2^{\alpha + \beta,q},$$
and use the fact that, for $\alpha < 0$, $H^{\alpha}(G, \cdot) = 0$ and that, for all $\beta \in \mathbb{Z}$, $\widetilde{E}_2^{\beta, q} \neq 0$ implies $\beta \geq 0$, $q \geq 0$ and $\beta + q \leq d$ (see \cite{LP}).

Finally, we prove that $H^k(X ; G) = \Omega^{-d} H^k(X ; G)$ and $\Omega^1 H^k(X ; G) = 0$ by using the equalities
$$\Omega^l H^k(X;G) = \bigoplus_{m \leq -l} {}^G\!\widetilde{E}_{\infty}^{k- m, m}.$$

\end{proof}

\begin{rem} \label{remaddcohom} As in the homological framework of \cite{Pri-EWF},
\begin{itemize}
	\item the cohomological equivariant weight spectral sequence of a compact nonsingular real algebraic $G$-variety coincides with its Hochschild-Serre spectral sequence (\ref{HochSerss}) (see Proposition 3.23 of \cite{Pri-EWF}),
	\item we can extract additivities expressed in terms of the spectral sequences
	$${}^{~q}_{II}\!E_1^{\alpha,\beta} = H^{\alpha}\left(G, \widetilde{E}_1^{\beta, q}\right)$$
induced by the dual geometric filtration (see section 4 of \cite{Pri-EWF}),
	\item if $G$ is an odd-order group, we have, for any cochain $G$-complex $K^*$, $L^*(K^*) = (K^*)^G$ and therefore $\Lambda^{\bullet} C^*_G$ and ${}^G\!E_* = (E_*)^G$ (see section 3.4 of \cite{Pri-EWF}) : as a direct consequence, in this case, we can recover the equivariant virtual Betti numbers of \cite{GF} on the rows of ${}^G\!E_2$. 
\end{itemize}

\end{rem}

\section{Products of equivariant weight filtrations} \label{sectprod}

In the last section of \cite{LP}, the K\"unneth isomorphism as well as the cup and cap products on homology and cohomology are showed to be induced from the weight spectral sequences level. In this part, we show how these products on the weight spectral sequences induce their equivariant counterparts on the equivariant weight spectral sequences.

\subsection{Equivariant K\"unneth isomorphism} \label{subsectku}

Let $G$ and $G'$ be two finite groups.

In this paragraph, we show the equivariant analog of Theorem 5.13 of \cite{LP} : if $X$ is a real algebraic $G$-variety and $Y$ is a real algebraic $G'$-variety, then the $G \times G'$-equivariant geometric filtration of the product $X \times Y$ is filtered quasi-isomorphic to the tensor product of the equivariant geometric filtrations of $X$ and $Y$. In order to prove this, we establish the equivariancy of the quasi-isomorphism of Theorem 5.13 of \cite{LP} and we apply the functor $L$, considering a particular projective resolution, namely the bar resolution.
     
We also give the cohomological counterpart of this filtered quasi-isomorphism, with respect to the cohomological equivariant geometric filtration.
\\

If $E$ and $F$ are respectively a $\mathbb{Z}[G]$-module and a $\mathbb{Z}[G']$-module, we can equip the tensor product $E \otimes_{\mathbb{Z}} F$ with an action of $G \times G'$ by setting 
$$(g,g') \cdot (x \otimes y) := g \cdot x \otimes g' \cdot y.$$
If $\mathcal{C}^G$ denotes the category of filtered bounded chain $G$-complexes of $\mathbb{Z}_2$-vector spaces, and if $(K_*, F)$ is now a filtered complex of $\mathcal{C}^G$ and $(M_*,J)$ a filtered complex of $\mathcal{C}^{G'}$, then the tensor product $(K \otimes_{\mathbb{Z}_2} M)_*$ can be equipped with an action of $G \times G'$, by considering the diagonal action of $G \times G'$. Furthermore, the induced filtration $F \otimes J$ (see \cite{LP} Definition 5.10) as well as the induced differential are equivariant with respect to this action :

\begin{lem} The filtered complex $((K \otimes_{\mathbb{Z}_2} M)_*, F \otimes J)$ can be considered as an element of $\mathcal{C}^{G \times G'}$.
\end{lem}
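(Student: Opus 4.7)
The plan is to verify the three structural requirements for an object of $\mathcal{C}^{G \times G'}$ one at a time: that the diagonal action of $G \times G'$ is a genuine action on the chain complex $(K \otimes_{\mathbb{Z}_2} M)_*$, that each filtration piece $(F \otimes J)_n$ is stable under this action, and that the inclusions respect the action. All three verifications are direct computations on pure tensors, extended by bilinearity.

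First I would fix the formula $(g,g') \cdot (x \otimes y) := g \cdot x \otimes g' \cdot y$ as stated in the paragraph preceding the lemma, and observe that the two sub-actions (of $G \times \{e'\}$ and $\{e\} \times G'$) commute on pure tensors, so they combine into a well-defined action of the direct product. Next, recalling that the differential on $(K \otimes_{\mathbb{Z}_2} M)_*$ is the Leibniz-type differential $\partial(x \otimes y) = \partial_K x \otimes y + x \otimes \partial_M y$ (up to the usual sign conventions, which are trivial over $\mathbb{Z}_2$), I would check equivariancy of $\partial$ by using that $\partial_K$ is $G$-equivariant (since $K_* \in \mathcal{C}^G$) and $\partial_M$ is $G'$-equivariant (since $M_* \in \mathcal{C}^{G'}$); the computation reduces to two one-line applications of these facts on each summand.

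For the filtration, I would recall from \cite{LP} Definition 5.10 that $(F \otimes J)_n (K \otimes_{\mathbb{Z}_2} M)_* = \sum_{p+q = n} F_p K_* \otimes_{\mathbb{Z}_2} J_q M_*$. Since $F_p K_*$ is a $G$-subcomplex of $K_*$ and $J_q M_*$ is a $G'$-subcomplex of $M_*$, each summand $F_p K_* \otimes_{\mathbb{Z}_2} J_q M_*$ is stable under the diagonal action of $G \times G'$, hence so is the sum; this also ensures that the canonical inclusion $(F \otimes J)_{n-1} \subset (F \otimes J)_n$ is $G \times G'$-equivariant. Boundedness of $F \otimes J$ is inherited from boundedness of $F$ and $J$ (the tensor filtration vanishes below the sum of the lower bounds and stabilizes above the sum of the upper bounds), completing the verification that $((K \otimes_{\mathbb{Z}_2} M)_*, F \otimes J)$ lies in $\mathcal{C}^{G \times G'}$.

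There is no genuine obstacle here; the lemma is a formal compatibility statement, and the only point that deserves even a line of care is checking that the Leibniz differential on the tensor product is $G \times G'$-equivariant — everything else is either a definition or an immediate consequence of stability of each factor under its own group.
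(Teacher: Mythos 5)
Your proposal is correct and follows essentially the same route as the paper: the paper's proof consists precisely of the Leibniz-rule computation showing that the diagonal action of $G \times G'$ commutes with the tensor-product differential, which is the one step you identify as deserving care. The additional verifications you spell out (stability of each $(F \otimes J)_n$ under the action, equivariance of the inclusions, boundedness) are left implicit in the paper but are exactly the immediate consequences of the stability of each factor that you describe.
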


\begin{proof} We check that the action of $G \times G'$ commutes with the differential.
Let $n$ be an integer, $z = \sum_{i,j} x_i \otimes y_j \in (K \otimes_{\mathbb{Z}_2} M)_n$ and $(g, g') \in G \times G'$. 

If we denote by $\partial$ and $\partial'$ the respective differentials of $K_*$ and $M*$, we have
\begin{eqnarray*}
d ( (g,g') \cdot z) & = & d\left( \sum_{i,j} g \cdot x_i \otimes g' \cdot y_j \right) \\
			  & = & \sum_{i,j} \left[ \partial (g \cdot x_i)  \otimes g' \cdot y_j + g \cdot x_i  \otimes \partial'(g' \cdot y_j) \right] \\
			  & = & \sum_{i,j} \left[ g \cdot \partial (x_i)  \otimes g' \cdot y_j + g \cdot x_i  \otimes g' \cdot \partial'(y_j) \right] \\
			  & = & (g,g') \cdot d(z)
\end{eqnarray*} 
\end{proof}

As a consequence, if $X$ is a real algebraic $G$-variety and $Y$ is a real algebraic $G'$-variety, the tensor product $\mathcal{G}_{\bullet} C_*(X) \otimes \mathcal{G}_{\bullet} C_*(Y)$ is a filtered $G \times G'$-complex of $\mathcal{C}^{G \times G'}$. We show that the filtered quasi-isomorphism of Theorem 5.13 of \cite{LP} from $\mathcal{G}_{\bullet} C_*(X) \otimes \mathcal{G}_{\bullet} C_*(Y)$ to $\mathcal{G}_{\bullet} C_*(X \times Y)$ is $(G \times G')$-equivariant.

\begin{prop} \label{uequiv} The morphism of chain complexes
$$u : \mathcal{G}_{\bullet} C_*(X) \otimes \mathcal{G}_{\bullet} C_*(Y) \rightarrow \mathcal{G}_{\bullet} C_*(X \times Y)~;~c_X \otimes c_Y \mapsto c_X \times c_Y$$
is a filtered quasi-isomorphism of $\mathcal{C}^{G \times G'}$.
\end{prop}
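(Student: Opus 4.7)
The plan is to reduce this proposition to the non-equivariant statement, which is already known: Theorem 5.13 of \cite{LP} asserts that $u$ is a filtered quasi-isomorphism in $\mathcal{C}$. Since the spectral sequences induced by a bounded filtration depend only on the underlying filtered chain complex structure, the isomorphism $u_*: E_1 \xrightarrow{\sim} E_1$ persists when we add the equivariant structure. Consequently, the only new thing to verify is that $u$ is a morphism of $\mathcal{C}^{G \times G'}$, that is, that it respects the $(G \times G')$-action. Filtration-preservation of $u$ is contained in \cite{LP}, and the previous lemma ensures the source $\mathcal{G}_{\bullet} C_*(X) \otimes \mathcal{G}_{\bullet} C_*(Y)$ is indeed an object of $\mathcal{C}^{G \times G'}$.

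To check equivariance, I would compute on elementary tensors. Let $c_X \otimes c_Y \in \mathcal{G}_p C_n(X) \otimes \mathcal{G}_q C_m(Y)$ and $(g, g') \in G \times G'$. By definition of $u$ and of the diagonal action on the tensor product,
$$u\bigl((g,g') \cdot (c_X \otimes c_Y)\bigr) \;=\; u(g \cdot c_X \otimes g' \cdot c_Y) \;=\; (g \cdot c_X) \times (g' \cdot c_Y).$$
On the other hand, the action of $(g, g')$ on $X \times Y$ is by definition the diagonal action $(x, y) \mapsto (g \cdot x, g' \cdot y)$, which is the product of the automorphisms of $X$ and $Y$ defined by $g$ and $g'$ respectively. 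Applying the naturality of the semialgebraic chain cross product $\times$ with respect to equivariant regular proper morphisms in each factor (as established in \cite{MCP}), we obtain
$$(g,g') \cdot (c_X \times c_Y) \;=\; (g \cdot c_X) \times (g' \cdot c_Y).$$
Comparing the two displays gives $u\bigl((g,g') \cdot (c_X \otimes c_Y)\bigr) = (g,g') \cdot u(c_X \otimes c_Y)$, so $u$ is equivariant.

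The only mild obstacle is the appeal to functoriality of the semialgebraic chain cross product with respect to the group actions. This is essentially automatic from its construction in \cite{MCP}: each group element acts on $X$ (resp.\ $Y$) by an isomorphism of real algebraic varieties, hence by a pushforward on semialgebraic chains commuting with products. Once this is noted, combining the equivariance just established with the filtered quasi-isomorphism property from \cite{LP} Theorem 5.13 yields the claim.
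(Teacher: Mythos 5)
Your proposal is correct and follows essentially the same route as the paper: the filtered quasi-isomorphism property is imported from Theorem 5.13 of \cite{LP}, and the only new point is the $(G\times G')$-equivariance of $u$, checked on elementary tensors. The paper makes the ``naturality of the cross product'' step concrete by using that a chain $c\times c'$ is by definition represented by the product $A\times B$ of the representing closed semialgebraic sets, so that $(g,g')\cdot[A\times B]=[g\cdot A\times g'\cdot B]=g\cdot c\times g'\cdot c'$, which is exactly the identity you invoke.
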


\begin{proof} If $c$ is a semialgebraic $q$-chain of $X$ represented by a closed semialgebraic set $A$ and $c'$ is a semialgebraic $q'$-chain of $Y$ represented by a closed semialgebraic set $B$, then the product $c \times c'$ is by definition the $q+q'$-chain represented by the product $A \times B$ (see \cite{MCP} Appendix and \cite{LP} Definition 5.1). If furthermore $(g,g') \in G \times G'$, we have 
$$(g,g') \cdot c \times c' = (g,g') \cdot [A \times B] = [g \cdot A \times g' \cdot B] = g \cdot c \times g' \cdot c'.$$

Therefore, if $z = \sum_{i,j} c_i \otimes c_j' \in (C_*(X) \otimes C_*(Y))_n$,
\begin{eqnarray*}
(g,g') \cdot u(z) & = &  (g,g') \cdot \left( \sum_{i,j} c_i \times c_j' \right) \\
& = &  \sum_{i,j} g \cdot c_i \times g' \cdot c_j' \\
& = & u\left(\sum_{i,j} g \cdot c_i \otimes g' \cdot c_j' \right) \\
& = & u((g,g') \cdot z). 
\end{eqnarray*}
\end{proof}

As a direct consequence, the weight complex with action ${}^{G \times G'}\!\mathcal{W} C_*(X \times Y)$ (see \cite{Pri-CA} Th\'eor\`eme 3.5) is isomorphic to the filtered complex ${}^G\!\mathcal{W} C_*(X) \otimes {}^{G'}\!\mathcal{W} C_*(Y)$ in $H o \, \mathcal{C}^G$. Moreover, the equivariancy of the filtered quasi-isomorphism $u$ induces the equivariancy of the isomorphism on spectral sequences from level one
$$ \bigoplus_{p+s=a, \ q+t=b} E^r_{p,q}(X) \otimes E^r_{s,t}(Y) \stackrel{\sim}{\longleftarrow} E^r_{a,b} ( \mathcal{G}_{\bullet} C_*(X) \otimes \mathcal{G}_{\bullet} C_*(Y) )  \stackrel{\sim}{\longrightarrow} E^r_{a,b}(X \times Y)$$
and in particular the equivariancy of the K\"unneth isomorphism
$$\mathcal{W} H_*(X) \otimes \mathcal{W} H_*(Y) \rightarrow \mathcal{W} H_*(X \times Y)$$
(see \cite{LP} Corollary 5.14).
\\

In a second time, we apply the functor $L^{G \times G'}_*$ (see \cite{Pri-EWF} section 3.1) to the equivariant filtered quasi-isomorphism $u$. If we denote $\Lambda_{\bullet} C^G_* := L_* \circ \mathcal{G}_{\bullet} C_*$, this provides us a filtered quasi-isomorphism of $\mathcal{C}_-$
$$L_*(\mathcal{G}_{\bullet} C_*(X) \otimes \mathcal{G}_{\bullet} C_*(Y)) \rightarrow \Lambda C^{G\times G'}_*(X \times Y).$$
We are going to show that the filtered complex $L_*(\mathcal{G}_{\bullet} C_*(X) \otimes \mathcal{G}_{\bullet} C_*(Y))$ is isomorphic in $H o \, \mathcal{C}_-$ to the tensor product of the equivariant geometric filtrations $\Lambda_{\bullet} C^G_*(X) \otimes \Lambda_{\bullet} C^{G'}_*(Y)$ of $X$ and $Y$. Actually, we prove that, if we consider a particular projective resolution, these filtered complexes are isomorphic in $\mathcal{C}_-$.

The resolution we consider is the bar resolution. The definition of the bar resolution we consider can be found in \cite{CTVZ}. The important point is that, if $G$ is a finite group, all the modules of the bar resolution are finitely generated. 

\begin{de} Let $G$ be a (not necessarily) finite group and $k$ be a commutative unitary ring. If $n$ is a nonnegative integer, denote by $B_n$ the tensor product of $n+1$ copies of $k[G]$ over $k$. We make each $B_n$ into a $k[G]$-module by considering the action of $G$ on the first term $k[G]$ of $B_n$~: it is a free $k[G]$-module, generated by the elements $e \otimes g_1 \otimes \cdots \otimes g_n$.

If we define the applications
$$\epsilon : B_0 = k[G] \rightarrow k ~;~ \sum_{g \in G} a_g g \mapsto  \sum_{g \in G} a_g$$
and, for $n \geq 1$,
$$\partial_n : \begin{array}{rcl}
			B_n & \rightarrow & B_{n-1} \\
			g_0 \otimes \cdots \otimes g_n & \mapsto & \displaystyle{\sum_{i = 0}^{n-1} (-1)^i g_0 \otimes \cdots \otimes g_{i-1} \otimes g_i g_{i+1} \otimes g_{i+2} \otimes \cdots \otimes g_n + (-1)^n g_0 \otimes \cdots \otimes g_{n-1}},
		     \end{array}
$$
we can form a free resolution
$$\cdots \rightarrow B_2 \xrightarrow{\partial_2} B_1 \xrightarrow{\partial_1} B_0 \xrightarrow{\epsilon} k \rightarrow 0$$
of $k$ by $k[G]$-modules, called the bar resolution of $k$ over $G$.
\end{de}

Now, let us come back to the finite groups $G$ and $G'$ we considered at the beginning of this section and consider the bar resolutions of $\mathbb{Z}$ over $G$ and $G'$, which we denote by $B$, resp. $B'$. The tensor product of $B$ and $B'$ is a projective resolution of $\mathbb{Z}$ by $\mathbb{Z}[G \times G']$-modules (see for instance \cite{Bro} Chapter V, Proposition (1.1)) and we are going to show the following :

\begin{prop} \label{isolprod} Let $K_*$ be a chain complex of $\mathcal{D}^G$ and $M_*$ be a chain complex of $\mathcal{D}^{G'}$. Then we have a natural isomorphism
$$L^G_{B}(K_*) \otimes L^{G'}_{B'}(M_*) \rightarrow L_{B \otimes B'}^{G \times G'}(K_* \otimes M_*)$$
of $\mathcal{D}_-$ ($\mathcal{D}^G$ and $\mathcal{D}_-$ are the homological analogues of $\mathfrak{D}^G$ and $\mathfrak{D}_+$ : see \cite{Pri-EWF}).
\end{prop}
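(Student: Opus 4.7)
The plan is to unpack both sides of the claimed isomorphism as totalizations of double complexes, produce the natural isomorphism pointwise from the ring identification $\mathbb{Z}_2[G \times G'] \cong \mathbb{Z}_2[G] \otimes_{\mathbb{Z}_2} \mathbb{Z}_2[G']$, and then verify compatibility with the differentials so that it assembles into an isomorphism of total complexes in $\mathcal{D}_-$.

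More precisely, by the homological analog of Definition and Proposition \ref{defproplcohom}, $L^G_B(K_*)$ is the total complex of the double complex $(B_p \otimes_{\mathbb{Z}[G]} K_q)_{p \geq 0,\, q}$, and similarly for $L^{G'}_{B'}(M_*)$. On the other hand, using the isomorphism of $\mathbb{Z}[G \times G']$-complexes $B \otimes B' \to $ (bar resolution of $\mathbb{Z}$ over $G \times G'$) provided by the standard result that the tensor product of projective resolutions resolves $\mathbb{Z}$ over the product (the reference in the excerpt to Brown Chapter V, Proposition 1.1), we can compute $L^{G \times G'}_{B \otimes B'}(K_* \otimes M_*)$ as the total complex of the double complex $((B_p \otimes B'_s) \otimes_{\mathbb{Z}[G \times G']} (K_q \otimes M_t))_{p,s \geq 0,\, q,t}$, where $K_q \otimes M_t$ carries the diagonal action. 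The plan is to define, for each quadruple $(p,s,q,t)$, the natural map
$$\psi_{p,s,q,t} : (B_p \otimes_{\mathbb{Z}[G]} K_q) \otimes_{\mathbb{Z}_2} (B'_s \otimes_{\mathbb{Z}[G']} M_t) \longrightarrow (B_p \otimes B'_s) \otimes_{\mathbb{Z}[G \times G']} (K_q \otimes M_t)$$
by $(b \otimes x) \otimes (b' \otimes y) \mapsto (b \otimes b') \otimes (x \otimes y)$, and check well-definedness over the respective group rings using the ring identification above (the relation $bg \otimes x = b \otimes gx$ in the source corresponds to $(g,1)$ acting trivially across $\otimes_{\mathbb{Z}[G \times G']}$ in the target, and symmetrically for $G'$). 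An inverse is constructed by the same formula read backwards, so each $\psi_{p,s,q,t}$ is an isomorphism of $\mathbb{Z}_2$-vector spaces; naturality in $K_*$ and $M_*$ is immediate from the definition.

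The main obstacle, though still a routine bookkeeping verification, is to check that the maps $\psi_{p,s,q,t}$ are compatible with the two families of differentials coming from $(B, B')$ on the one hand and from $(K_*, M_*)$ on the other, so that they assemble into an isomorphism of the corresponding quadruple complexes, and hence of the associated total (double) complexes obtained by collapsing the two resolution directions into one and the two chain directions into another. Working over $\mathbb{Z}_2$ removes all sign issues, so the verification amounts to comparing the formula for the Koszul-type differential $\partial^B \otimes 1 + 1 \otimes \partial^{B'}$ on $B \otimes B'$ with the tensor-product differentials on the left, together with the analogous comparison for the differentials of $K_*$ and $M_*$; each of these is a direct computation on simple tensors. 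Once this is done, passing to total complexes yields the desired isomorphism in $\mathcal{D}_-$, and boundedness above is preserved because the same totalization convention is used on both sides. Naturality in the chain complexes $K_*$ and $M_*$ follows since every step in the construction is functorial.
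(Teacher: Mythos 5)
There is a genuine gap, and it comes from misreading the definition of the functor $L_*$. You model $L^G_B(K_*)$ as the total complex of the double complex $(B_p \otimes_{\mathbb{Z}[G]} K_q)$, i.e.\ the coinvariants (group homology) construction. But the homological $L_*$ used in this paper is the Hom-based one: as the paper's own computation shows, $L_i^G(K_*) = \bigoplus_{p+q=i}\Hom_G(B_{-p},K_q)$, the homologically reindexed analogue of Definition and Proposition \ref{defproplcohom} (this is also why the output lives in $\mathcal{D}_-$, bounded above, and why the associated spectral sequence has $E_1$-term $\Hom_G(F_{-p},H_q(K_*))$). With the correct definition, the heart of the proof is the interchange map
$$\Hom_G(B,A)\otimes\Hom_{G'}(B',A')\longrightarrow \Hom_{G\times G'}(B\otimes B',A\otimes A'),$$
which is an isomorphism only under a finiteness hypothesis; the paper invokes Proposition \ref{prophomtens} and this is precisely why the bar resolutions of the \emph{finite} groups $G$ and $G'$ are chosen, since all their terms are finitely generated free modules. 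None of this appears in your argument. Indeed, your pointwise maps $\psi_{p,s,q,t}$ between tensor-product double complexes are isomorphisms with no finiteness needed at all, which should have been a warning sign: the surrounding text explicitly announces that the finite generation of $B$ and $B'$ will be used.

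The remainder of your plan (well-definedness over the group rings, compatibility with the two families of differentials, the totalization bookkeeping identifying the two iterated direct sums, and naturality) is the right shape and mirrors what the paper does; the paper's version of that bookkeeping is the explicit reindexing showing both sides equal $\bigoplus_{i,p,q'}\Hom_{G\times G'}(B_{-p}\otimes B'_{-(k-i-q')},K_{i-p}\otimes M_{q'})$. But as written, your proof establishes the K\"unneth-type statement for a different functor (group homology of the tensor product) rather than for the $L_*$ of the proposition, and it omits the one step that actually requires an idea, namely the Hom--tensor interchange isomorphism for finitely generated free modules.
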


To prove this property, we will use the fact that the resolutions $B$ and $B'$ are finitely generated together with the following :

\begin{prop}[\cite{MacLane} Chapter VI - 8, (8.10) and Proposition 8.3] \label{prophomtens} Let $B$ and $B'$ be respectively finitely generated free $\mathbb{Z}[G]$ and $\mathbb{Z}[G']$-modules and let $A$, resp. $A'$, be a $\mathbb{Z}[G]$-module, resp. a $\mathbb{Z}[G']$-module.

The canonical morphism
$$\begin{array}{ccc}
Hom_G(B,A) \otimes Hom_{G'}(B',A') & \rightarrow & Hom_{G \times G'} (B \otimes B', A \otimes A') \\
f \otimes f' & \mapsto & \{ b \otimes b' \mapsto f(b) \otimes f'(b') \}
\end{array}$$
is an isomorphism.

\end{prop}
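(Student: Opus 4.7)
The plan is to prove the isomorphism by a standard reduction to the rank-one case, using additivity of both sides in the free modules $B$ and $B'$, and then verifying the base case directly.

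First, I would check that the map is well-defined and $(G \times G')$-equivariant: for $f \in \Hom_G(B,A)$ and $f' \in \Hom_{G'}(B',A')$, the assignment $b \otimes b' \mapsto f(b) \otimes f'(b')$ is $\mathbb{Z}$-bilinear on $B \times B'$, factors through $B \otimes B'$, and is $(G \times G')$-equivariant because $(g,g') \cdot (b \otimes b') = gb \otimes g'b'$ is sent to $f(gb) \otimes f'(g'b') = g f(b) \otimes g' f'(b') = (g,g') \cdot (f(b) \otimes f'(b'))$. The resulting map $\Phi : \Hom_G(B,A) \otimes \Hom_{G'}(B',A') \to \Hom_{G \times G'}(B \otimes B', A \otimes A')$ is natural in all four variables.

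Next, I would reduce to the case where $B$ and $B'$ are free of rank one. Since $B$ is a finitely generated free $\mathbb{Z}[G]$-module, write $B \cong \bigoplus_{i=1}^n \mathbb{Z}[G]$, and similarly $B' \cong \bigoplus_{j=1}^m \mathbb{Z}[G']$. Then $B \otimes B' \cong \bigoplus_{i,j} \mathbb{Z}[G] \otimes \mathbb{Z}[G']$. Because all direct sums are finite, the $\Hom$ functors convert them to finite direct sums on both sides:
\begin{align*}
\Hom_G(B,A) \otimes \Hom_{G'}(B',A') & \cong \bigoplus_{i,j} \Hom_G(\mathbb{Z}[G], A) \otimes \Hom_{G'}(\mathbb{Z}[G'], A'), \\
\Hom_{G \times G'}(B \otimes B', A \otimes A') & \cong \bigoplus_{i,j} \Hom_{G \times G'}(\mathbb{Z}[G] \otimes \mathbb{Z}[G'], A \otimes A').
\end{align*}
Naturality of $\Phi$ then ensures it decomposes as a direct sum of the rank-one versions, so it suffices to treat the case $B = \mathbb{Z}[G]$, $B' = \mathbb{Z}[G']$. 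The crucial use of finite generation is precisely this step: without it the left-hand $\Hom$ would become a product rather than a sum, and tensor products do not commute with infinite products.

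For the base case, I would use the standard evaluation isomorphisms $\mathrm{ev}_e : \Hom_G(\mathbb{Z}[G], A) \xrightarrow{\sim} A$, $f \mapsto f(e)$, and similarly $\Hom_{G'}(\mathbb{Z}[G'], A') \xrightarrow{\sim} A'$. Combined with the identification $\mathbb{Z}[G] \otimes \mathbb{Z}[G'] \cong \mathbb{Z}[G \times G']$ as $(G \times G')$-modules, we also get $\Hom_{G \times G'}(\mathbb{Z}[G] \otimes \mathbb{Z}[G'], A \otimes A') \xrightarrow{\sim} A \otimes A'$. Under these identifications, $\Phi$ sends $f \otimes f'$ to the map $e \otimes e \mapsto f(e) \otimes f'(e)$, which corresponds to $f(e) \otimes f'(e) \in A \otimes A'$, i.e.\ to the identity map on $A \otimes A'$. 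Hence $\Phi$ is an isomorphism in the base case, and by the reduction step it is an isomorphism in general. The only point requiring care is bookkeeping of the equivariance and the compatibility of the identifications $\mathrm{ev}_e \otimes \mathrm{ev}_e$ with $\mathrm{ev}_{e \otimes e}$, which is a direct check.
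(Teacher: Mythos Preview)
Your proof is correct. Note that the paper does not supply its own argument for this proposition: it simply cites Mac Lane, \emph{Homology}, Chapter~VI, \S8, (8.10) and Proposition~8.3, and uses the result as a black box in the subsequent proof of Proposition~\ref{isolprod}. The approach you take---reducing by additivity in the finitely generated free variables $B$ and $B'$ to the rank-one case $B = \mathbb{Z}[G]$, $B' = \mathbb{Z}[G']$, and then identifying both sides with $A \otimes A'$ via evaluation at the identity---is precisely the standard argument, and is essentially how Mac Lane proves it as well. Your remark that finite generation is what allows $\Hom$ from a direct sum to remain a direct sum (rather than a product) on the left-hand side, so that it can be pulled through the tensor product, correctly isolates the key hypothesis.
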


\begin{proof}[Proof (of Proposition \ref{isolprod})] 
Let $k \in \mathbb{Z}$. Then
\begin{eqnarray*}
\left(L_*^G(K_*) \otimes L_*^{G'}(M_*)\right)_k & = & \bigoplus_{i+j = k} L_i^G(K_*) \otimes L_j^{G'}(M_*) \\
& = &  \bigoplus_{i+j = k} \left(\bigoplus_{p+q =i} Hom_G(B_{-p}, K_q) \right) \otimes \left(\bigoplus_{p'+q' =j} Hom_{G'}(B'_{-p'}, M_{q'}) \right) \\
& \stackrel{\sim}{\longrightarrow} & \bigoplus_{i+j =k,\, p+q = i, \,p'+q'=j} Hom_{G \times G'} (B_{-p} \otimes B'_{-p'}, K_q \otimes M_{q'}) \\
& = & \bigoplus_{i,p,q'} Hom_{G \times G'} (B_{-p} \otimes B'_{-(k-i-q')}, K_{i-p} \otimes M_{q'})
\end{eqnarray*}
On the other hand, we have
\begin{eqnarray*}
L_k^{G \times G'} (K_* \otimes M_*) & = & \bigoplus_{r+s = k} Hom_{G \times G'} \left( \bigoplus_{-p-p'=-r} B_{-p} \otimes B'_{-p'} ~ , \bigoplus_{q +q' = s} K_q \otimes M_{q'} \right) \\
& = & \bigoplus_{r,p,q'} Hom_{G \times G'} \left( B_{-p} \otimes B'_{-(r-p)} , K_{k-r-q'} \otimes M_{q'}\right) \\
& = & \bigoplus_{i,p,q'} Hom_{G \times G'} \left( B_{-p} \otimes B'_{-(k-i-q')} , K_{i-p} \otimes M_{q'}\right) \mbox{ (we set $i := k-r-q'+p$).}
\end{eqnarray*}
For each $k \in \mathbb{Z}$, we then denote by $\psi_k$ the isomorphism
$$\left(L_*^G(K_*) \otimes L_*^{G'}(M_*)\right)_k \stackrel{\sim}{\longrightarrow} L_k^{G \times G'} (K_* \otimes M_*)$$
and we can show by a direct computation that the morphisms $\psi_*$ commute with the differentials of the two complexes $L^G_{B}(K_*) \otimes L^{G'}_{B'}(M_*)$ and $L_{B \otimes B'}^{G \times G'}(K_* \otimes M_*)$, using the naturality of the isomorphisms 
$$Hom_G(B_{-p}, K_{i-p}) \otimes Hom_{G'}(B'_{-(k-i-q')}, M_{q'}) \rightarrow Hom_{G \times G'} (B_{-p} \otimes B'_{-(k-i-q')}, K_{i-p} \otimes M_{q'}).$$
\end{proof}

It remains to show that the above morphism $\psi_*$ is a morphism of filtered complexes :

\begin{prop} \label{prodlhom} Suppose that $K_*$ is a chain complex of $\mathcal{C}^G$ and $M_*$ is a chain complex of $\mathcal{C}^{G'}$. Then the natural isomorphism 
$$L^G_{B}(K_*) \otimes L^{G'}_{B'}(M_*) \rightarrow L_{B \otimes B'}^{G \times G'}(K_* \otimes M_*)$$
is a filtered morphism of $\mathcal{C}_-$ with respect to the induced filtrations (see \cite{Pri-EWF}, as well as \cite{LP} Definition 5.10 for the definition of the tensor product of filtered complexes that we can extend to the category $\mathcal{C}_-$).
\end{prop}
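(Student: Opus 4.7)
The plan is to verify that the isomorphism $\psi_*$ constructed in the proof of Proposition \ref{isolprod} respects the induced filtrations, reducing the check to a summand-by-summand inspection. Since $\psi_*$ is nothing but an identification of direct summands of $\Hom$-spaces, this should follow directly from the naturality of the canonical isomorphism of Proposition \ref{prophomtens} with respect to the inclusions of subcomplexes given by the filtrations.

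First I would unpack both filtrations explicitly. The filtration on $L^G_B(K_*)$ induced by $F_\bullet K_*$ is, at level $\alpha_1$, the subcomplex $L^G_B(F_{\alpha_1} K_*)$, whose $k$-th term is $\bigoplus_{p+q=k} \Hom_G(B_{-p}, F_{\alpha_1} K_q)$, and similarly for the $G'$-factor. By the definition (\cite{LP}, Definition 5.10) of the tensor product of filtered complexes, the filtration on $L^G_B(K_*) \otimes L^{G'}_{B'}(M_*)$ at level $\alpha$ is the sum, over all pairs $(\alpha_1, \alpha_2)$ with $\alpha_1 + \alpha_2 = \alpha$, of the images of $L^G_B(F_{\alpha_1} K_*) \otimes L^{G'}_{B'}(J_{\alpha_2} M_*)$. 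On the target side, the filtration at level $\alpha$ is $L^{G \times G'}_{B \otimes B'}\bigl((F \otimes J)_\alpha (K_* \otimes M_*)\bigr)$, where $(F \otimes J)_\alpha(K_* \otimes M_*)$ is itself the sum over the same pairs $(\alpha_1, \alpha_2)$ of $F_{\alpha_1} K_* \otimes J_{\alpha_2} M_*$ inside $K_* \otimes M_*$.

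The key observation is then that, for each fixed pair $(\alpha_1, \alpha_2)$ with $\alpha_1 + \alpha_2 = \alpha$ and each quadruple $(p, p', q, q')$, the inclusions $F_{\alpha_1} K_q \hookrightarrow K_q$ and $J_{\alpha_2} M_{q'} \hookrightarrow M_{q'}$ fit, by naturality of Proposition \ref{prophomtens}, into a commutative square showing that $\psi$ maps the source summand $\Hom_G(B_{-p}, F_{\alpha_1} K_q) \otimes \Hom_{G'}(B'_{-p'}, J_{\alpha_2} M_{q'})$ into $\Hom_{G \times G'}\bigl(B_{-p} \otimes B'_{-p'},\, F_{\alpha_1} K_q \otimes J_{\alpha_2} M_{q'}\bigr)$, which sits inside $\Hom_{G \times G'}\bigl(B_{-p} \otimes B'_{-p'},\, (F \otimes J)_\alpha(K_* \otimes M_*)_{q+q'}\bigr)$. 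Summing over all admissible summands yields that $\psi_*$ sends the $\alpha$-th step of the source filtration into the $\alpha$-th step of the target filtration, which is what is required. The only real obstacle is the notational bookkeeping of matching the summation conventions on the two sides; since both filtrations are indexed by the same relation $\alpha_1 + \alpha_2 = \alpha$ and the isomorphism of Proposition \ref{prophomtens} is natural in each variable, nothing substantive lies beyond this verification.
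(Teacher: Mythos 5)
Your argument is correct and is essentially the paper's own proof: both unpack the two induced filtrations explicitly and use the naturality of the canonical isomorphism of Proposition \ref{prophomtens} in its module arguments to match, summand by summand, the pieces indexed by $\alpha_1+\alpha_2=\alpha$. The only difference is that you verify just the inclusion of the $\alpha$-th filtration level of the source into that of the target (which is all the statement literally requires), whereas the paper's chain of equalities also yields the reverse inclusion --- using the finite generation of $B$ and $B'$ to identify $\sum_{a+b=l}L_k^{G\times G'}(J_aK_*\otimes I_bM_*)$ with $L_k^{G\times G'}\bigl(\sum_{a+b=l}J_aK_*\otimes I_bM_*\bigr)$ --- so that the filtrations correspond exactly under $\psi_*$, which is the stronger form used afterwards to conclude that the two filtered complexes are isomorphic in $\mathcal{C}_-$.
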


\begin{proof} Let $J_{\bullet}$ and $I_{\bullet}$ be the respective equivariant filtrations of $K_*$ and $M_*$ and denote by $\mathcal{J}_{\bullet}$ and $\mathcal{I}_{\bullet}$ the induced filtrations on $L_*^G(K_*)$ and $L_*^{G'}(M_*)$. Then, if $k, l \in \mathbb{Z}$, we have
\begin{eqnarray*}
(\mathcal{J} \otimes \mathcal{I})_l (L_*^G(K_*) \otimes L_*^G(M_*))_k & := & \bigoplus_{i+j = k} \sum_{a + b = l} \mathcal{J}_a L_i^G(K_*) \otimes \mathcal{I}_b L_j^{G'}(M_*) \\
& = & \sum_{a + b = l} \bigoplus_{i+j = k} L_i^G(J_a K_*) \otimes L_j^{G'} (I_b M_*) \\
& \stackrel{\sim}{\longrightarrow} & \sum_{a + b = l} L_k^{G \times G'}(J_a K_* \otimes I_b M_*) \\
& = &  L_k^{G \times G'}\left( \sum_{a + b = l} J_a K_* \otimes I_b M_*\right) \mbox{ ($B$ and $B'$ are finitely generated)} \\
& = & L_k^{G \times G'}\left( (J \otimes I)_l (K_* \otimes M_*)\right).
\end{eqnarray*} 

\end{proof}

We can finally apply this result to obtain the quasi-isomorphisms of $\mathcal{C}_-$ 
$$\Lambda C^{G}_*(X) \otimes \Lambda C^{G'}_*(Y) \stackrel{\sim}{\longleftarrow} L^{G \times G'}_*(\mathcal{G}_{\bullet} C_*(X) \otimes \mathcal{G}_{\bullet} C_*(Y)) \longrightarrow \Lambda C^{G\times G'}_*(X \times Y).$$
 
As a consequence :

\begin{theo} \label{thequivhomprod} Let $X$ be a real algebraic $G$-variety and $Y$ be a real algebraic $G'$-variety. There is an isomorphism of $H o \, \mathcal{C}_-$
$$\Lambda C^{G}_*(X) \otimes \Lambda C^{G'}_*(Y) \rightarrow \Lambda C^{G\times G'}_*(X \times Y).$$

Consequently, the tensor product of the equivariant weight complexes $\Omega C^{G}_*(X) \otimes \Omega C^{G'}_*(Y)$ of $X$ and $Y$ is isomorphic in $H o \, \mathcal{C}_-$ to the equivariant weight complex $\Omega C^{G \times G'}_*(X \times Y)$ of the product variety. In particular, we obtain a filtered isomorphism
$$\Omega H_*(X ; G) \otimes \Omega H_*(Y ; G') \rightarrow \Omega H_*(X \times Y ; G \times G').$$
\end{theo}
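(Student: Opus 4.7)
The essential ingredients are already in place: Proposition \ref{uequiv} provides an equivariant filtered quasi-isomorphism $u$, and Propositions \ref{isolprod} and \ref{prodlhom} give the compatibility of the functor $L$ with tensor products when we use the bar resolutions $B$ of $\mathbb{Z}$ over $G$ and $B'$ of $\mathbb{Z}$ over $G'$. The plan is simply to chain these together.

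First I would fix the bar resolutions $B$ and $B'$ and use $B \otimes B'$ as the chosen projective resolution of $\mathbb{Z}$ over $\mathbb{Z}[G \times G']$, so that $L^{G \times G'}_*$, $L^G_*$ and $L^{G'}_*$ are represented by concrete cochain-level functors. Applying $L^{G \times G'}_*$ to the equivariant filtered quasi-isomorphism $u$ of Proposition \ref{uequiv}, and using that $L$ preserves filtered quasi-isomorphisms (the homological analogue of the property recalled in Section~3.1, established in \cite{Pri-EWF}), yields a filtered quasi-isomorphism
$$L^{G \times G'}_*\bigl(\mathcal{G}_{\bullet} C_*(X) \otimes \mathcal{G}_{\bullet} C_*(Y)\bigr) \xrightarrow{\sim} L^{G \times G'}_*\bigl(\mathcal{G}_{\bullet} C_*(X \times Y)\bigr) = \Lambda C^{G \times G'}_*(X \times Y)$$
in $\mathcal{C}_-$. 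Next, Proposition \ref{isolprod} together with Proposition \ref{prodlhom} identifies the source with $\Lambda C^{G}_*(X) \otimes \Lambda C^{G'}_*(Y)$ by a natural filtered isomorphism in $\mathcal{C}_-$. Composing, the resulting zigzag becomes an isomorphism in $H o \, \mathcal{C}_-$, proving the first assertion.

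The second assertion is then formal. By (the homological counterpart of) Proposition \ref{propcoequivgeowc}, the equivariant geometric filtration $\Lambda C^G_*$ realizes the equivariant weight complex $\Omega C^G_*$, so the isomorphism above transfers to an isomorphism $\Omega C^{G}_*(X) \otimes \Omega C^{G'}_*(Y) \to \Omega C^{G \times G'}_*(X \times Y)$ in $H o \, \mathcal{C}_-$. The filtered isomorphism on equivariant homology then follows because an isomorphism in $H o \, \mathcal{C}_-$ induces an isomorphism on the associated filtrations of the limit homology; for the source, the filtration on $H_*(L^G_*(C_*(X))) \otimes H_*(L^{G'}_*(C_*(Y)))$ coming from the tensor filtration is the tensor product of the individual $\Omega$-filtrations.

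Nothing in the argument is really hard; the only technical point worth care is that applying $L^{G \times G'}_*$ to $u$ requires an honest equivariant representative at the chain level (not merely a morphism in $H o$), which is why Proposition \ref{uequiv} was stated in $\mathcal{C}^{G \times G'}$ rather than in its localization. Aside from this bookkeeping, the proof is a direct composition of the preceding lemmas, and the main obstacle -- namely, making the functor $L$ compatible with tensor products on the nose -- has already been overcome by the explicit bar-resolution computation of Propositions \ref{isolprod} and \ref{prodlhom}.
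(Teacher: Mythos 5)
Your proposal is correct and follows essentially the same route as the paper: apply $L^{G\times G'}_*$ (with the bar resolutions) to the equivariant filtered quasi-isomorphism $u$ of Proposition \ref{uequiv}, identify the source with $\Lambda C^{G}_*(X) \otimes \Lambda C^{G'}_*(Y)$ via Propositions \ref{isolprod} and \ref{prodlhom}, and pass to the weight complexes using the realization result. The paper presents exactly this zigzag of quasi-isomorphisms in $\mathcal{C}_-$ immediately before stating the theorem, and relegates the independence-of-representative point you raise to the remark that follows it.
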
 

\begin{rem} The complexes $\Omega C^{G}_*(X) \otimes \Omega C^{G'}_*(Y)$ and $\Omega C^{G \times G'}_*(X \times Y)$ are isomorphic in $H o \, \mathcal{C}_-$ regardless of the considered representative of the equivariant weight complex in $\mathcal{C}_-$ and of the considered projective resolutions, by spectral sequences arguments : see Lemma 5.11 of \cite{LP} and Proposition 3.7 of \cite{Pri-EWF}.  
\end{rem}

We end this part with the cohomological counterpart of \ref{thequivhomprod} :
\begin{theo} \label{thequivcohomprod} Let $X$ be a real algebraic $G$-variety and $Y$ be a real algebraic $G'$-variety. The complexes $\Lambda C_{G}^*(X) \otimes \Lambda C_{G'}^*(Y)$ and $\Lambda C^{G\times G'}_*(X \times Y)$ are isomorphic in $H o \, \mathfrak{C}_+$. 

As a consequence, the tensor product of the cohomological equivariant weight complexes $\Omega C_{G}^*(X) \otimes \Omega C_{G'}^*(Y)$ of $X$ and $Y$ is isomorphic in $H o \, \mathfrak{C}_+$ to the cohomological equivariant weight complex $\Omega C_{G \times G'}^*(X \times Y)$ of the product, and we obtain a filtered isomorphism
$$\Omega H^*(X ; G) \otimes \Omega H^*(Y ; G') \rightarrow \Omega H^*(X \times Y ; G \times G').$$
\end{theo}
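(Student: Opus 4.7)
The plan is to mirror the three-stage strategy that establishes the homological analog \ref{thequivhomprod}, adapted to the cohomological setting.

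First, I would establish the cohomological counterpart of Proposition \ref{uequiv}: the $(G \times G')$-equivariance of the filtered quasi-isomorphism of \cite{LP} relating $\mathcal{G}^{\bullet} C^*(X \times Y)$ to $\mathcal{G}^{\bullet} C^*(X) \otimes \mathcal{G}^{\bullet} C^*(Y)$. The verification is parallel to the proof of Proposition \ref{uequiv}: the cohomological cross product sends $\varphi \otimes \psi$ to the functional $c_X \otimes c_Y \mapsto \varphi(c_X)\,\psi(c_Y)$, so compatibility with the diagonal action of $G \times G'$ follows from the equivariance of the homological cross product $u$ already proved in Proposition \ref{uequiv}. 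This produces an equivariant filtered quasi-isomorphism in $\mathfrak{C}^{G \times G'}$.

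Second, I would prove cohomological analogues of Propositions \ref{isolprod} and \ref{prodlhom}: for $K^*$ in $\mathfrak{D}^G$ and $M^*$ in $\mathfrak{D}^{G'}$, with $B$, $B'$ the bar resolutions of $\mathbb{Z}$ over $G$ and $G'$ (so that $B \otimes B'$ is a projective resolution of $\mathbb{Z}$ over $G \times G'$), there is a natural isomorphism
$$L^*_B(K^*) \otimes L^*_{B'}(M^*) \longrightarrow L^*_{B \otimes B'}(K^* \otimes M^*)$$
of $\mathfrak{D}_+$, upgrading to a filtered isomorphism of $\mathfrak{C}_+$ when $K^*$ and $M^*$ lie in $\mathfrak{C}^G$ and $\mathfrak{C}^{G'}$. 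The bidegree bookkeeping is a direct transcription of the proofs of Propositions \ref{isolprod} and \ref{prodlhom}; the essential inputs are Proposition \ref{prophomtens} (which produces the tensor-of-$\Hom$ to $\Hom$-of-tensor isomorphism for the finitely generated free modules $B_p$, $B'_{p'}$) together with the fact that bar resolutions of $\mathbb{Z}$ over finite groups are degreewise finitely generated, which in turn lets $\Hom_G(B_p, -)$ commute with the finite sums of filtered subcomplexes appearing in the definition of the tensor product filtration.

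Third, I would compose. Applying Step 2 to $K^* = \mathcal{G}^{\bullet} C^*(X)$ and $M^* = \mathcal{G}^{\bullet} C^*(Y)$ yields a filtered isomorphism in $\mathfrak{C}_+$ from $\Lambda^{\bullet} C^*_G(X) \otimes \Lambda^{\bullet} C^*_{G'}(Y)$ to $L^*(\mathcal{G}^{\bullet} C^*(X) \otimes \mathcal{G}^{\bullet} C^*(Y))$, and applying the filtered-quasi-isomorphism-preserving functor $L^*$ to the equivariant filtered quasi-isomorphism of Step 1 produces a filtered quasi-isomorphism from the latter to $\Lambda^{\bullet} C^*_{G \times G'}(X \times Y)$. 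Composing gives the first assertion. The consequence on the cohomological equivariant weight complexes then follows from Proposition \ref{propcoequivgeowc}, which identifies $\Lambda^{\bullet} C^*_G$ with $\Omega C_G^*$ in $H o \, \mathfrak{C}_+$; the filtered isomorphism on $\Omega H^*(X ; G) \otimes \Omega H^*(Y ; G')$ is then read off by passing to the limits of the induced spectral sequences, exactly as in the homological Theorem \ref{thequivhomprod}. The principal obstacle is the bidegree and filtration bookkeeping in Step 2, but since the arguments of Propositions \ref{isolprod} and \ref{prodlhom} use the same functorial ingredient (Proposition \ref{prophomtens}), this is essentially a re-indexed reproduction of computations already carried out in the paper.
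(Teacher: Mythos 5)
Your proposal is correct and follows essentially the same route as the paper: equivariance of the comparison between $\mathcal{G}^{\bullet} C^*(X)\otimes\mathcal{G}^{\bullet} C^*(Y)$ and $\mathcal{G}^{\bullet} C^*(X\times Y)$, a cohomological analogue of Propositions \ref{isolprod} and \ref{prodlhom} via bar resolutions and Proposition \ref{prophomtens}, and composition. The only point to state more carefully is that the comparison in Step 1 is a zigzag $\mathcal{G}^{\bullet} C^*(X \times Y)\xrightarrow{u^{\vee}}(\mathcal{G}_{\bullet} C_*(X)\otimes\mathcal{G}_{\bullet} C_*(Y))^{\vee}\xleftarrow{w}\mathcal{G}^{\bullet} C^*(X)\otimes\mathcal{G}^{\bullet} C^*(Y)$ of two filtered quasi-isomorphisms (so an isomorphism only in the localized category), and the equivariance of $w$ requires its own short direct computation rather than following formally from that of $u$.
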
 

\begin{proof} First, we show that the cochain complexes $\mathcal{G}^{\bullet} C^*(X) \otimes \mathcal{G}^{\bullet} C^*(Y)$ and $\mathcal{G}^{\bullet} C^*(X \times Y)$ are isomorphic in $H o \, \mathfrak{C}^G$. Referring to the proof of Proposition 5.17 in \cite{LP}, we have the following quasi-isomorphisms in $\mathfrak{C}$ :
$$\mathcal{G}^{\bullet} C^*(X \times Y)  \stackrel{u^{\vee}}{\longrightarrow} (\mathcal{G}_{\bullet} C_*(X) \otimes \mathcal{G}_{\bullet} C_*(Y) )^{\vee} \stackrel{w}{\longleftarrow} \mathcal{G}^{\bullet} C^*(X) \otimes \mathcal{G}^{\bullet} C^*(Y).$$
Since the morphism $u$ is equivariant (proposition \ref{uequiv}), so is $u^{\vee}$. The morphism $w$ is equivariant as well : if $(g,g') \in G \times G'$, $\varphi \otimes \psi \in C^*(X) \otimes C^*(Y)$ and $\sum_{i,j} c_i \otimes c_j' \in C_*(X) \otimes C_*(Y)$, we have
\begin{eqnarray*}
(g,g') \cdot (w ( \varphi \otimes \psi)) \left(\sum_{i,j} c_i \otimes c_j' \right) & = & w ( \varphi \otimes \psi)\left(\sum_{i,j} g^{-1} \cdot c_i \otimes {g'}^{-1} \cdot c_j' \right) \\
& = & \sum_{i,j} \varphi\left(g^{-1} \cdot c_i \right) \psi\left( {g'}^{-1} \cdot c_j'\right) \\
& = & \sum_{i,j} \left(g \cdot \varphi(c_i )\right) \left(g' \cdot \psi(c_j')\right) \\
& = & w\left( (g,g') \cdot (\varphi \otimes \psi)\right) \left(\sum_{i,j} c_i \otimes c_j' \right)
\end{eqnarray*}

As a consequence, $\mathcal{G}^{\bullet} C^*(X) \otimes \mathcal{G}^{\bullet} C^*(Y)$ and $\mathcal{G}^{\bullet} C^*(X \times Y)$ are isomorphic in $H o \, \mathfrak{C}^G$ via the equivariant filtered quasi-isomorphisms $u^{\vee}$ and $w$ of $\mathfrak{C}^G$.
\\

We can then apply the functor $L^*_{G \times G'}$ to obtain an isomorphism of $H o \, \mathfrak{C}_+$ between the cohomological equivariant geometric filtration $\Lambda C^{G\times G'}_*(X \times Y)$ of the product  $X \times Y$ and the cochain complex $L^*_{G \times G'}\left(\mathcal{G}^{\bullet} C^*(X) \otimes \mathcal{G}^{\bullet} C^*(Y)\right)$. We then use the natural isomorphism of $\mathfrak{C}_+$
$$L_G(K^*) \otimes L_{G'}(M^*) \rightarrow L_{G \times G'}(K^* \otimes M^*)$$
for $K^* \in \mathfrak{C}^G$ and $M^* \in \mathfrak{C}^{G'}$, obtained as in proposition \ref{prodlhom} considering bar resolutions over $G$ and $G'$, to show that $L^*_{G \times G'}\left(\mathcal{G}^{\bullet} C^*(X) \otimes \mathcal{G}^{\bullet} C^*(Y)\right)$ is isomorphic to the tensor product $\Lambda C_{G}^*(X) \otimes \Lambda C_{G'}^*(Y)$ of the cohomological equivariant geometric filtrations of $X$ and $Y$.

\end{proof}

\subsection{Equivariant cup product} \label{subsectcup}

Let $G$ be a finite group and let $X$ be a real algebraic $G$-variety.

In this part, we will prove that the cup product on equivariant cohomology of real algebraic $G$-varieties, defined in \cite{VH} Chapter III - 3, is filtered with respect to the cohomological equivariant weight filtration, from the cohomological equivariant weight spectral sequence level. It will be induced by the cup product defined from the cohomological weight spectral sequence level (see \cite{LP} Propositions 5.20), through the application of the functor $L$.

An important point of the study will consist in carrying the usual properties of cup product on the equivariant cup product.
\\

In \cite{LP}, the cup product is defined in the localized category $H o \, \mathfrak{C}$ as the composition

$$\smile~: \mathcal{G}^{\bullet} C^*(X) \otimes \mathcal{G}^{\bullet} C^*(X) \xrightarrow{(u^{\vee})^{-1} \circ w} \mathcal{G}^{\bullet} C^*(X \times X) \xrightarrow{\Delta^*} \mathcal{G}^{\bullet} C^*(X)$$
where $\Delta^*$ is the morphism of $\mathfrak{C}$ induced by the diagonal map $\Delta : X \rightarrow X \times X~;~x \mapsto (x,x)$.

The isomorphism $(u^{\vee})^{-1} \circ w$ of $H o \, \mathfrak{C}$ is equivariant with respect to the actions induced by the action of the product group $G \times G$ on $X \times Y$, and induces an isomorphism
$$\Lambda C_{G}^*(X) \otimes \Lambda C_{G}^*(X) \longrightarrow \Lambda C_{G\times G}^*(X \times X)$$ 
of $H o \, \mathfrak{C}_+$ (see theorem \ref{thequivhomprod}). We are then going to show that the morphism $\Delta^*$ induces a morphism
$$\Lambda C_{G\times G}^*(X \times X) \longrightarrow \Lambda C_{G}^*(X)$$
using the functoriality of $L$ with respect to the group :
\begin{prop} \label{functgroup} Let $G'$ be a finite group and let $\varphi : G \rightarrow G'$ be a morphism of groups. Let $K^*$ be a cochain complex of $\mathfrak{D}^{G'}$, resp. $\mathfrak{C}^{G'}$. Then $K^*$ can be considered as an element of $\mathfrak{D}^{G}$, resp. $\mathfrak{C}^{G}$, via $\varphi$ (if $g \in G$ and $x \in K^k$, we set $g \cdot x := \varphi(g) \cdot x$), and $\varphi$ induces furthermore a morphism
$$T : L^*_{G'}(K^*) \longrightarrow L^*_{G}(K^*)$$
of $\mathfrak{D}_+$, resp $\mathfrak{C}_+$.
\end{prop}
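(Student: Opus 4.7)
The plan is to construct $T$ explicitly by lifting $\varphi$ to a chain map between fixed projective resolutions, and then pulling back along this lift on the resolution side of the double complex defining $L^*$. First I fix a projective resolution $F_\bullet \to \mathbb{Z}$ by $\mathbb{Z}[G]$-modules and a projective resolution $F'_\bullet \to \mathbb{Z}$ by $\mathbb{Z}[G']$-modules, used to compute $L^*_G$ and $L^*_{G'}$ respectively. Regarded through $\varphi$, the complex $F'_\bullet$ becomes a resolution of the trivial $G$-module $\mathbb{Z}$ by $\mathbb{Z}[G]$-modules (no longer projective in general), so the classical comparison theorem for projective resolutions yields a $\mathbb{Z}[G]$-equivariant chain map $\tilde\varphi_\bullet : F_\bullet \to F'_\bullet$ lifting $\mathrm{id}_\mathbb{Z}$, unique up to $G$-equivariant chain homotopy.

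Next I define $T$ bidegree by bidegree: for $f \in \Hom_{G'}(F'_p, K^q)$, set $T(f) := f \circ \tilde\varphi_p \in \Hom_G(F_p, K^q)$. The $G$-equivariance of $T(f)$ is immediate, since $T(f)(g \cdot x) = f(\varphi(g) \cdot \tilde\varphi_p(x)) = \varphi(g) \cdot f(\tilde\varphi_p(x)) = g \cdot T(f)(x)$, where the action on $K^q$ via $\varphi$ is precisely the one prescribed in the statement. Extending by direct sum to the total complex, I then check that $T$ commutes with both differentials: horizontally, $T$ intertwines the differentials induced by the resolutions because $\tilde\varphi_\bullet$ is itself a chain map; vertically, the differential comes from $K^*$ alone and is untouched by pre-composition with $\tilde\varphi_p$. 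This produces the desired morphism $T : L^*_{G'}(K^*) \to L^*_G(K^*)$ of $\mathfrak{D}_+$.

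For the filtered case $K^* \in \mathfrak{C}^{G'}$, the induced filtration satisfies $\mathfrak{J}^\alpha L^k_{G'}(K^*) = L^k_{G'}(\mathfrak{J}^\alpha K^*)$, and the analogous identity holds for $G$. Since the definition of $T$ only modifies the resolution side of the double complex and leaves $K^*$ intact, it sends $L^k_{G'}(\mathfrak{J}^\alpha K^*)$ into $L^k_G(\mathfrak{J}^\alpha K^*)$ for every $\alpha$, so $T$ is automatically a filtered morphism of $\mathfrak{C}_+$. As a final remark, any two lifts $\tilde\varphi_\bullet$ differ by a $G$-equivariant chain homotopy, which in turn produces a chain homotopy between the corresponding $T$'s, so the induced morphism is canonical once one passes to $H o \, \mathfrak{D}_+$ or $H o \, \mathfrak{C}_+$.

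I expect the main obstacle to be the bookkeeping of the differential-compatibility check on the total complex, where one has to track the sign and indexing conventions for $\Hom_G(F_p, K^q)$ to confirm simultaneously that $T$ intertwines the horizontal (resolution) and vertical ($K^*$) parts of the total differential. Once this compatibility is in hand, $G$-equivariance, filtration preservation, and homotopy invariance of the construction all follow mechanically from the chain-map property of $\tilde\varphi_\bullet$.
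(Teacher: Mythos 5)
Your proposal is correct and follows essentially the same route as the paper: the lift $\tilde\varphi_\bullet$ you obtain from the comparison theorem is exactly the augmentation-preserving $G$-chain map $\tau : F \rightarrow F'$ the paper invokes from Brown, and the definition of $T$ by precomposition, the equivariance and differential checks, and the filtration argument all coincide with the paper's proof.
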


\begin{proof} Let $F \xrightarrow{\epsilon} \mathbb{Z}$, resp. $F' \xrightarrow{\epsilon'} \mathbb{Z}$, be a resolution of $\mathbb{Z}$ by projective $\mathbb{Z}[G]$-modules, resp. $\mathbb{Z}[G']$-modules. There exists an augmentation-preserving $G$-chain map $\tau : F \rightarrow F'$, well-defined up to homotopy, that is there exists a morphism of complexes $\tau : F \rightarrow F'$ such that for $g \in G$ and $x \in F$, $\tau(g \cdot x) = \varphi(g) \cdot \tau(x)$ and $\epsilon' \circ \tau = \epsilon$, and $\tau$ is unique up to homotopy with these properties (see \cite{Bro} Chapter II - 6 and Chapter I - 7, Lemma 7.4).

The morphism $\tau$ then induces, for each $k \in \mathbb{Z}$, a morphism
$$T_k : L^k_{G'}(K^*) = \bigoplus_{p+q = k} Hom_{G'}(F'_p, K^q) \rightarrow \bigoplus_{p+q = k} Hom_{G}(F_p, K^q) = L^k_{G}(K^*),$$
given by the right-composition with $\tau$ : if $g \in G$, $\psi \in Hom_{G'}(F'_p, K^q)$ and $x \in F_p$, $$\varphi(g) \cdot (\psi \circ \tau(x) ) = \psi(\varphi(g) \cdot \tau(x)) = \psi \circ \tau(g \cdot x)).$$ 
Moreover, these morphisms commute with the differentials of the complexes $L^*_{G'}(K^*)$ and $L^*_{G}(K^*)$ (because $\tau : F \rightarrow F'$ is a morphism of complexes). As a consequence, we obtain a morphism of complexes of $\mathfrak{D}_+$
$$T : L^*_{G'}(K^*) \rightarrow L^*_{G}(K^*).$$

If now $K^*$ is a complex of $\mathfrak{C}^{G'}$ with filtration $J$, then it is also a complex of $\mathfrak{C}^{G}$ via $\varphi$, and we check that $T$ is a morphism of filtered complexes. Let $\psi \in Hom_{G'}(F'_p, J_{\alpha} K^q)$, then $T(\psi) = \psi \circ \tau \in Hom_{G}(F_p, J_{\alpha} K^q)$, and therefore $T(\mathcal{J}_{\alpha} L^*_{G'}(K^*)) \subset \mathcal{J}_{\alpha} L^*_{G}(K^*)$.

\end{proof}

Consider the group homomorphism $\delta : G \rightarrow G \times G~;~g \mapsto (g,g)$. According to proposition \ref{functgroup}, $\delta$ induces a morphism
$$\Lambda C_{G\times G}^*(X \times X) = L^*_{G \times G} (\mathcal{G}^{\bullet} C^*(X \times X)) \rightarrow L^*_{G} (\mathcal{G}^{\bullet} C^*(X \times X)) = \Lambda C_{G}^*(X \times X),$$
the action of $G$ on $\mathcal{G}^{\bullet} C^*(X \times X)$ being induced by the diagonal action of $G$ on $X \times X$. Since the diagonal map $\Delta$ is equivariant if we consider the diagonal action of $G$ on $X \times X$, $\Delta$ induces a morphism
$$\Lambda C_{G}^*(X \times X) \longrightarrow \Lambda C_{G}^*(X).$$

We then call cup product and denote by $\smile$ the composition
$$\Lambda C_{G}^*(X) \otimes \Lambda C_{G}^*(X) \longrightarrow \Lambda C_{G\times G}^*(X \times X) \longrightarrow \Lambda C_{G}^*(X \times X) \longrightarrow \Lambda C_{G}^*(X)$$
of $H o \, \mathfrak{C}_+$.

\begin{theo} \label{theoequivcupprod} The cup product
$$\smile \; : \Lambda C_{G}^*(X) \otimes \Lambda C_{G}^*(X) \longrightarrow \Lambda C_{G}^*(X)$$
in $H o \, \mathfrak{C}_+$
induces a morphism of spectral sequences
$$\smile \; : \bigoplus_{p+s = a, q+t = b} {}^G\!E^{p,q}_r(X) \otimes {}^G\!E^{s,t}_r(X) \longrightarrow {}^G\!E^{a,b}_r(X)$$
for $r \geq 1$, and a cup product in equivariant cohomology
$$\smile \; : H^*(X ; G) \otimes H^*(X ; G) \longrightarrow H^*(X;G)$$
which is filtered with respect to the cohomological equivariant weight filtration.
\end{theo}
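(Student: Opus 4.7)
The plan is to recognize that the cup product has already been constructed in the preamble as a composition of three morphisms in $H o \, \mathfrak{C}_+$, so the remaining work is to certify that each of these morphisms is filtered and then invoke the standard principle that a filtered morphism of bounded filtered complexes induces morphisms on the associated spectral sequences from $E_1$ onward and a filtered morphism on the abutment cohomology.

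For the first arrow $\Lambda C_{G}^*(X) \otimes \Lambda C_{G}^*(X) \longrightarrow \Lambda C_{G\times G}^*(X \times X)$, I would apply Theorem \ref{thequivcohomprod} with $G' = G$ and $Y = X$, which yields a filtered isomorphism in $H o \, \mathfrak{C}_+$. For the second arrow $\Lambda C_{G\times G}^*(X \times X) \longrightarrow \Lambda C_{G}^*(X \times X)$, induced by the diagonal group homomorphism $\delta : G \to G \times G$, I would invoke Proposition \ref{functgroup}, whose final paragraph already verifies that the natural transformation $T$ associated to a group morphism is filtered. For the third arrow $\Lambda C_{G}^*(X \times X) \longrightarrow \Lambda C_{G}^*(X)$, I would apply the functor $L^*$ to $\Delta^* : \mathcal{G}^{\bullet} C^*(X \times X) \to \mathcal{G}^{\bullet} C^*(X)$: this is a $G$-equivariant filtered morphism by functoriality of the dual geometric filtration with action (Definition \ref{defdugeofilact}, using that $\Delta$ is equivariant for the diagonal action), and the extension of $L^*$ to filtered complexes is a filtered functor.

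Taking the composition produces a morphism $\smile$ in $H o \, \mathfrak{C}_+$. To extract the stated morphism of spectral sequences, I would use the cohomological analog of Lemma 5.11 of \cite{LP} to identify the $E_r$ page of the spectral sequence of $\Lambda C_G^*(X) \otimes \Lambda C_G^*(X)$ with the bigraded tensor product $\bigoplus_{p+s=a,\,q+t=b} {}^G\!E^{p,q}_r(X) \otimes {}^G\!E^{s,t}_r(X)$; functoriality of the associated spectral sequence in filtered morphisms then yields the displayed map for every $r \geq 1$. Passing to the abutment, using convergence of the cohomological equivariant weight spectral sequence to $H^*(X;G)$ filtered by $\Omega$, gives a cup product on equivariant cohomology which is filtered with respect to $\Omega$. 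The main obstacle is merely bookkeeping, namely keeping track of the various actions (the diagonal $G$-action on $X \times X$ versus the $G \times G$-action) so that the second arrow is well-defined and filtered; once this is settled, the rest is the standard machinery of filtered complexes and their spectral sequences.
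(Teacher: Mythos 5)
Your proposal is correct and follows essentially the same route as the paper, which presents this theorem as an immediate consequence of the construction of $\smile$ as the composition of the three filtered morphisms (the Künneth isomorphism of Theorem \ref{thequivcohomprod}, the change-of-group morphism of Proposition \ref{functgroup} for $\delta$, and $L^*$ applied to the equivariant $\Delta^*$) and gives no separate proof. You merely make explicit the standard final step — identifying the $E_r$-page of the tensor product via the cohomological analog of Lemma 5.11 of \cite{LP} and invoking functoriality of spectral sequences of filtered complexes — which the paper leaves implicit.
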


\begin{rem} The cup product $\smile$ on the equivariant cohomology coincides with the cup product of \cite{VH} (III - 3 - (28)), at least for compact real algebraic $G$-varieties.
\end{rem}

The cup product on the cohomological equivariant weight spectral sequence and filtration is actually induced by the cup product on the cohomological weight spectral sequence through the Hochschild-Serre spectral sequences 
$${}^{p}\!E_2^{\alpha,\beta} = H^{\alpha}\left(G, E_1^{p, \beta}\right) \Rightarrow {}^G\!E_1^{p,\alpha + \beta} = H^{\alpha + \beta}\left(G, E_0^{p,*}\right), p \in \mathbb{Z}.$$

Indeed, consider the functor $L^*_G : \mathfrak{D}^G \rightarrow \mathfrak{D}_+$ induced by the bar resolution $B$ over $G$. If $K^* \in \mathfrak{D}^G$, we can consider the bounded filtration ${}_I\!F^{\bullet}$ on $L^*(K^*)$ which gives rise to the Hochschild-Serre spectral sequence ${}_{I}\!E_*$, and $L^*(K^*)$ can then be considered as a complex of $\mathfrak{C}_+$. We obtain a functor ${}_I\!L^* : \mathfrak{D}^G \rightarrow \mathfrak{C}_+$, which preserves quasi-isomorphisms (since ${}_{I}\!E_1^{p,q}  = Hom_G(B_p ,H^q(K^*))$) and then induces a functor  
$${}_I\!L^* : H o \, \mathfrak{D}^G \rightarrow H o \, \mathfrak{C}_+$$
If $p$ is an integer, the Hochschild-Serre spectral sequence ${}^p\!E_*$ is then the spectral sequence induced by the filtered complex ${}_I\!L^*(E_0^{p,*})$.

Now, fix two integers $p$ and $p'$. The isomorphism $\mathcal{G}^{\bullet} C^*(X) \otimes \mathcal{G}^{\bullet} C^*(X) \longrightarrow \mathcal{G}^{\bullet} C^*(X \times X)$ of $H o \, \mathfrak{C}^{G \times G}$ induces an isomorphism $E_0^{p,*}(X) \otimes E_0^{p',*}(X) \rightarrow E_0^{p+p',*}(X \times X)$ of $H o \, \mathfrak{D}^{G \times G}$. We apply the functor ${}_I\!L_{G \times G}^*$ to get a morphism 
\begin{equation} \label{morph1} {}_I\!L_{G}^*\left(E_0^{p,*}(X)\right) \otimes {}_I\!L_{G}\left(E_0^{p',*}(X)\right)  \rightarrow {}_I\!L_{G \times G}^*\left(E_0^{p,*}(X) \otimes E_0^{p',*}(X)\right) \rightarrow  {}_I\!L_{G \times G}^*\left(E_0^{p+p',*}(X \times X)\right) \tag{4.1} \end{equation} 
of $H o \, \mathfrak{C}_+$. The left-hand arrow is a natural isomorphism of $\mathfrak{C}_+$ given by the following lemma :

\begin{lem} \label{lemfI}  If $K^* \in \mathfrak{D}^G$ and $M^* \in \mathfrak{D}^{G'}$, there is a natural isomorphism
$${}_I\!L^*_G(K^*) \otimes {}_I\!L^*_{G'}(M^*) \rightarrow {}_I\!L^*_{G \times G'} (K^* \otimes M^*).$$
of filtered complexes of $\mathfrak{C}_+$.
\end{lem}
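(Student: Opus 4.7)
The plan is to build on the natural isomorphism of underlying cochain complexes given by the cohomological analog of Proposition \ref{isolprod}, and then verify that it is strictly compatible with the filtrations ${}_I\!F^{\bullet}$ giving rise to the Hochschild-Serre spectral sequences.

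First, I would recall that by construction ${}_I\!L^*_G(K^*)$ is the total complex of the double complex $(\Hom_G(B_p, K^q))_{p,q}$ equipped with the filtration by bar-resolution degree,
\[
{}_I\!F^{\alpha} L^k_G(K^*) = \bigoplus_{\substack{p+q=k \\ p \geq \alpha}} \Hom_G(B_p, K^q),
\]
and analogously for $G'$ and for $G \times G'$ using the product bar resolution $B \otimes B'$, whose term in total resolution degree $n$ is $\bigoplus_{p+p'=n} B_p \otimes B'_{p'}$. The cohomological counterpart of Proposition \ref{isolprod}, itself a direct application of Proposition \ref{prophomtens} to the finitely generated free modules $B_p$ and $B'_{p'}$, then yields a natural isomorphism of cochain complexes of $\mathfrak{D}_+$
\[
\psi^* : {}_I\!L^*_G(K^*) \otimes {}_I\!L^*_{G'}(M^*) \longrightarrow {}_I\!L^*_{G \times G'}(K^* \otimes M^*),
\]
defined summand-by-summand by $f \otimes f' \mapsto \{b \otimes b' \mapsto f(b) \otimes f'(b')\}$.

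Second, I would check that $\psi^*$ is strictly filtered. At level $\alpha$, the tensor product filtration on the left-hand side is spanned by the summands $\Hom_G(B_p, K^q) \otimes \Hom_{G'}(B'_{p'}, M^{q'})$ with $p + p' \geq \alpha$. Such a summand is sent by $\psi^*$ into $\Hom_{G \times G'}(B_p \otimes B'_{p'}, K^q \otimes M^{q'})$, which lies in ${}_I\!F^{\alpha} L^*_{G \times G'}(K^* \otimes M^*)$ because $B_p \otimes B'_{p'}$ has total resolution degree $p + p' \geq \alpha$ in $B \otimes B'$. Since $\psi^*$ is a bijection identifying these summands on both sides, the reverse inclusion also holds and $\psi^*$ is strictly filtered.

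Naturality in $K^*$ and $M^*$ is inherited directly from the naturality of the Hom-tensor comparison morphism in Proposition \ref{prophomtens}, since equivariant cochain morphisms act on $\psi^*$ only through post-composition of $f$ and $f'$. The main thing to keep track of will be the bookkeeping of the two filtration conventions (tensor product of filtrations on one side, filtration by total bar-resolution degree on the other); once the identifications of summands are written down carefully, there is no further obstacle. In particular, no choice of a comparison map between projective resolutions is needed, as we work throughout with the explicit bar resolutions $B$, $B'$ and $B \otimes B'$.
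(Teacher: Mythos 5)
Your proposal is correct and follows essentially the same route as the paper: the isomorphism is the one furnished by Proposition \ref{prophomtens} as in Proposition \ref{isolprod}, and the filtration check amounts to identifying the tensor-product filtration at level $\alpha$ with the span of the summands $\Hom_G(B_p,K^q)\otimes\Hom_{G'}(B'_{p'},M^{q'})$ having $p+p'\geq\alpha$, which is exactly what the paper's explicit reindexing computation establishes. Your summand-by-summand phrasing is a slightly more conceptual packaging of the same verification, with no gap.
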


\begin{proof} The morphism is given by proposition \ref{prophomtens}, as in the proof of proposition \ref{isolprod}. We check that the filtrations coincide on the two complexes above. On the one hand, we have

\begin{eqnarray*}
{}_I\!F^l (L^*_G(K^*) \otimes L^*_{G'}(M^*))^k & = & \bigoplus_{i + j = k} \sum_{ a + b = l} {}_I\!F^a L^i_G(K^*) \otimes {}_I\!F^b L^j_{G'}(M^*) \\
& = &  \bigoplus_{i + j = k} \sum_{ a + b = l} \left(\bigoplus_{r \geq a} Hom_G(B_r , K^{i-r})\right) \otimes \left(\bigoplus_{s \geq b} Hom_G(B'_s , M^{j-s})\right) \\
& \stackrel{\sim}{\longrightarrow} &  \bigoplus_{i + j = k} \sum_{ a + b = l} \bigoplus_{r \geq a} \bigoplus_{s \geq b} Hom_{G \times G'} (B_r \otimes B'_s, K^{i-r} \otimes M^{j-s}) \\
& = & \sum_{ a + b = l} \bigoplus_{r \geq a} \bigoplus_{s \geq b} \bigoplus_i  Hom_{G \times G'} (B_r \otimes B'_s, K^{i-r} \otimes M^{k-i-s}) \\
& = &  \sum_{ a} \bigoplus_{r \geq a} \bigoplus_{s \geq l-a } \bigoplus_{\gamma} Hom_{G \times G'} (B_r \otimes B'_s, K^{\gamma} \otimes M^{k-\gamma - r-s}) \\
& = &   \bigoplus_{\gamma} \sum_{ a} \bigoplus_{r \geq a} \bigoplus_{N \geq l +r-a } Hom_{G \times G'} (B_r \otimes B'_{N-r}, K^{\gamma} \otimes M^{k-N-\gamma}), \\
\end{eqnarray*}
and on the other hand,
\begin{eqnarray*}
{}_I\!F^l L^k_{G \times G'} (K^* \otimes M^*) & = & \bigoplus_{N \geq l} Hom_{G \times G'} \left( \bigoplus_{\alpha + \beta = N} B_{\alpha} \otimes B'_{\beta} , \bigoplus_{\gamma + \delta = k - N} K^{\gamma} \otimes M^{\delta}\right)  \\
& = & \bigoplus_{N \geq l} \bigoplus_{\alpha + \beta = N} \bigoplus_{\gamma + \delta = k - N} Hom_{G \times G'} (B_{\alpha} \otimes B'_{\beta} , K^{\gamma} \otimes M^{\delta}) \\
& = & \bigoplus_{N \geq l}  \bigoplus_{\alpha + \beta = N} \bigoplus_{\gamma} Hom_{G \times G'} (B_{\alpha} \otimes B'_{\beta} , K^{\gamma} \otimes M^{k-N- \gamma}) \\
& = & \bigoplus_{\gamma} \bigoplus_{N \geq l}  \bigoplus_{r}  Hom_{G \times G'} (B_{r} \otimes B'_{N-r} , K^{\gamma} \otimes M^{k-N- \gamma}). \\
\end{eqnarray*}
Since the two sums are equal, we get the result.

\end{proof}

We then use the fact that the morphism $T : L^*_{G'}(K^*) \longrightarrow L^*_{G}(K^*)$ of $\mathfrak{D}_+$ of proposition \ref{functgroup} is compatible with the filtrations ${}_I\!F^{\bullet}$. This provides a morphism 
\begin{equation} \label{morph2} {}_I\!L_{G \times G}^*\left(E_0^{p+p',*}(X \times X)\right) \rightarrow {}_I\!L_{G}^*\left(E_0^{p+p',*}(X \times X)\right) \tag{4.2} \end{equation}
of $H o \, \mathfrak{C}_+$, induced by the group homomorphism $\delta : G \rightarrow G \times G~;~g \mapsto (g,g)$. Finally, the diagonal map $\Delta : X \rightarrow X \times X$ induces a morphism
\begin{equation} \label{morph3} {}_I\!L_{G}^*\left(E_0^{p+p',*}(X \times X)\right) \rightarrow {}_I\!L_{G}^*\left(E_0^{p+p',*}(X)\right) \tag{4.3} \end{equation}
by functoriality.

\begin{defprop} We denote by $\smile$ the composition
$${}_I\!L_{G}^*\left(E_0^{p,*}(X)\right) \otimes {}_I\!L_{G}\left(E_0^{p',*}(X)\right) \longrightarrow {}_I\!L_{G}^*\left(E_0^{p+p',*}(X)\right)$$
of the morphisms (\ref{morph1}), (\ref{morph2}) and (\ref{morph3}) of $H o \, \mathfrak{C}_+$.

It induces well-defined morphisms
$$\smile \, : \bigoplus_{\alpha + \gamma= a , \beta + \delta = b} {}^p\!E_r^{\alpha, \beta} \otimes {}^{p'}\!E_r^{\gamma, \delta} \rightarrow {}^{p+p'}\!E_r^{a,b},$$
on the spectral sequences ${}^p\!E_*$, $p \in \mathbb{Z}$, from page $r = 1$, which induce the cup products 
$$\smile \,: {}^G\!E_1^{p, *} \otimes {}^G\!E_1^{p', *} \longrightarrow {}^G\!E_1^{p +p', *},$$
and the cup products on ${}^G\!E_r$, for $r \geq 1$, and $H^*(X ; G)$, since all the cup products $\smile$ are induced by the same morphisms $u^{\vee}$, $w$, $\delta$ and $\Delta$.
\end{defprop}

Inducing the equivariant weight spectral sequence's cup product from the weight spectral sequence's cup product will allow us to carry the usual properties of the cup product on the equivariant cup product :

\begin{theo} \label{comassoccupg} Let $r \geq 1$. The cup product
$$\smile \; : \bigoplus_{p+s = a, q+t = b} {}^G\!E^{p,q}_r(X) \otimes {}^G\!E^{s,t}_r(X) \longrightarrow {}^G\!E^{a,b}_r(X)$$
is commutative. 
\end{theo}

\begin{proof} 

The commutativity of the cup product 
$$\bigoplus_{p+s = a, q+t = b} E^{p,q}_r(X) \otimes E^{s,t}_r(X) \longrightarrow E^{a,b}_r(X)$$
on the cohomological weight spectral sequence (Proposition 5.20 of \cite{LP}) is given by the commutativity of the diagram

\begin{equation} \label{diagcom} \xymatrix{
E^{p,q}_r(X) \otimes E^{s,t}_r(X)  \ar[rr]^{\Phi} \ar[d]_{\left(u^{\vee}\right)^{-1} \circ w} && E^{s,t}_r(X) \otimes E^{p,q}_r(X) \ar[d]^{\left(u^{\vee}\right)^{-1} \circ w} \\
E^{a,b}_r(X \times X) \ar[rr]^{\phi^*}  \ar[rd]_{\Delta^*} && E^{a,b}_r(X \times X) \ar[ld]^{\Delta^*} \\
& E^{a,b}_r(X)
}
\tag{4.4}
\end{equation}
where 
\begin{itemize}
	\item $p+s = a$ and $q+t = b$, 
	\item the morphism $\Phi$ is defined by $\Phi(\varphi \otimes \psi) := \psi \otimes \varphi$,
	\item the morphism $\phi$ is defined by $\phi(x,x') := (x',x)$ if $(x,x') \in X \times X$. 
\end{itemize}

The commutativity of the lower part is induced by the equality $\phi \circ \Delta = \Delta$, while the upper part is induced by the commutative diagrams
$$\xymatrix{
C_*(X) \otimes C_*(X)  \ar[rr]^{\Phi} \ar[d]_{u} && C_*(X) \otimes C_*(X) \ar[d]^{u} \\
C_*(X \times X) \ar[rr]^{\phi_*} && C_*(X \times X) \\
}
$$
and
$$\xymatrix{ 
( C_*(X))^{\vee} \otimes ( C_*(X))^{\vee} \ar[rr]^{\Phi} \ar[d]_{w} &&  C_*(X))^{\vee} \otimes ( C_*(X))^{\vee} \ar[d]^{w} \\
( C_*(X) \otimes C_*(X) )^{\vee} \ar[rr]^{\Phi^{\vee}} && ( C_*(X) \otimes C_*(X) )^{\vee}
}$$
on the filtered chain level.

We then apply group cohomology to the diagram (\ref{diagcom}) for $r=1$ and we set up the following one

\begin{equation} \label{diagcomg} \hspace{-1.5cm} \xymatrix{
H^{\mu} (G , E^{p,q}_1(X)) \otimes H^{\rho}(G, E^{s,t}_1(X) ) \ar[rr]^{\Phi} \ar[d]_{K} & & H^{\rho}(G, E^{s,t}_1(X)) \otimes H^{\mu}( G, E^{p,q}_1(X)) \ar[d]^{K} \\
H^{\alpha} (G \times G , E^{p,q}_1(X) \otimes E^{s,t}_1(X) ) \ar[r]^{\Theta} \ar[d]_{\left(u^{\vee}\right)^{-1} \circ w} & H^{\alpha} (G \times G , E^{p,q}_1(X) \otimes E^{s,t}_1(X) ) \ar[r]^{\Phi_*} \ar[d]_{\left(u^{\vee}\right)^{-1} \circ w}  & H^{\alpha}(G \times G, E^{s,t}_1(X) \otimes E^{p,q}_1(X)) \ar[d]^{\left(u^{\vee}\right)^{-1} \circ w} \\
H^{\alpha}(G \times G , E^{a,b}_1(X \times X)) \ar[d]_T \ar[r]^{\Theta} & H^{\alpha}(G \times G , E^{a,b}_1(X \times X))  \ar[r]^{\phi^*} \ar[ld]_{T} & H^{\alpha}( G \times G, E^{a,b}_1(X \times X) ) \ar[d]^T\\
H^{\alpha}(G , E^{a,b}_1(X \times X)) \ar[rr]^{\phi^*}  \ar[rd]_{\Delta^*} && H^{\alpha}( G, E^{a,b}_1(X \times X)) \ar[ld]^{\Delta^*} \\
& H^{\alpha}(G, E^{a,b}_1(X))
}
\tag{4.5}
\end{equation} 
where
\begin{itemize}
\item $\mu + \rho = \alpha$,
\item the morphism 
$$K : H^{\mu} (G , E^{p,q}_1(X)) \otimes H^{\rho}(G, E^{s,t}_1(X) ) \rightarrow H^{\alpha} (G \times G , E^{p,q}_1(X) \otimes E^{s,t}_1(X) )$$
is the composition of the K\"unneth isomorphism of cochain complexes with the natural isomorphism of proposition \ref{isolprod} (we consider $E^{p,q}_1(X)$ and $E^{s,t}_1(X)$ as cochain complexes concentrated in $0$),
\item the morphisms $\Theta$ are induced by the morphism $\theta : F_{\mu} \otimes F_{\rho} \rightarrow F_{\rho} \otimes F_{\mu} ~;~x \otimes y \mapsto y \otimes x$ if $F$ is a projective resolution over $G$,
\item $T$ is the morphism given by proposition \ref{functgroup}, induced by the augmentation-preserving $G$-chain map $\tau : F \rightarrow F \otimes F$, which is itself induced by the group homomorphism $\delta : G \rightarrow G \times G$.
\end{itemize}

We show that the diagram (\ref{diagcomg}) is commutative. 

First, we have $T \circ \Theta = T$. Indeed, $\theta \circ \tau : F \rightarrow F \otimes F$ is also an augmentation-preserving $G$-chain map and therefore, by the uniqueness up to homotopy of such a map, there is an homotopy between $\tau$ and $\theta \circ \tau$, which induces an homotopy between $(\theta \circ \tau)^*$ and $\tau^* : Hom_G(F \otimes F, M) \rightarrow Hom_G(F,M)$, for any $\mathbb{Z}[G]$-module $M$. 

The other diagrams constituting the diagram (\ref{diagcomg}) are also commutative, thanks to the commutativity of the diagram (\ref{diagcom}) and the functoriality of group cohomology. 
\\

As a consequence, the cup product 
$$\smile \, : \bigoplus_{\alpha + \gamma= a , \beta + \delta = b} {}^p\!E_2^{\alpha, \beta} \otimes {}^{p'}\!E_2^{\gamma, \delta} \rightarrow {}^{p+p'}\!E_2^{a,b},$$
is commutative and so are the induced cup products on ${}^*\!E_r$, from $r = 2$, and ${}^G\!E_r$, from $r = 1$, since all the morphisms of the diagram (\ref{diagcomg}) are induced by morphisms defined on the filtered chain level (the differentials of the spectral sequences ${}^*\!E_r$ and ${}^G\!E_r$ are therefore compatible with these morphisms). 

\end{proof}

\begin{theo} \label{comassoc2cupg} Let $r \geq 1$. The cup product
$$\smile \; : \bigoplus_{p+s = a, q+t = b} {}^G\!E^{p,q}_r(X) \otimes {}^G\!E^{s,t}_r(X) \longrightarrow {}^G\!E^{a,b}_r(X)$$
is associative. 
\end{theo}

\begin{proof}

The associativity of the cup product on the cohomological weight spectral sequence is given by the commutative diagram

\begin{equation} \label{diagassoc} \hspace{-1.5cm} \xymatrix{
& E^{p_1,q_1}_r(X) \otimes E^{p_2,q_2}_r(X) \otimes E^{p_3,q_3}_r(X) \ar[ld]_{\left(u^{\vee}\right)^{-1} \circ w \otimes id} \ar[rd]^{id \otimes \left(u^{\vee}\right)^{-1} \circ w} & \\
E^{p_1+p_2,q_1+q_2}_r(X \times X) \otimes E^{p_3,q_3}_r(X) \ar[d]_{\Delta^* \otimes id} \ar[rd]^{\left(u^{\vee}\right)^{-1} \circ w} & & E^{p_1,q_1}_r(X) \otimes E^{p_2+p_3,q_2+q_3}_r(X \times X) \ar[d]^{id \otimes \Delta^*} \ar[ld]_{\left(u^{\vee}\right)^{-1} \circ w} \\
E^{p_1+p_2,q_1+q_2}_r(X) \otimes E^{p_3,q_3}_r(X) \ar[d]_{\left(u^{\vee}\right)^{-1} \circ w} & E^{p_1+p_2+p_3,q_1+q_2+q_3}_r(X \times X \times X) \ar[ld]_{(\Delta \times id_X)^*} \ar[rd]^{(id_X \times \Delta)^*}& E^{p_1,q_1}_r(X) \otimes E^{p_2+p_3,q_2+q_3}_r(X) \ar[d]^{\left(u^{\vee}\right)^{-1} \circ w} \\
E^{p_1+p_2+p_3,q_1+q_2+q_3}_r(X \times X) \ar[rd]_{\Delta^*} & & E^{p_1+p_2+p_3,q_1+q_2+q_3}_r(X \times X) \ar[ld]^{\Delta^*} \\
& E^{p_1+p_2+p_3,q_1+q_2+q_3}_r(X) & 
}
\tag{4.6}
\end{equation}
where, if $(x,x') \in X \times X$, $\Delta \times id_X(x,x') = (x,x,x') \in X \times X \times X$.

We first make precise why the diagram (\ref{diagassoc}) is indeed commutative. The commutativity of its upper part is given by the commutativity of the following diagram, on the filtered chain level,
\begin{equation*} \hspace{-1.5cm} \xymatrix{
& (C_*(X))^{\vee} \otimes (C_*(X))^{\vee} \otimes (C_*(X))^{\vee} \ar[ld]_{w \otimes id} \ar[rd]^{id \otimes w }
& \\
(C_*(X) \otimes C_*(X))^{\vee} \otimes (C_*(X))^{\vee} \ar[rd]^{\omega} & & (C_*(X))^{\vee} \otimes (C_*(X) \otimes C_*(X))^{\vee} \ar[ld]_{\omega'} \\
(C_*(X \times X))^{\vee} \otimes (C_*(X))^{\vee}  \ar[u]^{u^{\vee} \otimes id} \ar[d]^w & (C_*(X) \otimes C_*(X) \otimes C_*(X))^{\vee} & (C_*(X))^{\vee} \otimes (C_*(X \times X))^{\vee}  \ar[u]_{id \otimes u^{\vee}} \ar[d]_w \\
(C_*(X \times X) \otimes C_*(X))^{\vee} \ar[ur]^{(u \otimes id)^{\vee}}& & (C_*(X) \otimes C_*(X \times X))^{\vee} \ar[ul]_{(id \otimes u)^{\vee}}\\
& (C_*(X \times X \times X))^{\vee} \ar[ul]^{u^{\vee}} \ar[ur]_{u^{\vee}} &  
}
\end{equation*}
where $\omega(\varphi \otimes \psi)(c_1 \otimes c_2 \otimes c_3) = \varphi(c_1 \otimes c_2) \cdot \psi(c_3)$.

The lower part of (\ref{diagassoc}) is commutative by functoriality, since $(\Delta \times id_X) \circ \Delta = (id_X \times \Delta) \circ \Delta$. The commutativity of the left and right parts of (\ref{diagassoc}) is then given by the following property of the cohomological weight spectral sequence :

\begin{lem} \label{functprodcross} Let $f : Y \rightarrow Y'$ and $h : Z \rightarrow Z'$ be two morphisms of $\mathbf{Sch}_c(\mathbb{R})$. Then the following diagram of spectral sequences is commutative, from $r = 1$,
\begin{equation*} \xymatrix{
E_r^{p,q}(Y') \otimes E_r^{s,t}(Z') \ar[r]^{f^* \otimes h^*} \ar[d]^{(u^{\vee})^{-1} \circ w} & E_r^{p,q}(Y) \otimes E_r^{s,t}(Z) \ar[d]^{(u^{\vee})^{-1} \circ w} \\  
E_r^{p+s,q+t}(Y' \times Z') \ar[r]^{(f \times h)^*} & E_r^{p+s,q+t}(Y \times Z)
}
\end{equation*}

It is induced by the commutative diagrams 
\begin{equation*} \xymatrix{
C_*(Y) \otimes C_*(Z) \ar[r]^{f_* \otimes h_*} \ar[d]^{u} & C_*(Y') \otimes C_*(Z') \ar[d]^{u} \\  
C_*(Y \times Z) \ar[r]^{(f \times h)_*} & C_*(Y' \times Z')
}
\end{equation*}
and
\begin{equation*} \xymatrix{
(C_*(Y))^{\vee} \otimes (C_*(Z))^{\vee} \ar[d]^{w} & (C_*(Y'))^{\vee} \otimes (C_*(Z'))^{\vee}  \ar[l]_{f^* \otimes h^*} \ar[d]^{w} \\  
(C_*(Y) \otimes C_*(Z))^{\vee}  & (C_*(Y') \otimes C_*(Z'))^{\vee} \ar[l]_{(f_* \otimes h_*)^{\vee}}
}
\end{equation*}
on the filtered chain level.
\end{lem}

Now, we want to show the associativity of the cup product 
$$\smile \, : \bigoplus_{\alpha + \gamma= a , \beta + \delta = b} {}^p\!E_2^{\alpha, \beta} \otimes {}^{p'}\!E_2^{\gamma, \delta} \rightarrow {}^{p+p'}\!E_2^{a,b},$$
that is the commutativity of the following diagram (\ref{diagassocg0}). For the sake of readability, we denote 
\begin{itemize}
	\item $H^*_G(\cdot) := H^*(G, \cdot)$,
	\item $E_1 := E_1(X)$,
	\item $\rho_{i,j} := \rho_{i} + \rho_j$ and $\rho := \rho_1 + \rho_2 + \rho_3$ if $\rho = p,q$ or $\mu$.
\end{itemize}

\begin{equation}  \label{diagassocg0} {\tiny \hspace{-1cm} \xymatrix{
& H_G^{\mu_1}\left(E_1^{p_1,q_1}\right) \otimes H_G^{\mu_2}\left(E_1^{p_2,q_2}\right) \otimes H_G^{\mu_3}\left(E_1^{p_3,q_3}\right) \ar[dr]^{id \otimes K} \ar[ld]_{K \otimes id} & \\
H_{G \times G}^{\mu_{1,2}}\left(E_1^{p_1,q_1} \otimes E_1^{p_2,q_2}\right) \otimes H_G^{\mu_3}\left(E_1^{p_3,q_3}\right) \ar[d]_{\left(u^{\vee}\right)^{-1} \circ w \otimes id} & & H_G^{\mu_1}\left(E_1^{p_1,q_1}\right) \otimes H_{G \times G}^{\mu_{2,3}}\left(E_1^{p_2,q_2} \otimes E_1^{p_3,q_3}\right) \ar[d]^{id \otimes \left(u^{\vee}\right)^{-1} \circ w} \\
H_{G \times G}^{\mu_{1,2}}\left(E^{p_{1,2},q_{1,2}}_1(X \times X)\right) \otimes H_G^{\mu_3}\left(E_1^{p_3,q_3}\right) \ar[d]_{T \otimes id} & & H_G^{\mu_1}\left(E_1^{p_1,q_1}\right) \otimes H_{G \times G}^{\mu_{2,3}}\left(E^{p_{2,3},q_{2,3}}_1(X \times X)\right) \ar[d]^{id \otimes T} \\
H_G^{\mu_{1,2}}\left(E^{p_{1,2},q_{1,2}}_1(X \times X)\right) \otimes H_G^{\mu_3}\left(E_1^{p_3,q_3}\right) \ar[d]_{\Delta^* \otimes id} & & H_G^{\mu_1}\left(E_1^{p_1,q_1}\right) \otimes H_G^{\mu_{2,3}}\left(E^{p_{2,3},q_{2,3}}_1(X \times X)\right) \ar[d]^{id \otimes \Delta^*} \\
H_G^{\mu_{1,2}}\left(E^{p_{1,2},q_{1,2}}_1\right) \otimes H_G^{\mu_3}\left(E_1^{p_3,q_3}\right) \ar[d]_K & & H_G^{\mu_1}(E_1^{p_1,q_1}) \otimes H_G^{\mu_{2,3}}\left(E^{p_{2,3},q_{2,3}}_1\right) \ar[d]^K \\
H_{G \times G}^{\mu}\left(E^{p_{1,2},q_{1,2}}_1 \otimes E_1^{p_3,q_3}\right) \ar[d]_{\left(u^{\vee}\right)^{-1} \circ w} & & H_{G \times G}^{\mu}\left(E^{p_{1},q_{1}}_1 \otimes E_1^{p_{2,3},q_{2,3}}\right) \ar[d]^{\left(u^{\vee}\right)^{-1} \circ w} \\
H_{G \times G}^{\mu}\left(E_1^{p,q}(X \times X)\right) \ar[d]_T & & H_{G \times G}^{\mu}\left(E_1^{p,q}(X \times X)\right) \ar[d]^T \\
H_G^{\mu}\left(E_1^{p,q}(X \times X)\right) \ar[rd]^{\Delta^*} & & H_G^{\mu}\left(E_1^{p,q}(X \times X)\right) \ar[ld]_{\Delta^*} \\
& H_G^{\mu}\left(E_1^{p,q}\right) &
}
}
\tag{4.7}
\end{equation}

In order to prove that the diagram (\ref{diagassocg0}) is commutative, we fill it with commutative diagrams in the following way. We show up below the left part of the obtained diagram, the right part being symmetric :

\begin{landscape} 
\begin{equation} \label{diagassocg} {\tiny  \xymatrix{
&&& H_G^{\mu_1}\left(E_1^{p_1,q_1}\right) \otimes H_G^{\mu_2}\left(E_1^{p_2,q_2}\right) \otimes H_G^{\mu_3}\left(E_1^{p_3,q_3}\right) \ar[dl]_{K \otimes id} \\
& H_{G \times G}^{\mu_{1,2}}\left(E^{p_{1,2},q_{1,2}}_1(X \times X)\right) \otimes H_G^{\mu_3}\left(E_1^{p_3,q_3}\right) \ar[d]_{T \otimes id} & H_{G \times G}^{\mu_{1,2}}\left(E_1^{p_1,q_1} \otimes E_1^{p_2,q_2}\right) \otimes H_G^{\mu_3}\left(E_1^{p_3,q_3}\right) \ar[l]_{\raise0.3cm\hbox{$(u^{\vee})^{-1} \circ w \otimes id$}} \ar[r]^K \ar[d]^{T \otimes id} & H_{G \times G \times G}^{\mu} \left(E_1^{p_1,q_1} \otimes E_1^{p_2,q_2} \otimes E_1^{p_3,q_3}\right) \ar[dd]^{T_0} \ar[ddl]^{T_1} \\
H_G^{\mu_{1,2}}\left(E^{p_{1,2},q_{1,2}}_1\right) \otimes H_G^{\mu_3}\left(E_1^{p_3,q_3}\right) \ar[d]_K & H_G^{\mu_{1,2}}\left(E^{p_{1,2},q_{1,2}}_1(X \times X)\right) \otimes H_G^{\mu_3}\left(E_1^{p_3,q_3}\right) \ar[l]_{\raise0.3cm\hbox{$\Delta^* \otimes id$}} \ar[d]_K & H_{G}^{\mu_{1,2}}\left(E_1^{p_1,q_1} \otimes E_1^{p_2,q_2}\right) \otimes H_G^{\mu_3}\left(E_1^{p_3,q_3}\right) \ar[l]_{\raise0.3cm\hbox{$(u^{\vee})^{-1} \circ w \otimes id$}} \ar[d]^K & \\
H_{G \times G}^{\mu}\left(E^{p_{1,2},q_{1,2}}_1 \otimes E_1^{p_3,q_3}\right) \ar[dd]_{(u^{\vee})^{-1} \circ w} & H_{G \times G}^{\mu}\left(E^{p_{1,2},q_{1,2}}_1(X \times X) \otimes E_1^{p_3,q_3}\right) \ar[l]_{\Delta^* \otimes id} \ar[dd]_{(u^{\vee})^{-1} \circ w} \ar[rd]_T & H_{G \times G}^{\mu}\left(E_1^{p_1,q_1} \otimes E_1^{p_2,q_2} \otimes E_1^{p_3,q_3}\right) \ar[l]_{(u^{\vee})^{-1} \circ w \otimes id} \ar[r]^T & H_{G}^{\mu}\left(E_1^{p_1,q_1} \otimes E_1^{p_2,q_2} \otimes E_1^{p_3,q_3}\right) \ar[ld]^{(u^{\vee})^{-1} \circ w \otimes id} \\
& & H_G^{\mu}\left(E^{p_{1,2},q_{1,2}}_1(X \times X) \otimes E_1^{p_3,q_3}\right) \ar[rd]_{(u^{\vee})^{-1} \circ w} & \\
H_{G \times G}^{\mu}\left(E_1^{p,q}(X \times X)\right) \ar[d]_T  & H^{\mu}_{G \times G}\left(E_1^{p,q}(X \times X \times X)\right) \ar[l]_{\Delta_1^*} \ar[rr]^T & & H_G^{\mu}\left(E_1^{p,q}(X \times X \times X)\right) \ar[d]^{\Delta_0^*} \ar[llld]_{\Delta_1^*} \\
H_G^{\mu}\left(E_1^{p,q}(X \times X)\right) \ar[rrr]_{\Delta^*} & & & H_G^{\mu}\left(E_1^{p,q}\right)
}}
\tag{4.8}
\end{equation}
\end{landscape}
~~\\
In the diagram (\ref{diagassocg}) above and in its symmetric,
\begin{itemize}
	\item for $(x,y) \in X \times X$, $\Delta_1(x,y) := (x,x,y) \in X \times X \times X$ and $\Delta_2(x,y) := (x,y,y) \in X \times X \times X$,
	\item for $x \in X$, $\Delta_0(x) := (x,x,x)$,
	\item $T_0$ is the morphism given by proposition \ref{functgroup}, induced by the augmentation-preserving $G$-chain map $\tau_0 : F \rightarrow F \otimes F \otimes F$,  which is itself induced by the group homomorphism $G \rightarrow G \times G \times G~;~g \mapsto (g,g,g)$,
	\item $T_1$ is the morphism given by proposition \ref{functgroup}, induced by the augmentation-preserving $G \times G$-chain map $\tau_1 : F \otimes F \rightarrow (F \otimes F) \otimes F$, which is itself induced by the group homomorphism $G \times G \rightarrow (G \times G) \times G~;~(g,g') \mapsto (g,g,g')$,
	\item $T_2$ is the morphism given by proposition \ref{functgroup}, induced by the augmentation-preserving $G \times G$-chain map $\tau_2 : F \otimes F \rightarrow F \otimes (F \otimes F)$, which is itself induced by the group homomorphism $G \times G \rightarrow G \times (G \times G)~;~(g,g') \mapsto (g,g',g')$. 
\end{itemize}
Notice that, by uniqueness up to homotopy of the augmentation-preserving $G \times G$-chain map $F \otimes F \rightarrow (F \otimes F) \otimes F$ induced by the group homomorphism $G \times G \rightarrow (G \times G) \times G~;~(g,g') \mapsto (g,g,g')$, $\tau_1$ is homotopic to $\tau \otimes id$. In the same manner, $\tau_2$ is homotopic to $id \otimes \tau$. Furthermore, $\tau_0$ is homotopic to $\tau_1 \circ \tau$ and $\tau_2 \circ \tau$, by uniqueness up to homotopy of the augmentation-preserving $G$-chain map $F \rightarrow F \otimes F \otimes F$ induced by the group homomorphism $G \rightarrow G \times G \times G~;~g \mapsto (g,g,g)$.
\\

Finally, the diagrams
{\tiny
\begin{equation*}  \hspace{-1cm} \xymatrix{
& H_G^{\mu_1}\left(E_1^{p_1,q_1}\right) \otimes H_G^{\mu_2}\left(E_1^{p_2,q_2}\right) \otimes H_G^{\mu_3}\left(E_1^{p_3,q_3}\right) \ar[dl]_{K \otimes id} \ar[dr]^{id \otimes K} & \\
H_{G \times G}^{\mu_{1,2}}\left(E_1^{p_1,q_1} \otimes E_1^{p_2,q_2}\right) \otimes H_G^{\mu_3}\left(E_1^{p_3,q_3}\right) \ar[r]^K & H_{G \times G \times G}^{\mu} \left(E_1^{p_1,q_1} \otimes E_1^{p_2,q_2} \otimes E_1^{p_3,q_3}\right) & H_G^{\mu_1}\left(E_1^{p_1,q_1}\right) \otimes H_{G \times G}^{\mu_{2,3}}\left(E_1^{p_2,q_2} \otimes E_1^{p_3,q_3}\right) \ar[l]_K
}
\end{equation*}
}
and
{\tiny
\begin{equation*} \hspace{-1cm} \xymatrix{
& H_{G}^{\mu}\left(E_1^{p_1,q_1} \otimes E_1^{p_2,q_2} \otimes E_1^{p_3,q_3}\right) \ar[ld]^{(u^{\vee})^{-1} \circ w \otimes id} \ar[rd]_{id \otimes (u^{\vee})^{-1} \circ w} & \\
H_G^{\mu}\left(E^{p_{1,2},q_{1,2}}_1(X \times X) \otimes E_1^{p_3,q_3}\right) \ar[rd]_{(u^{\vee})^{-1} \circ w} & & H_G^{\mu}\left(E_1^{p_1,q_1} \otimes E^{p_{2,3},q_{2,3}}_1(X \times X) \right) \ar[ld]^{(u^{\vee})^{-1} \circ w} \\
& H_G^{\mu}\left(E_1^{p,q}(X \times X \times X)\right) &
}
\end{equation*}
}
~~\\
connecting the left and right parts of diagram (\ref{diagassocg0}) are also commutative : the first one by associativity of the K\"unneth isomorphism and the second one by functoriality of group cohomology applied to the topmost part of the diagram (\ref{diagassoc})).
\\

The associativity of the cup product 
$$\smile \, : \bigoplus_{\alpha + \gamma= a , \beta + \delta = b} {}^p\!E_2^{\alpha, \beta} \otimes {}^{p'}\!E_2^{\gamma, \delta} \rightarrow {}^{p+p'}\!E_2^{a,b}$$
then induces the associativity of the cup products on ${}^*\!E_r$, $r \geq 2$, and on ${}^G\!E_r$, $r \geq 1$, by the same arguments as for the commutativity. 
\end{proof}

We conclude this section by showing that the cup product is functorial in the following meaning :

\begin{theo} \label{functcupg} Let $Y$ be a real algebraic $G$-variety and $f : X \rightarrow Y$ an equivariant morphism. For all $r \geq 1$ and all $p,q,s,t \in \mathbb{Z}$, the diagram
\begin{equation*} \xymatrix{
{}^G\!E_r^{p,q}(Y) \otimes {}^G\!E_r^{s,t}(Y) \ar[r]^{f^* \otimes f^*} \ar[d]_{\smile} & {}^G\!E_r^{p,q}(X) \otimes {}^G\!E_r^{s,t}(X) \ar[d]^{\smile} \\  
{}^G\!E_r^{p+s,q+t}(Y) \ar[r]^{f^*} & {}^G\!E_r^{p+s,q+t}(X)
}
\end{equation*}
is commutative.
\end{theo}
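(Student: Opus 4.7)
The plan is to follow the same strategy as in the proofs of Theorems \ref{comassoccupg} and \ref{comassoc2cupg}: decompose the equivariant cup product into its three constituent morphisms — the K\"unneth isomorphism of Theorem \ref{thequivcohomprod}, the change-of-group morphism $T$ of Proposition \ref{functgroup} induced by $\delta : G \to G \times G$, and the diagonal pullback $\Delta^*$ — and check that each of them is natural with respect to the equivariant morphism $f$. Since $f$ is assumed equivariant, its pullback $f^*$ is a morphism of $\mathfrak{C}^G$ (as is $(f \times f)^*$ for the diagonal action of $G$ on $X \times X$ and $Y \times Y$), and the functor $L^*$ can be applied to it.

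First, I would establish the analogous statement on the non-equivariant cohomological weight spectral sequence, namely that for all $r \geq 1$ the square
\begin{equation*}
\xymatrix{
E_r^{p,q}(Y) \otimes E_r^{s,t}(Y) \ar[r]^{f^* \otimes f^*} \ar[d]_{\smile} & E_r^{p,q}(X) \otimes E_r^{s,t}(X) \ar[d]^{\smile} \\
E_r^{p+s,q+t}(Y) \ar[r]^{f^*} & E_r^{p+s,q+t}(X)
}
\end{equation*}
commutes. The K\"unneth half of this square comes from Lemma \ref{functprodcross} applied to the pair of morphisms $(f,f)$, while the diagonal half follows by functoriality from the equality $(f \times f) \circ \Delta_X = \Delta_Y \circ f$, yielding $\Delta_X^* \circ (f \times f)^* = f^* \circ \Delta_Y^*$ on each ${}^G\!E_r$.

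Next, exactly as in the proof of Theorem \ref{comassoccupg}, I would apply the functor ${}_I\!L^*_{G \times G}$, followed by the change-of-group morphism $T$ and by $\Delta^*$, to the above commutative square. The three required naturality inputs are: naturality of the K\"unneth isomorphism of Proposition \ref{isolprod} (and of its filtered refinement Proposition \ref{prodlhom}) with respect to $f^* \otimes f^*$; naturality of the transformation $T : L^*_{G \times G}(K^*) \to L^*_G(K^*)$ of Proposition \ref{functgroup} with respect to the equivariant morphism $(f \times f)^*$, which holds because $T$ is defined as right-composition with a fixed augmentation-preserving $G$-chain map $\tau : F \to F \otimes F$ and therefore commutes automatically with any morphism acting on the coefficient complex; and functoriality of $H^*(G, -)$ applied to the $G$-equivariant cochain morphism induced by $f$. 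Assembling these commutative squares in the style of the diagram (\ref{diagcomg}) one concludes that the diagram
$$\xymatrix{
\bigoplus {}^p\!E_2^{\alpha,\beta}(Y) \otimes {}^{p'}\!E_2^{\gamma,\delta}(Y) \ar[r]^{f^* \otimes f^*} \ar[d]_{\smile} & \bigoplus {}^p\!E_2^{\alpha,\beta}(X) \otimes {}^{p'}\!E_2^{\gamma,\delta}(X) \ar[d]^{\smile} \\
{}^{p+p'}\!E_2^{a,b}(Y) \ar[r]^{f^*} & {}^{p+p'}\!E_2^{a,b}(X)
}$$
commutes.

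Finally, since all the morphisms involved are induced from morphisms defined on the filtered cochain level, they are compatible with the differentials of the auxiliary spectral sequences ${}^*\!E_r$ and of the equivariant weight spectral sequences ${}^G\!E_r$. Consequently the commutativity propagates to give the result on ${}^G\!E_r$ for every $r \geq 1$, and in the limit on the equivariant cohomology $H^*(-;G)$. The main obstacle here is purely organizational — managing the nested diagrams that arise when $L^*$, the Hochschild--Serre spectral sequences ${}^p\!E_*$, and the three natural transformations composing $\smile$ are stacked together — rather than conceptual, since the equivariance of $f$ means that $f^*$ interacts cleanly with each natural transformation and no further compatibility needs to be verified.
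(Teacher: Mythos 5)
Your proposal follows essentially the same route as the paper: first establish the functoriality square on the non-equivariant weight spectral sequences via Lemma \ref{functprodcross} and the identity $(f\times f)\circ\Delta = \Delta\circ f$, then apply group cohomology and assemble the commutative diagram on ${}^*\!E_2$ using the naturality of $K$, $T$ and $\Delta^*$, and finally propagate to ${}^G\!E_r$ for all $r\geq 1$ since everything is induced on the filtered chain level. This matches the paper's proof, which does exactly this (the paper simply exhibits the resulting diagram without spelling out the naturality of $T$ that you make explicit).
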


\begin{rem} It is just the usual formula
$$f^*(c \smile c') = f^*(c) \smile f^*(c')$$ 
of cup product.
\end{rem}

\begin{proof}
The functoriality of the cup on the cohomological weight spectral sequence is given by the commutativity of the diagram 

\begin{equation*} \xymatrix{
E_r^{p,q}(Y) \otimes E_r^{s,t}(Y) \ar[r]^{f^* \otimes f^*} \ar[d]_{(u^{\vee})^{-1} \circ w} & E_r^{p,q}(X) \otimes E_r^{s,t}(X) \ar[d]^{(u^{\vee})^{-1} \circ w} \\  
E_r^{p+s,q+t}(Y \times Y) \ar[r]^{(f \times f)^*} \ar[d]_{\Delta^*} & E_r^{p+s,q+t}(X \times X) \ar[d]^{\Delta^*} \\
E_r^{p+s,q+t}(Y) \ar[r]^{f^*}  & E_r^{p+s,q+t}(X)
}
\end{equation*}
for $r \geq 1$ (it is commutative thanks to lemma \ref{functprodcross} and the equality $\Delta \circ f = (f \times f) \circ \Delta$).

The commutative diagram establishing the functoriality of the cup product on ${}^*\!E_2$ is then  

\begin{equation*} \hspace{-1.5cm} \xymatrix{
H^{\mu} (G , E^{p,q}_1(Y)) \otimes H^{\rho}(G, E^{s,t}_1(Y) ) \ar[r]^{f^* \otimes f^*} \ar[d]_{K} & H^{\mu}(G, E^{p,q}_1(X)) \otimes H^{\rho}(G, E^{s,t}_1(X)) \ar[d]^{K} \\
H^{\alpha} (G \times G , E^{p,q}_1(Y) \otimes E^{s,t}_1(Y) ) \ar[d]_{\left(u^{\vee}\right)^{-1} \circ w} \ar[r]^{f^* \otimes f^*} & H^{\alpha}(G \times G, E^{p,q}_1(X) \otimes E^{s,t}_1(X)) \ar[d]^{\left(u^{\vee}\right)^{-1} \circ w} \\
H^{\alpha}(G \times G , E^{a,b}_1(Y \times Y)) \ar[d]_T \ar[r]^{(f \times f)^*} & H^{\alpha}( G \times G, E^{a,b}_1(X \times X) ) \ar[d]^T\\
H^{\alpha}(G , E^{a,b}_1(Y \times Y)) \ar[r]^{(f \times f)^*}  \ar[d]_{\Delta^*} & H^{\alpha}( G, E^{a,b}_1(X \times X)) \ar[d]^{\Delta^*} \\
H^{\alpha}(G, E^{a,b}_1(Y)) \ar[r]^{f^*} & H^{\alpha}(G, E^{a,b}_1(X))
}
\end{equation*}  
(with $\alpha := \mu + \rho$, $a := p + s$ and $b := q + t$).

\end{proof}

\begin{rem} The commutativity, the associativity and the functoriality of the cup product on ${}^G\!E_r$, $r \geq \,1$, induces the commutativity, the associativity and the functoriality of the cup product on the equivariant cohomology $H^*(X ; G)$.
\end{rem}

\subsection{Equivariant cap product} \label{subsectcap}

In this last paragraph, we define a cap product on the Hochschild-Serre spectral sequences ${}^*\!E_r$ and ${}^*\!E^r$, $r \geq 1$, of $X$, induced by the cap product on its weight spectral sequences (see \cite{LP} Propositions 5.20). This equivariant cap product will induce one on its equivariant weight spectral sequences, which itself induces a cap product on the equivariant cohomology and homology of $X$, showing in particular that this last cap product is filtered with respect to the equivariant weight filtrations. Similarly to what we did for the equivariant cup product, we carry the usual properties of cap product from the weight spectral sequences to the equivariant weight spectral sequences.
\\

We consider the definition of the cap product given in Remark 5.24 of \cite{LP} : if $h$ denotes the filtered morphism of $\mathcal{C}$
$$\begin{array}{ccc}
\mathcal{G}^{\bullet} C^*(X) \otimes (\mathcal{G}_{\bullet} C_*(X) \otimes \mathcal{G}_{\bullet} C_*(X)) & \rightarrow & \mathcal{G}_{\bullet} C_*(X) \\
\varphi \otimes (a \otimes b) & \mapsto & \varphi(a) \cdot b
\end{array}$$
(see Definition 5.21 of \cite{LP} for the definition of the tensor product of a complex of $\mathfrak{C}$, resp. $\mathfrak{C}_+$, and a complex of $\mathcal{C}$, resp. $\mathcal{C}_-$), then the cap product of $X$ can be defined in the localized category $H o \, \mathfrak{C}$ as the composition

$$\hspace{-1cm} \frown \, : \mathcal{G}^{\bullet} C^*(X) \otimes \mathcal{G}_{\bullet} C_*(X) \xrightarrow{id \otimes \Delta_*}  \mathcal{G}^{\bullet} C^*(X) \otimes \mathcal{G}_{\bullet} C_*(X \times X) \xrightarrow{ id \otimes u^{-1} } \mathcal{G}^{\bullet} C^*(X) \otimes (\mathcal{G}_{\bullet} C_*(X) \otimes \mathcal{G}_{\bullet} C_*(X)) \xrightarrow{h} \mathcal{G}_{\bullet} C_*(X).$$

If we take into account the action of $G$ on $X$, the leftmost morphism $id \otimes \Delta_*$ induces a morphism
$$\Lambda C^*_G(X) \otimes \Lambda C_*^G(X) \rightarrow \Lambda C^*_G(X) \otimes \Lambda C_*^G(X \times X).$$
Furthermore, if $K^*$ is cochain complex of $\mathfrak{C}^G$ and $M_*$ is a chain complex of $\mathcal{C}^{G'}$, then the tensor product of $K^* \otimes M_*$ can be naturally considered as an element of $\mathcal{C}^{G \times G'}$ and we have a natural isomorphism of $\mathcal{C_-}$
$$L_G^{B}(K^*) \otimes L^{G'}_{B'}(M_*) \rightarrow L_{B \otimes B'}^{G \times G'}(K^* \otimes M_*),$$
so that the group homomorphism $\delta : G \rightarrow G \times G~;~g \mapsto (g,g)$ induces (by the homological version of proposition \ref{functgroup}) a morphism
$$\Lambda C^*_G(X) \otimes \Lambda C_*^G(X \times X) \xrightarrow{\cong} L_*^{G \times G} (\mathcal{G}^{\bullet} C^*(X) \otimes \mathcal{G}_{\bullet} C_*(X \times X)) \rightarrow L_*^G(\mathcal{G}^{\bullet} C^*(X) \otimes \mathcal{G}_{\bullet} C_*(X \times X)).$$
Finally, since the above morphism $id \otimes u^{-1}$ and $h$ are equivariant with respect to the diagonal actions of $G$, they induce morphisms
$$L_*^G(\mathcal{G}^{\bullet} C^*(X) \otimes \mathcal{G}_{\bullet} C_*(X \times X)) \rightarrow L_*^G(\mathcal{G}^{\bullet} C^*(X) \otimes (\mathcal{G}_{\bullet} C_*(X) \otimes \mathcal{G}_{\bullet} C_*(X))) \rightarrow \Lambda C^G_*(X),$$
and we call cap product, and denote by $\frown$, the composition
$$\Lambda C^*_G(X) \otimes \Lambda C_*^G(X) \rightarrow \Lambda C^*_G(X) \otimes \Lambda C_*^G(X \times X) \rightarrow L_*^G(\mathcal{G}^{\bullet} C^*(X) \otimes \mathcal{G}_{\bullet} C_*(X \times X)) \rightarrow \Lambda C^G_*(X).$$

\begin{theo} \label{theoequivcapprod} The cap product
$$\frown \; : \Lambda C_{G}^*(X) \otimes \Lambda C^{G}_*(X) \longrightarrow \Lambda C^{G}_*(X)$$
in $H o \, \mathcal{C}_-$
induces a morphism of spectral sequences
$$\frown \; : \bigoplus_{s-p = a, t-q = b} {}^G\!E^{p,q}_r(X) \otimes {}^G\!E_{s,t}^r(X) \longrightarrow {}^G\!E_{a,b}^r(X)$$
for $r \geq 1$, and a cap product in equivariant cohomology and homology
$$\frown \; : H^*(X ; G) \otimes H_*(X ; G) \longrightarrow H_*(X;G)$$
which is filtered with respect to the cohomological and homological equivariant weight filtrations.
\end{theo}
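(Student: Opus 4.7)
The plan is to establish that the composition $\frown$ displayed just before the theorem is a filtered morphism in $Ho \, \mathcal{C}_-$, after which the three conclusions follow by standard spectral sequence formalism, in close parallel with the treatment of the cup product in Theorem \ref{theoequivcupprod}. The composition has three stages: (i) the morphism $id \otimes \Delta_*$ induced by the equivariant diagonal $\Delta : X \to X \times X$ at the level of $\mathcal{G}_\bullet C_*$, which is filtered and equivariant since $\Delta$ is equivariant; (ii) the morphism obtained by inverting the homological K\"unneth-type filtered quasi-isomorphism $u$ of Proposition \ref{uequiv} (tensored with the identity of $\mathcal{G}^\bullet C^*(X)$), followed by the natural filtered isomorphism of $\mathcal{C}_-$ of the form $L^G_*(K^*) \otimes L^{G'}_*(M_*) \to L^{G \times G'}_*(K^* \otimes M_*)$ in the mixed cochain-chain setting, combined with the transfer morphism $T$ of Proposition \ref{functgroup} associated to $\delta : G \to G \times G$; and (iii) the morphism $L^G_*(h)$ induced by the evaluation map $h$, which is already filtered at the chain level by the corresponding non-equivariant construction in \cite{LP}.

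Once $\frown$ is established as a filtered morphism of $Ho \, \mathcal{C}_-$, general spectral sequence theory yields a morphism between the associated spectral sequences from page one onward, compatible with the differentials. With the standard bidegree convention for the cap product, this delivers exactly the morphism
$$\frown \, : \bigoplus_{s-p = a,\ t-q = b} {}^G\!E_r^{p,q}(X) \otimes {}^G\!E^r_{s,t}(X) \longrightarrow {}^G\!E^r_{a,b}(X)$$
for each $r \geq 1$, coinciding at $r = 1$ with the cap product read directly off the cohomological and homological equivariant geometric filtrations. Passing to the abutment gives the cap product
$$\frown \, : H^*(X;G) \otimes H_*(X;G) \to H_*(X;G),$$
and the fact that $\frown$ is filtered at the chain level forces this (co)homological cap product to send $\Omega^p H^k(X;G) \otimes \Omega_s H_n(X;G)$ into $\Omega_{s-p} H_{n-k}(X;G)$, which is the claimed compatibility with the equivariant weight filtrations.

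The main obstacle is the careful filtered bookkeeping in stage (ii): one must verify that the natural map $L^G_*(K^*) \otimes L^{G'}_*(M_*) \to L^{G \times G'}_*(K^* \otimes M_*)$ in the mixed cochain-chain context is a filtered morphism of $\mathcal{C}_-$ with respect to the tensor product filtrations induced from both the internal filtrations of $K^*$ and $M_*$ and from the bar resolution filtrations. This is the homological analogue of Proposition \ref{prodlhom} and Lemma \ref{lemfI} applied to a tensor product as in Definition 5.21 of \cite{LP}, and the verification is a direct computation in the spirit of the proof of Proposition \ref{prodlhom}, using that the bar resolutions $B$ and $B'$ are finitely generated in each degree so that $\Hom_G(B, -)$ commutes with the relevant direct sums. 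Once this check is complete, the three assertions of the theorem follow formally, exactly as the filtered character of the cup product yielded Theorem \ref{theoequivcupprod}.
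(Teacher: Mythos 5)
Your proposal is correct and follows essentially the same route as the paper: the paper's "proof" of this theorem is precisely the construction preceding it, which factors the cap product into the three stages you describe (the equivariant $id\otimes\Delta_*$, the mixed cochain--chain analogue of Propositions \ref{isolprod} and \ref{prodlhom} combined with the morphism $T$ of Proposition \ref{functgroup} for $\delta : G \to G\times G$, and the equivariant filtered maps $id\otimes u^{-1}$ and $h$), after which the spectral sequence and abutment statements follow formally since every constituent is defined at the filtered chain level. Your identification of the filtered bookkeeping for $L^G_*(K^*)\otimes L^{G'}_*(M_*)\to L^{G\times G'}_*(K^*\otimes M_*)$ in the mixed setting as the one point requiring verification matches exactly what the paper invokes.
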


\begin{rem} We do not know if the above cap product on equivariant cohomology and homology is the same as the one of \cite{VH} (III - 3 - (31)). 
\end{rem}

We now induce the cap product from the cohomological and homological weight spectral sequences through the Hochschild-Serre spectral sequences ${}^*\!E_r$ and ${}^*\!E^r$, $r \geq 1$.

First, we define the homological analog ${}_I\!L_* : \mathcal{D}^G \rightarrow \mathcal{C}_-$ of the functor ${}_I\!L^*$, and extend it into a functor $H o \, \mathcal{D}^G \rightarrow H o \, \mathcal{C}_-$ : it associates to a complex $K_*$ of $\mathcal{D}^G$ the complex $L_*(K_*)$ equipped with the filtration ${}_I\!F_{\bullet}$ which induces the Hochschild-Serre spectral sequence ${}_{I}\!E_1^{p,q}  = Hom_G(F_{-p} ,H^q(K_*)) \Rightarrow H_{p+q} (L_*(K_*)) = H_{p+q}(G, K_*)$.

We then apply this functor ${}_I\!L_*$ to the equivariant morphism $\Delta_*$, which induces a morphism
\begin{equation*}  {}_I\!L_{G}^*\left(E_0^{p,*}(X)\right) \otimes {}_I\!L^{G}_*\left(E^0_{p',*}(X)\right)  \rightarrow  {}_I\!L_{G}^*\left(E_0^{p,*}(X)\right) \otimes {}_I\!L^{G}_*\left(E^0_{p',*}(X \times X)\right), \end{equation*}
and we use a natural isomorphism of $\mathcal{C}_-$ similar to the one in lemma \ref{lemfI} to induce, via $\delta$, a second morphism
$$\hspace{-1.5cm} {}_I\!L_{G}^*\left(E_0^{p,*}(X)\right) \otimes {}_I\!L^{G}_*\left(E^0_{p',*}(X \times X)\right) \xrightarrow{\cong} {}_I\!L^{G \times G}_*\left(E_0^{p,*}(X) \otimes E^0_{p',*}(X \times X) \right) \rightarrow {}_I\!L^{G}_*\left(E_0^{p,*}(X) \otimes E^0_{p',*}(X \times X) \right).$$
Finally, $h \circ (id \otimes u^{-1})$ induces a third morphism
$${}_I\!L^{G}_*\left(E_0^{p,*}(X) \otimes E^0_{p',*}(X \times X) \right) \rightarrow {}_I\!L^{G}_*\left(E^0_{p'-p,*}(X)\right),$$
and we obtain a cap product
$$\frown \, : {}_I\!L_{G}^*\left(E_0^{p,*}(X)\right) \otimes {}_I\!L^{G}_*\left(E^0_{p',*}(X)\right) \rightarrow {}_I\!L^{G}_*\left(E^0_{p'-p,*}(X)\right)$$
in $H o \, \mathcal{C}_-$ which induces well-defined cap products 
$$\frown \, : \bigoplus_{\gamma - \alpha = a , \delta - \beta = b} {}^p\!E_r^{\alpha, \beta} \otimes {}^{p'}\!E^r_{\gamma, \delta} \rightarrow {}^{p'- p}\!E^r_{a,b},$$
on the spectral sequences ${}^*\!E_*$ and ${}^*\!E^*$, from page one, which induce the cap products 
$${}^G\!E_1^{p, *} \otimes {}^G\!E^1_{p', *} \rightarrow {}^G\!E^1_{p' - p, *}$$
and the cap products on ${}^G\!E_r$ and ${}^G\!E^r$, for $r \geq 1$ and the cap product on $H^*(X ; G)$ and $H_*(X ; G)$ (all the cap products are induced by the morphisms $\Delta$, $\delta$, $u$ and $h$).
\\

We will thereafter induce the usual properties of cap product from the weight spectral sequences onto the equivariant ones. Notice that, since there is no duality between the cohomological and homological equivariant spectral sequences (remark \ref{remnodualg}), we have no hope to obtain a formula of the type $\psi(\varphi \frown c) = (\psi \smile \varphi)(c)$. Nevertheless, such a property is verified for the weight spectral sequences and we will use it to establish the other formulae of the cap product :

\begin{prop} \label{propwssformun} Let $r \geq 1$ and let $p,q,s,t \in \mathbb{Z}$. Now, let $\varphi \in E_r^{p,q}$ and $c \in E^r_{s,t}$. Then $\varphi \frown c$ is the unique element of $E^r_{s-p,t-q}$ such that for all $\psi \in E_r^{s-p,t-q} = (E^r_{s-p,t-q})^{\vee}$, 
$$\psi(\varphi \frown c) = (\psi \smile \varphi)(c).$$
\end{prop}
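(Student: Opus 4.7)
The plan is to split the statement into existence and uniqueness. For existence, I would verify the formula $\psi(\varphi \frown c) = (\psi \smile \varphi)(c)$ at the filtered chain level and then transport it to the pages of the spectral sequences. For uniqueness, I would invoke the duality between the cohomological and homological weight spectral sequences proved in \cite{LP}.

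For the chain-level step I would unfold the definitions of $\smile$ and $\frown$ recalled in the excerpt (and in Remark 5.24 of \cite{LP}). Picking a representative of $u^{-1}(\Delta_* c)$ in $\mathcal{G}_{\bullet} C_*(X) \otimes \mathcal{G}_{\bullet} C_*(X)$ of the form $\sum_i a_i \otimes b_i$, the formula for the cap product gives $\varphi \frown c = h(\varphi \otimes u^{-1}(\Delta_* c)) = \sum_i \varphi(a_i) \cdot b_i$, so that $\psi(\varphi \frown c) = \sum_i \varphi(a_i)\psi(b_i)$. On the other hand, since $(u^{\vee})^{-1}$ is the dual of $u^{-1}$, we have $((u^{\vee})^{-1}(w(\psi \otimes \varphi)))(\Delta_* c) = w(\psi \otimes \varphi)(u^{-1}(\Delta_* c))$, hence
\[ (\psi \smile \varphi)(c) = \Delta^*\bigl((u^{\vee})^{-1}(w(\psi \otimes \varphi))\bigr)(c) = \sum_i \psi(a_i)\varphi(b_i). \]
Commutativity of multiplication in $\mathbb{Z}_2$ yields the equality at the chain level. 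All the morphisms involved ($\Delta_*$, $u$, $w$, $h$, $\Delta^*$) are filtered, so the identity descends to $E_r^{*,*}$ and $E^r_{*,*}$ for every $r \geq 1$.

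For uniqueness I would use that the duality $C^*(X) = (C_*(X))^{\vee}$ lifts to a duality between $\mathcal{G}^{\bullet} C^*(X)$ and $\mathcal{G}_{\bullet} C_*(X)$ (see \cite{LP}), which induces an identification $E_r^{p,q} \cong (E^r_{p,q})^{\vee}$ from page one onward, with evaluation as the pairing. In particular the map $E^r_{s-p,t-q} \to (E_r^{s-p,t-q})^{\vee}$ sending $x \mapsto (\psi \mapsto \psi(x))$ is an isomorphism, so any element of $E^r_{s-p,t-q}$ is uniquely determined by its pairings with all $\psi \in E_r^{s-p,t-q}$. Together with the existence part, this gives the uniqueness statement.

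The main technical obstacle is to handle $u^{-1}$ carefully, since $u$ is only a filtered quasi-isomorphism and not literally invertible on chains. The clean way to address this is to work in an auxiliary filtered chain complex quasi-isomorphic to both $\mathcal{G}_{\bullet} C_*(X) \otimes \mathcal{G}_{\bullet} C_*(X)$ and $\mathcal{G}_{\bullet} C_*(X \times X)$ (as is done in Section 5 of \cite{LP}), so that $u$ becomes a genuine isomorphism on a convenient representative; once such a representative is fixed, the identity reduces to the commutativity of $\mathbb{Z}_2$, and the rest is the dualities and bookkeeping already recorded above.
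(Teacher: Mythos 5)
Your overall strategy matches the paper's (evaluate both sides against a decomposition of $u^{-1}(\Delta_* c)$, then get uniqueness from the duality $E_r^{s-p,t-q}=(E^r_{s-p,t-q})^{\vee}$), but the key step of your existence argument fails as justified. You obtain $\psi(\varphi\frown c)=\sum_i \varphi(a_i)\psi(b_i)$ and $(\psi\smile\varphi)(c)=\sum_i\psi(a_i)\varphi(b_i)$, and claim these agree by commutativity of multiplication in $\mathbb{Z}_2$. That is not enough: $\varphi$ and $\psi$ live in different bidegrees, so $\sum_i\varphi(a_i)\psi(b_i)$ only sees the component of $u^{-1}(\Delta_*c)$ lying in $E^r_{p,q}\otimes E^r_{s-p,t-q}$, while $\sum_i\psi(a_i)\varphi(b_i)$ only sees the component lying in $E^r_{s-p,t-q}\otimes E^r_{p,q}$; these are different summands of the K\"unneth decomposition, and commuting the two factors of a product in $\mathbb{Z}_2$ does not relate them. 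What is actually needed is the invariance of $u^{-1}(\Delta_* c)$ under the flip $a\otimes b\mapsto b\otimes a$ --- equivalently, the commutativity of the cup product, which ultimately comes from $\phi\circ\Delta=\Delta$ for the swap $\phi$ on $X\times X$. The paper sidesteps this by first rewriting $(\psi\smile\varphi)(c)=(\varphi\smile\psi)(c)$ using the already-established commutativity of $\smile$ (Proposition 5.20 of \cite{LP}), and only then unwinding the definition, so that both sides become literally $\sum \varphi(a)\psi(b)$. Your argument is repaired by inserting this one invocation.

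A smaller remark: the ``main technical obstacle'' you raise about inverting $u$ does not arise in the paper's argument, because the computation is carried out directly on the pages $E^r$, $r\ge 1$, where $u$ induces a genuine isomorphism by the K\"unneth theorem (Theorem 5.13 of \cite{LP}); no auxiliary chain complex is needed. Your uniqueness argument is fine and agrees with the paper's, which simply cites the finite-dimensionality of $E^r_{s-p,t-q}$ together with the identification $E_r^{s-p,t-q}=(E^r_{s-p,t-q})^{\vee}$ by the evaluation pairing.
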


\begin{proof} If $\displaystyle{u^{-1}(\Delta_*(c)) = \bigoplus_{\alpha + \gamma = s, \beta + \delta = t} a_{\alpha, \beta} \otimes b_{\gamma, \delta}} \in \bigoplus_{\alpha + \gamma = s, \beta + \delta = t} E^r_{\alpha, \beta} \otimes E^r_{\gamma, \delta}$ then 
$$\varphi \frown c = h\left( \varphi \otimes \left(\bigoplus_{\alpha + \gamma = s, \beta + \delta = t} a_{\alpha, \beta} \otimes b_{\gamma, \delta}\right)\right) = \bigoplus_{\alpha + \gamma = s, \beta + \delta = t} \varphi(a_{\alpha, \beta}) \cdot b_{\gamma, \delta}.$$
Therefore, if $\psi \in E_r^{s-p,t-q} = (E^r_{s-p,t-q})^{\vee}$,
$$\psi(\varphi \frown c) = \bigoplus_{\alpha + \gamma = s, \beta + \delta = t} \varphi(a_{\alpha, \beta}) \cdot \psi(b_{\gamma, \delta}).$$ 

On the other hand, since $\left( u^{\vee} \right)^{-1} = \left( u^{-1} \right)^{\vee}$, we have
\begin{eqnarray*}
(\psi \smile \varphi)(c) & = & (\varphi \smile \psi )(c) \\
				&  = & \Delta^* \circ \left( u^{-1} \right)^{\vee} \circ w (\varphi \otimes \psi)(c) \\
			   & = & w (\varphi \otimes \psi) \circ u^{-1} \circ \Delta_* (c) \\
			   & = & \bigoplus_{\alpha + \gamma = s, \beta + \delta = t} \varphi(a_{\alpha, \beta}) \cdot \psi(b_{\gamma, \delta}) \\
			   & = & \psi(\varphi \frown c)
\end{eqnarray*}

The uniqueness comes from the fact that the linear space $E^r_{s-p,t-q}$ is finite-dimensional.
\end{proof}

\begin{theo} \label{prop1equivcap}
Let $r \geq 1$ and let $p,q,p',q',s,t \in \mathbb{Z}$. If $\varphi \in {}^G\!E_r^{p,q}$, $c \in {}^G\!E^r_{s,t}$ and $\psi \in {}^G\!E_r^{p',q'}$, then
\begin{equation} \label{eqform1equivcap}
(\psi \smile \varphi) \frown c = \psi \frown (\varphi \frown c).
\tag{4.9}
\end{equation}
\end{theo}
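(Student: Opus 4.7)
The plan is to proceed exactly as in the proofs of Theorems \ref{comassoccupg} and \ref{comassoc2cupg}: first establish the identity on the non-equivariant weight spectral sequences $E_r$ and $E^r$, then transport it to ${}^G\!E_r$ via the intermediate Hochschild-Serre spectral sequences ${}^p\!E_r$ and ${}^p\!E^r$, using the functoriality of group (co)homology with respect to the diagonal homomorphism $\delta : G \to G \times G$ and the compatibility (Propositions \ref{isolprod}, \ref{prodlhom} and Lemma \ref{lemfI}) of $L_*$ and $L^*$ with tensor products via the bar resolution.

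First I would verify the non-equivariant analogue $(\psi \smile \varphi) \frown c = \psi \frown (\varphi \frown c)$ on $E_r$, $E^r$. This is a short argument from Proposition \ref{propwssformun}: for any test class $\chi \in E_r^{s-p-p',\,t-q-q'}$, pairing gives on one hand
\[
\chi\bigl((\psi \smile \varphi) \frown c\bigr) = \bigl(\chi \smile (\psi \smile \varphi)\bigr)(c),
\]
and on the other hand, applying Proposition \ref{propwssformun} twice,
\[
\chi\bigl(\psi \frown (\varphi \frown c)\bigr) = (\chi \smile \psi)(\varphi \frown c) = \bigl((\chi \smile \psi) \smile \varphi\bigr)(c).
\]
By associativity of the non-equivariant cup product on $E_r$ (Proposition 5.20 of \cite{LP}), these agree, and the uniqueness clause of Proposition \ref{propwssformun} yields the formula in $E^r$.

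Next I would lift to the equivariant level. The key point is that both sides of (\ref{eqform1equivcap}) arise as compositions of the same elementary morphisms at the filtered chain level, namely: the diagonal $\Delta_*$ (and $\Delta^*$), the K\"unneth quasi-isomorphisms $u$, $u^\vee$, $w$, the pairing $h$, the natural isomorphisms $L^G \otimes L^G \xrightarrow{\sim} L^{G \times G}(\cdot \otimes \cdot)$ from Lemma \ref{lemfI} and its homological analogue, and the morphism $T$ associated to $\delta$ from Proposition \ref{functgroup}. Following the template of diagram (\ref{diagassocg}), I would assemble a large commutative diagram at the level of the ${}^p\!E_1 \otimes {}^{p'}\!E_1 \otimes {}^{p''}\!E^1$ page that has the non-equivariant identity established above as its ``geometric core'', and whose outer commutativity is ensured by: (i) functoriality of group cohomology/homology applied to $\Delta \times \mathrm{id}$ and $\mathrm{id} \times \Delta$ (as in Lemma \ref{functprodcross}); (ii) uniqueness up to homotopy of augmentation-preserving $G$-chain maps $F \to F \otimes F \otimes F$, which identifies the compositions of $T$'s arising from $g \mapsto (g,g,g)$ via either $(g,g) \mapsto (g,g,g)$ or $(g,g) \mapsto (g,g,g)$; (iii) the (co)associativity of the K\"unneth morphisms $K$ on group (co)homology.

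The main obstacle I expect is the bookkeeping: the cap product mixes a cochain factor with two chain factors, so unlike the cup product case the relevant ``triple tensor'' lives in a hybrid category (a cochain $G$-complex tensored with a chain $G$-complex tensored with a chain $G$-complex), and one must carefully check that the natural isomorphism $L^*_G(-)\otimes L^G_*(-) \otimes L^G_*(-) \xrightarrow{\sim} L^{G\times G\times G}(-\otimes -\otimes -)$ obtained iteratively from Lemma \ref{lemfI} is independent of the order of bracketing, and that it intertwines the diagonal $T_0$ of Theorem \ref{comassoc2cupg} with the two factorisations $T_1 \circ T$ and $T_2 \circ T$. Once this naturality is in hand the passage from ${}^*\!E_2$ to ${}^*\!E_r$ and finally to ${}^G\!E_r$ proceeds by the same spectral-sequence argument as at the end of the proof of Theorem \ref{comassoc2cupg}, since all the morphisms are induced from the filtered chain level.
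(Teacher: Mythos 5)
Your proposal follows essentially the same route as the paper's proof: the paper likewise first derives the identity on the non-equivariant weight spectral sequences by pairing against a test functional via Proposition \ref{propwssformun} and invoking associativity of the cup product, then encodes it in a chain-level commutative diagram, applies group (co)homology to obtain the large commutative diagram on the Hochschild--Serre pages ${}^*\!E_2$, ${}^*\!E^2$ (filled in exactly with the ingredients you list: functoriality, the K\"unneth isomorphisms $K$, and the homotopy-uniqueness of the chain maps $\tau_0,\tau_1,\tau_2$ over the diagonal homomorphisms), and finally passes to ${}^G\!E_r$ for all $r\geq 1$ because every morphism involved is defined at the filtered chain level. The only blemish is the typo in your item (ii), where the two factorisations of $g\mapsto(g,g,g)$ should read $(g,g')\mapsto(g,g,g')$ and $(g,g')\mapsto(g,g',g')$; otherwise the plan matches the paper.
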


\begin{proof} We first show formula (\ref{eqform1equivcap}) on the weight spectral sequences, using proposition \ref{propwssformun} : if $\varphi_1 \in E_r^{p_1,q_1}$ and $\varphi_2 \in E_r^{p_2,q_2}$, then, for all $\psi \in E_r^{s-(p_1+p_2),t-(q_1+q_2)}$, we have
\begin{eqnarray*}
\psi(\varphi_1 \frown (\varphi_2 \frown c)) & = & (\psi \smile \varphi_1)  (\varphi_2 \frown c) \\
							        & = & ((\psi \smile \varphi_1) \smile \varphi_2)(c) \\
							        & = & (\psi \smile (\varphi_1 \smile \varphi_2))(c) \\
							        & = & \psi((\varphi_1 \smile \varphi_2) \frown c).
\end{eqnarray*}

We then express formula (\ref{eqform1equivcap}) on the non-equivariant weight spectral sequences with the following commutative diagram
\begin{equation*} \xymatrix{ 
& E^{p',q'}_r \otimes E^{p,q}_r \otimes E^r_{s,t} \ar[ld]_{\left(u^{\vee}\right)^{-1} \circ w \otimes id~~} \ar[rd]^{~id \otimes id \otimes \Delta_*} & \\
E^{p+p',q+q'}_r(X \times X) \otimes E^r_{s,t}  \ar[d]_{\Delta^* \otimes id} & & E^{p',q'}_r \otimes E^{p,q}_r \otimes E^r_{s,t}(X \times X) \ar[d]^{id \otimes h \circ (id \otimes u^{-1})} \\
E^{p+p',q+q'}_r \otimes E^r_{s,t} \ar[d]_{id \otimes \Delta_*} & & E^{p',q'}_r \otimes E^r_{s-p,t-q} \ar[d]^{id \otimes \Delta_*} \\
E^{p+p',q+q'}_r \otimes E^r_{s,t}(X \times X) \ar[rd]_{h \circ (id \otimes u^{-1})~~} & & E^{p',q'}_r \otimes E^r_{s-p,t-q}(X \times X) \ar[ld]^{~~h \circ (id \otimes u^{-1})} \\
& E^r_{s-(p+p'), t-(q+q')} & 
}
\end{equation*}

The next step consists in showing that formula (\ref{eqform1equivcap}) is true on the Hochschild-Serre spectral sequences ${}^*\!E_2$ and ${}^*\!E^2$, that is we are going to prove that the following diagram is commutative~: 
 \begin{equation} \label{equivcapformcup} {\tiny \hspace{-1.5cm} \xymatrix{
& H_G^{\mu'}\left(E^{p',q'}_1\right) \otimes H_G^{\mu}\left(E^{p,q}_1\right) \otimes H_G^{-\rho}\left(E^1_{s,t}\right) \ar[ld]_{K \otimes id} \ar[rd]^{id \otimes id \otimes \Delta_*} & \\
H_{G \times G}^{\mu'+\mu}\left(E^{p',q'}_1 \otimes E^{p,q}_1\right) \otimes H_G^{-\rho}\left(E^1_{s,t}\right) \ar[d]_{\left(u^{\vee}\right)^{-1} \circ w \otimes id} & & H_G^{\mu'}\left(E^{p',q'}_1\right) \otimes H_G^{\mu}\left(E^{p,q}_1\right) \otimes H_G^{-\rho}\left(E^1_{s,t}(X \times X)\right) \ar[d]^{id \otimes K} \\
H_{G \times G}^{\mu'+\mu}\left(E^{p'+p,q'+q}_1 (X \times X)\right) \otimes H_G^{-\rho}\left(E^1_{s,t}\right) \ar[d]_{T \otimes id} & &  H_G^{\mu'}\left(E^{p',q'}_1\right) \otimes H_{G \times G}^{- \rho + \mu} \left(E^{p,q}_1 \otimes E^1_{s,t}(X \times X)\right) \ar[d]^{id \otimes T} \\
H_{G}^{\mu'+\mu}\left(E^{p'+p,q'+q}_1 (X \times X)\right) \otimes H_G^{-\rho}\left(E^1_{s,t}\right) \ar[d]_{\Delta^* \otimes id} & &  H_G^{\mu'}\left(E^{p',q'}_1\right) \otimes H_{G}^{-\rho + \mu} \left(E^{p,q}_1 \otimes E^1_{s,t}(X \times X)\right) \ar[d]^{id \otimes h \circ (id \otimes u^{-1})} \\
H_{G}^{\mu'+\mu}\left(E^{p'+p,q'+q}_1\right) \otimes H_G^{-\rho}\left(E^1_{s,t}\right) \ar[d]_{id \otimes \Delta_*} & &  H_G^{\mu'}\left(E^{p',q'}_1\right) \otimes H_{G}^{-\rho + \mu} \left(E_{s-p,t-q}^1\right) \ar[d]^{id \otimes \Delta_*} \\
H_{G}^{\mu'+\mu}\left(E^{p'+p,q'+q}_1\right) \otimes H_G^{-\rho}\left(E^1_{s,t}(X \times X)\right) \ar[d]_K & &  H_G^{\mu'}\left(E^{p',q'}_1\right) \otimes H_{G}^{-\rho + \mu} \left(E_{s-p,t-q}^1(X \times X)\right) \ar[d]^K \\
H_{G \times G}^{- \rho + \mu'+\mu}\left(E^{p'+p,q'+q}_1 \otimes E^1_{s,t}(X \times X)\right) \ar[d]_T & &  H_{G \times G}^{-\rho + \mu + \mu'}\left(E^{p',q'}_1 \otimes E_{s-p,t-q}^1(X \times X)\right) \ar[d]^T \\
H_{G}^{-\rho + \mu'+\mu}\left(E^{p'+p,q'+q}_1 \otimes E^1_{s,t}(X \times X)\right) \ar[rd]_{h \circ (id \otimes u^{-1})} & &  H_{G}^{-\rho + \mu + \mu'}\left(E^{p',q'}_1 \otimes E_{s-p,t-q}^1(X \times X)\right) \ar[ld]^{h \circ (id \otimes u^{-1})} \\
& H_{G}^{-(\rho - (\mu'+\mu))}\left(E_{s-(p'+p),t-(q'+q)}^1\right) & 
}}
\tag{4.10}
\end{equation}

Similarly to what we did to prove the properties of the cup product on ${}^*\!E_2$, we show the commutativity of previous diagram (\ref{equivcapformcup}) by filling it with commutative diagrams. We do in the following way. The left and right parts of (\ref{equivcapformcup}) become respectively :

\begin{landscape} 
{\tiny 
\begin{equation*} \xymatrix{
&&& H_G^{\mu'}\left(E_1^{p',q'}\right) \otimes H_G^{\mu}\left(E_1^{p,q}\right) \otimes H_G^{-\rho}\left(E^1_{s,t}\right) \ar[dl]_{K \otimes id} \\
& H_{G \times G}^{\mu'+\mu}\left(E^{p'+p,q'+q}_1(X \times X)\right) \otimes H_G^{-\rho}\left(E^1_{s,t}\right) \ar[d]_{T \otimes id} & H_{G \times G}^{\mu'+\mu}\left(E^1_{p',q'} \otimes E^1_{p,q}\right) \otimes H_G^{-\rho}\left(E^1_{s,t}\right) \ar[l]_{\raise0.3cm\hbox{$(u^{\vee})^{-1} \circ w \otimes id$}} \ar[r]^K \ar[d]^{T \otimes id} & H_{G \times G \times G}^{-\rho + \mu' + \mu} \left(E_1^{p',q'} \otimes E_1^{p,q} \otimes E^1_{s,t}\right) \ar[dd]^{T_0} \ar[ddl]^{T_1} \\
H_G^{\mu' + \mu}\left(E^{p'+p,q'+q}_1\right) \otimes H_G^{-\rho}\left(E^1_{s,t}\right) \ar[d]_{id \otimes \Delta_*} & H_G^{\mu' + \mu}\left(E^{p'+p,q'+q}_1(X \times X)\right) \otimes H_G^{-\rho}\left(E^1_{s,t}\right) \ar[l]_{\raise0.3cm\hbox{$\Delta^* \otimes id$}} \ar[d]_K \ar[ld]_{\Delta^* \otimes \Delta_*} & H_{G}^{\mu'+\mu}\left(E_1^{p',q'} \otimes E_1^{p,q}\right) \otimes H_G^{-\rho}\left(E^1_{s,t}\right) \ar[l]_{\raise0.3cm\hbox{$(u^{\vee})^{-1} \circ w \otimes id$}} \ar[d]^K & \\
H_{G}^{\mu'+\mu}\left(E^{p'+p,q'+q}_1\right) \otimes H_G^{-\rho}\left(E^1_{s,t}(X \times X) \right)  \ar[d]_K & H_{G \times G}^{-\rho + \mu' + \mu}\left(E^{p'+p,q'+q}_1(X \times X) \otimes E^1_{s,t}\right) \ar[ld]_{\Delta^* \otimes \Delta_*} \ar[rrd]^T \ar[d]_{\Delta^* \otimes id} & H_{G \times G}^{-\rho + \mu'+\mu }\left(E_1^{p',q'} \otimes E_1^{p,q} \otimes E^1_{s,t}\right) \ar[l]_{\raise0.3cm\hbox{$(u^{\vee})^{-1} \circ w \otimes id$}} \ar[r]^T & H_{G}^{-\rho + \mu'+\mu }\left(E_1^{p',q'} \otimes E_1^{p,q} \otimes E^1_{s,t}\right) \ar[d]^{(u^{\vee})^{-1} \circ w \otimes id} \\
H_{G \times G}^{-\rho+\mu'+\mu}\left( E^{p'+p,q'+q}_1 \otimes E^1_{s,t}(X \times X) \right) \ar[d]_T & H_{G \times G}^{-\rho+\mu'+\mu}\left( E^{p'+p,q'+q}_1 \otimes E^1_{s,t} \right) \ar[l]_{id \otimes \Delta_*} \ar[r]^T & H_{G}^{-\rho+\mu'+\mu}\left( E^{p'+p,q'+q}_1 \otimes E^1_{s,t} \right) \ar[lld]_{id \otimes \Delta_*} & H_{G}^{-\rho+\mu'+\mu}\left( E^{p'+p,q'+q}_1(X \times X) \otimes E^1_{s,t} \right) \ar[l]_{\Delta^* \otimes id} \\
H_{G}^{-\rho+\mu'+\mu}\left( E^{p'+p,q'+q}_1 \otimes E^1_{s,t}(X \times X) \right) \ar[rrr]^{h \circ (id \otimes u^{-1})} & & & H_{G}^{-(\rho - (\mu'+\mu))}\left(E_{s-(p'+p),t-(q'+q)}^1\right)
}
\end{equation*}
}
\end{landscape}

\begin{landscape} 
{\tiny 
\begin{equation*} \xymatrix{
H_G^{\mu'}\left(E_1^{p',q'}\right) \otimes H_G^{\mu}\left(E_1^{p,q}\right) \otimes H_G^{-\rho}\left(E^1_{s,t}\right) \ar[rr]^{id \otimes id \otimes \Delta_*} \ar[rd]^{id \otimes K} & & H_G^{\mu'}\left(E_1^{p',q'}\right) \otimes H_G^{\mu}\left(E_1^{p,q}\right) \otimes H_G^{-\rho}\left(E^1_{s,t}(X \times X) \right) \ar[d]^{id \otimes K} &  \\
H_{G \times G \times G}^{-\rho + \mu' + \mu} \left(E_1^{p',q'} \otimes E_1^{p,q} \otimes E^1_{s,t}\right) \ar[dd]_{T_0} \ar[rdd]_{T_2} & H_G^{\mu'}\left(E_1^{p',q'} \right) \otimes H_{G \times G}^{-\rho + \mu}\left(E_1^{p,q} \otimes E^1_{s,t}\right) \ar[l]_K \ar[r]^{\raise0.3cm\hbox{$id \otimes (id \otimes \Delta_*)$}} \ar[d]_{id \otimes T} & H_G^{\mu'}\left(E_1^{p',q'} \right) \otimes H_{G \times G}^{-\rho + \mu}\left(E_1^{p,q} \otimes E^1_{s,t}(X \times X)\right) \ar[d]^{id \otimes T} & \\
& H_G^{\mu'}\left(E_1^{p',q'} \right) \otimes H_{G}^{-\rho + \mu}\left(E_1^{p,q} \otimes E^1_{s,t}\right) \ar[r]^{\raise0.3cm\hbox{$id \otimes (id \otimes \Delta_*)$}} \ar[d]_K & 
H_G^{\mu'}\left(E_1^{p',q'} \right) \otimes H_{G}^{-\rho + \mu}\left(E_1^{p,q} \otimes E^1_{s,t}(X \times X)\right) \ar[r]^{\raise0.3cm\hbox{$id \otimes h(id \otimes u^{-1})$}} \ar[d]^K & H^{\mu'}_G\left(E_1^{p',q'}\right) \otimes H_G^{-\rho + \mu} \left(E^1_{s-p,t-q}\right) \ar[dddd]^{id \otimes \Delta_*}  \ar[lddd]^K \\
H_{G}^{-\rho + \mu' + \mu} \left(E_1^{p',q'} \otimes E_1^{p,q} \otimes E^1_{s,t}\right) \ar[d]_{id \otimes id \otimes \Delta_*} & H_{G \times G}^{-\rho + \mu' + \mu} \left(E_1^{p',q'} \otimes E_1^{p,q} \otimes E^1_{s,t}\right) \ar[r]^{\raise0.3cm\hbox{$id \otimes id \otimes \Delta_*$}} \ar[l]_T & H_{G \times G}^{-\rho + \mu' + \mu} \left(E_1^{p',q'} \otimes E_1^{p,q} \otimes E^1_{s,t}(X \times X)\right) \ar[dd]^{id \otimes h(id \otimes u^{-1})} \ar[lld]_{T} &  \\
H_{G}^{-\rho + \mu' + \mu} \left(E_1^{p',q'} \otimes E_1^{p,q} \otimes E^1_{s,t}(X \times X)\right) \ar[d]_{id \otimes h(id \otimes u^{-1})} & &  &  
\\
H_G^{-\rho + \mu' + \mu}\left(E^{p',q'}_1 \otimes E^1_{s-p,t-q}\right) \ar[d]_{id \otimes \Delta_*} & & H_{G \times G}^{-\rho + \mu' + \mu}\left(E^{p',q'}_1 \otimes E^1_{s-p,t-q}\right) \ar[d]^{id \otimes \Delta_*} \ar[ll]_{T} & \\
H_{G}^{-\rho + \mu' + \mu}\left(E^{p',q'}_1 \otimes E^1_{s-p,t-q}(X \times X) \right) \ar[d]_{h(id \otimes u^{-1})} & &H_{G \times G}^{-\rho + \mu' + \mu}\left(E^{p',q'}_1 \otimes E^1_{s-p,t-q}(X \times X) \right) \ar[ll]_{T}   & H^{\mu'}_G\left(E_1^{p',q'}\right) \otimes H_G^{-\rho + \mu} \left(E^1_{s-p,t-q}(X \times X)\right) \ar[l]^{K} \\
H_{G}^{-\rho + \mu' + \mu}\left(E^1_{s-p-p',t-q-q'}\right) &&&
} 
\end{equation*}
}
\end{landscape}

Finally, since the morphisms in the above diagrams are defined on the filtered chain level, the formula (\ref{equivcapformcup}) is also true on the induced spectral sequences ${}^*\!E_r$ and ${}^*\!E^r$ for $r \geq 2$, and ${}^G\!E_r$ and ${}^G\!E^r$ for $r \geq 1$. 

\end{proof}

\begin{theo} \label{prop2equivcap} Let $Y$ be a real algebraic $G$-variety and $f : X \rightarrow Y$ an equivariant morphism. Let $r \geq 1$ and let $p,q,s,t \in \mathbb{Z}$. If $\varphi \in {}^G\!E_r^{p,q}(Y)$ and $c \in {}^G\!E^r_{s,t}(X)$, then
\begin{equation} \label{equivformcapfunct} \varphi \frown f_*(c) = f_* (f^*(\varphi) \frown c). 
\tag{4.11}
\end{equation}
\end{theo}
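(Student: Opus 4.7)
The plan is to mimic the proof of Theorem \ref{prop1equivcap}: first establish formula (\ref{equivformcapfunct}) at the level of the non-equivariant weight spectral sequences (from page $r \geq 1$), express it as the outer boundary of a commutative diagram built from $\Delta_*$, $u^{-1}$, $h$, $f_*$ and $f^*$, then apply the functor ${}_I\!L_*$ together with the K\"unneth isomorphism $K$ of Proposition \ref{isolprod}, the morphism $T$ of Proposition \ref{functgroup} induced by $\delta : G \to G \times G$, and the morphisms induced by functoriality with respect to $f$ and $\Delta$, in order to lift the identity first to the Hochschild-Serre spectral sequences ${}^p\!E_2$ and ${}^p\!E^2$, then to ${}^G\!E_r$ and ${}^G\!E^r$ for $r \geq 1$ since all intermediate arrows originate at the filtered chain level.

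For the non-equivariant projection formula, I would argue by uniqueness using Proposition \ref{propwssformun}. Given $\varphi \in E_r^{p,q}(Y)$, $c \in E^r_{s,t}(X)$ and any test cochain $\psi \in E_r^{s-p,t-q}(Y)$, I compute
\begin{align*}
\psi(\varphi \frown f_*(c)) &= (\psi \smile \varphi)(f_*(c)) = f^*(\psi \smile \varphi)(c) \\
&= (f^*(\psi) \smile f^*(\varphi))(c) = f^*(\psi)(f^*(\varphi) \frown c) = \psi(f_*(f^*(\varphi) \frown c)),
\end{align*}
using Proposition \ref{propwssformun} (applied in turn on $Y$ and on $X$), the adjunction between $f_*$ and $f^*$ with respect to the pairing $E_r^{\bullet,\bullet} \otimes E^r_{\bullet,\bullet} \to \mathbb{Z}_2$, and the non-equivariant version of Theorem \ref{functcupg}. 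Since $E^r_{s-p,t-q}(Y)$ is finite-dimensional, the identity $\varphi \frown f_*(c) = f_*(f^*(\varphi) \frown c)$ holds on the non-equivariant weight spectral sequences.

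Next I would realize this identity as a commutative diagram on $E_r(Y)$ and $E^r(X)$ by assembling the naturality square $\Delta_* \circ f_* = (f \times f)_* \circ \Delta_*$, the naturality of $u$ along $f \times f$ (the homological analog of Lemma \ref{functprodcross}), and the filtered chain identity $f_* \circ h \circ (f^* \otimes \mathrm{id}) = h \circ (\mathrm{id} \otimes (f \times f)_*)$, which comes from the definition $h(\varphi \otimes (a \otimes b)) = \varphi(a) \cdot b$ together with $f^*(\varphi)(a) = \varphi(f_*(a))$. Applying the functor ${}_I\!L_*$ to this diagram and inserting commuting squares that express the naturality of $K$ and $T$ with respect to $f^* \otimes \mathrm{id}$, $\mathrm{id} \otimes f_*$ and $\delta$ produces a large commutative diagram on ${}^p\!E_2$ and ${}^p\!E^2$ whose outer boundary is exactly formula (\ref{equivformcapfunct}) at the Hochschild-Serre level.

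The main obstacle, as in the earlier theorems of this section, is the bookkeeping of this diagram. The novelty here compared with Theorem \ref{prop1equivcap} is the simultaneous presence of the contravariant $f^*$ and the covariant $f_*$, so one has to verify carefully that the square relating $K$ to $f^* \otimes f_*$ commutes (which reduces to the naturality statement in Proposition \ref{prophomtens}) and that the transfer $T$ commutes with the map induced by $f_*$ (again by naturality of the $\Hom$-tensor isomorphism). Once the page-two diagram is assembled, formula (\ref{equivformcapfunct}) on ${}^G\!E_r$ and ${}^G\!E^r$ for $r \geq 1$ follows because every arrow is induced by a morphism defined at the filtered chain level and is therefore compatible with the differentials of the spectral sequences.
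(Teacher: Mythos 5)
Your proposal follows essentially the same route as the paper: the non-equivariant projection formula is derived by testing against $\psi \in E_r^{s-p,t-q}(Y)$ via Proposition \ref{propwssformun}, the adjunction between $f_*$ and $f^*$, and the functoriality of the cup product, exactly as in the paper's computation; the identity is then encoded in a commutative diagram of chain-level morphisms ($\Delta_*$, $u^{-1}$, $h$, $f_*$, $f^*$) to which group cohomology is applied, with the maps $K$ and $T$ inserted, yielding the formula on ${}^p\!E_2$, ${}^p\!E^2$ and hence on ${}^G\!E_r$, ${}^G\!E^r$ for $r \geq 1$. The argument is correct and matches the paper's proof in both structure and detail.
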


\begin{proof} We first prove that formula (\ref{equivformcapfunct}) on the non-equivariant weight spectral sequences : suppose $\varphi \in E_r^{p,q}(Y)$ and $c \in E^r_{s,t}(X)$, then, for any $\psi \in \in E_r^{s-p,t-q}(Y)$, we have
\begin{eqnarray*} 
\psi\left(\varphi \frown f_*(c)\right) & = & \left(\psi \smile \varphi\right)\left(f_*(c)\right) \\
				       & = & f^*\left(\psi \smile \varphi\right)(c) \\
				       & = & \left(f^*(\psi) \smile f^*(\varphi)\right)(c) \\
				       & = & f^*(\psi)\left(f^*(\varphi) \frown c\right) \\
				       & = & \psi \left(f_*\left(f^*(\varphi) \frown c\right)\right).
\end{eqnarray*}
We then express formula (\ref{equivformcapfunct}) with the commutative diagram

\begin{equation*} \xymatrix{
& E^{p,q}_r(Y) \otimes E^r_{s,t}(X) \ar[ld]_{id \otimes f_*} \ar[rd]^{f^* \otimes id} & \\
E^{p,q}_r(Y) \otimes E^r_{s,t}(Y) \ar[d]_{id \otimes \Delta_*} & & E^{p,q}_r(X) \otimes E^r_{s,t}(X) \ar[d]^{id \otimes \Delta_*} \\
E^{p,q}_r(Y) \otimes E^r_{s,t}(Y \times Y)  \ar[rdd]_{h(id \otimes u^{-1})} & & E^{p,q}_r(X) \otimes E^r_{s,t}(X \times X) \ar[d]^{h(id \otimes u^{-1})}\\
& & E^r_{s-p,t-q}(X) \ar[ld]^{f_*}  \\
& E^r_{s-p,t-q}(Y) &
}
\end{equation*}

Finally, we prove the formula (\ref{equivformcapfunct}) on ${}^*\!E_2$ and ${}^*\!E^2$, showing that the following diagram is commutative :
\begin{equation*} \hspace{-1.5cm} \xymatrix{
& H_G^{\mu}\left(E^{p,q}_1(Y)\right) \otimes H_G^{-\rho}\left(E^1_{s,t}(X)\right) \ar[ld]_{id \otimes f_*} \ar[rd]^{f^* \otimes id} & \\
H_G^{\mu}\left(E^{p,q}_1(Y)\right) \otimes H_G^{-\rho}\left(E^1_{s,t}(Y)\right) \ar[d]_{id \otimes \Delta_*} & & H_G^{\mu}\left(E^{p,q}_1(X)\right) \otimes H_G^{-\rho}\left(E^1_{s,t}(X)\right) \ar[d]^{id \otimes \Delta_*} \\
H_G^{\mu}\left(E^{p,q}_1(Y)\right) \otimes H_G^{-\rho}\left(E^1_{s,t}(Y \times Y)\right) \ar[d]_{K}  & & H_G^{\mu}\left(E^{p,q}_1(X)\right) \otimes H_G^{-\rho}\left(E^1_{s,t}(X \times X)\right) \ar[d]^K \\
H^{-\rho + \mu}_{G \times G}\left(E^{p,q}_1(Y) \otimes E^1_{s,t}(Y \times Y)\right) \ar[d]_{T} & & H^{-\rho + \mu}_{G \times G}\left(E^{p,q}_1(X) \otimes E^1_{s,t}(X \times X)\right) \ar[d]^{T} \\
H^{-\rho + \mu}_{G}\left(E^{p,q}_1(Y) \otimes E^1_{s,t}(Y \times Y)\right) \ar[rdd]_{h(id \otimes u^{-1})} & & H^{-\rho + \mu}_{G}\left(E^{p,q}_1(X) \otimes E^1_{s,t}(X \times X)\right) \ar[d]^{h(id \otimes u^{-1})}  \\
& & H^{-\rho + \mu}_G\left(E^1_{s-p,t-q}(X)\right) \ar[ld]^{f_*}  \\
& H^{-\rho + \mu}_G\left(E^1_{s-p,t-q}(Y)\right) &
}
\end{equation*}
To this end, we fill it by commutative diagrams as follows. Since the involved morphisms are defined on the filtered chain level, the formula (\ref{equivformcapfunct}) is induced on ${}^*\!E_r$ and ${}^*\!E^r$ for $r \geq 2$, and on ${}^G\!E_r$ and ${}^G\!E^r$ for $r \geq 1$.
\begin{landscape}
{\tiny
\begin{equation*} \xymatrix{
& & H_G^{\mu}\left(E^{p,q}_1(Y)\right) \otimes H_G^{-\rho}\left(E^1_{s,t}(X)\right) \ar[lld]_{id \otimes f_*} \ar[rrd]^{f^* \otimes id} \ar[d]_K & & \\
H_G^{\mu}\left(E^{p,q}_1(Y)\right) \otimes H_G^{-\rho}\left(E^1_{s,t}(Y)\right) \ar[d]_{id \otimes \Delta_*} & & H^{-\rho + \mu}_{G \times G}\left(E^{p,q}_1(Y) \otimes E^1_{s,t}(X) \right) \ar[d]_T \ar[ldd]_{id \otimes f_*}  \ar[rdd]^{f^* \otimes id}  & & H_G^{\mu}\left(E^{p,q}_1(X)\right) \otimes H_G^{-\rho}\left(E^1_{s,t}(X)\right) \ar[d]^{id \otimes \Delta_*} \\
H_G^{\mu}\left(E^{p,q}_1(Y)\right) \otimes H_G^{-\rho}\left(E^1_{s,t}(Y \times Y)\right) \ar[d]_{K}  & & H^{-\rho + \mu}_{G}\left(E^{p,q}_1(Y) \otimes E^1_{s,t}(X) \right) \ar[ldd]_{id \otimes f_*}  \ar[rdd]^{f^* \otimes id} & & H_G^{\mu}\left(E^{p,q}_1(X)\right) \otimes H_G^{-\rho}\left(E^1_{s,t}(X \times X)\right) \ar[d]^K \\
H^{-\rho + \mu}_{G \times G}\left(E^{p,q}_1(Y) \otimes E^1_{s,t}(Y \times Y)\right) \ar[d]_{T} & H^{-\rho + \mu}_{G \times G}\left(E^{p,q}_1(Y) \otimes E^1_{s,t}(Y)\right)  \ar[l]_{id \otimes \Delta_*} \ar[d]_T & & H^{-\rho + \mu}_{G \times G}\left(E^{p,q}_1(X) \otimes E^1_{s,t}(X)\right)  \ar[r]^{id \otimes \Delta_*} \ar[d]^T & H^{-\rho + \mu}_{G \times G}\left(E^{p,q}_1(X) \otimes E^1_{s,t}(X \times X)\right) \ar[d]^{T} \\
H^{-\rho + \mu}_{G}\left(E^{p,q}_1(Y) \otimes E^1_{s,t}(Y \times Y)\right) \ar[rrdd]_{h(id \otimes u^{-1})} & H^{-\rho + \mu}_{G}\left(E^{p,q}_1(Y) \otimes E^1_{s,t}(Y)\right) \ar[l]_{id \otimes \Delta_*} & & H^{-\rho + \mu}_{G}\left(E^{p,q}_1(X) \otimes E^1_{s,t}(X)\right) \ar[r]^{id \otimes \Delta_*} & H^{-\rho + \mu}_{G}\left(E^{p,q}_1(X) \otimes E^1_{s,t}(X \times X)\right) \ar[d]^{h(id \otimes u^{-1})}  \\
& & & & H^{-\rho + \mu}_G\left(E^1_{s-p,t-q}(X)\right) \ar[lld]^{f_*}  \\
& & H^{-\rho + \mu}_G\left(E^1_{s-p,t-q}(Y)\right) & &
}
\end{equation*} 
}
\end{landscape}

\end{proof}

\begin{rem} Let $X$ be a real algebraic $G$-variety of dimension $d$. The semialgebraic chain $[X]$ belongs to $\mathcal{G}_{-d} C_d(X)$. Notice that the complex $\mathcal{G}_{-d} C_*(X)$ is concentrated in degree $d$ and therefore, we have
\begin{itemize}
	\item $\Lambda_{-d} C_k^G(X) = Hom_G(F_{-d+k}, \mathcal{G}_{-d} C_d(X))$
	\item ${}^{d}\!E^{\alpha, \beta}_2 = {}^{d}\!E^{\alpha, \beta}_{\infty} = \begin{cases} H^{-\alpha}(G, \mathcal{G}_{-d} C_d(X)) & \mbox{ if $\beta = 0$,} \\ 0 & \mbox{ otherwise,} \end{cases}$
	\item ${}^G\!\widetilde{E}_2^{p,d} = H^{-p}(G, \mathcal{G}_{-d} C_d(X))$.
\end{itemize}

In particular, the semialgebraic chain $[X]$ can be considered as a class of ${}^{d}\!E^{0, 0}_2 = {}^G\!\widetilde{E}_2^{0,d} = {}^G\!E_1^{-d,2d} = {}^G\!E_{\infty}^{-d,2d} = (\mathcal{G}_{-d} C_d(X))^G$ and we can consider the equivariant cap product with $[X]$ 
$${}^p\!D : {}^p\!E^{\alpha, \beta}_r \rightarrow {}^{d-p}\!E^r_{-\alpha,-\beta}$$
on ${}^*\!E_r$, for $r \geq 2$, 
$${}^G\!D : {}^G\!E_r^{p,q} \rightarrow {}^G\!E^r_{-d-p,2 d-q}$$
on ${}^G\!E_r$, for $r \geq 1$, and
$$D_G : \Omega^p H^k(X ; G) \rightarrow \Omega_{-p-n} H_{n-k}(X ; G)$$
on $H^*(X ; G)$.
\\

When $X$ is compact nonsingular, 
$$\widetilde{E}_2^{p,q} = \widetilde{E}_{\infty}^{p,q} = \begin{cases} H^q(X) & \mbox{ if $q = 0$,} \\ 0 & \mbox{ otherwise,} \end{cases}$$
and the spectral sequences ${}^*_I\!E$ degenerate at page two, the reindexed equivariant weight spectral sequences of $X$ being, from page $r =2$, the Hochschild-Serre spectral sequences associated to $X$ and $G$ (remark \ref{remaddcohom} and Proposition 3.23 of \cite{Pri-EWF}). Furthermore, the Poincar\'e duality isomorphism
$$D : H^k(X) \rightarrow H_{n-k}(X),$$
which is induced by morphisms defined on the (filtered) chain level (see section 5.6 of \cite{LP}), induces an isomorphism of Hochschild-Serre spectral sequences from page two :
$$D_G: {}^G\!\widetilde{E}_2^{p,q} = H^{p}(G , H^q(X)) \rightarrow H^p(G, H_{d-q}(X)) = {}^G\!\widetilde{E}^2_{-p,d-q}.$$

This isomorphism of spectral sequences then induces an equivariant Poincar\'e duality isomorphism on the equivariant cohomology and homology of $X$
$$D_G : \Omega^p H^k(X ; G) \rightarrow \Omega_{-p-n} H_{n-k}(X ; G)$$
(here, the equivariant weight filtration $\Omega$ coincides with the filtration induced by the Hochschild-Serre spectral sequence), which coincides with the equivariant Poincar\'e duality of \cite{VH} Theorem 4.2 (see also Remark 5.3).
\end{rem}

 \vspace{0.5cm}
Fabien PRIZIAC
\\
Institut de Math\'ematiques de Marseille
\\
(UMR 7373 du CNRS)
\\
Aix-Marseille Universit\'e
\\
39, rue Fr\'ed\'eric Joliot Curie
\\
13453 Marseille Cedex 13, France
\\
fabien.priziac@univ-amu.fr
\\

Aix Marseille Univ, CNRS, Centrale Marseille, I2M, Marseille, France

\end{document}